\documentclass{amsart}
\usepackage{amsmath}
\usepackage{amsfonts}
\usepackage{amsthm}
\usepackage{amssymb}
\usepackage{enumerate}
\usepackage{cite}
\theoremstyle{plain}
\newtheorem{thm}{Theorem}[section]
\newtheorem{prop}[thm]{Property}
\newtheorem{proposition}[thm]{Proposition}
\newtheorem{exam}[thm]{Example}
\newtheorem{cor}[thm]{Corollary}
\newtheorem{lemma}[thm]{Lemma}
\theoremstyle{definition}
\newtheorem{defn}[thm]{Definition}
\theoremstyle{remark}
\newtheorem{rmk}[thm]{Remark}

\makeatletter

\newcommand{\Rmnum}[1]{\expandafter\@slowromancap\romannumeral #1@}
\makeatother

\thanks{The first author is supported by NSF of China No.11925107, No.11671057 and No.11688101.}
\begin{document}

\title{Localization of the Kobayashi metric and applications}
\author{Jinsong Liu \textsuperscript{1,2} \:$\&$ Hongyu Wang \textsuperscript{1,2}}

\address{$1.$ HLM, Academy of Mathematics and Systems Science,
Chinese Academy of Sciences, Beijing, 100190, China}
\address{ $2.$ School of
Mathematical Sciences, University of Chinese Academy of Sciences,
Beijing, 100049, China} \email{liujsong@math.ac.cn, \: wanghongyu16@mails.ucas.ac.cn}

\begin{abstract} {In this paper we introduce a new class of domains --- log-type convex domains, which have no boundary regularity assumptions. Then we will localize the Kobayashi metric in log-type convex subdomains. As an application, we prove a local version of continuous extension of rough isometric maps between two bounded domains with log-type convex Dini-smooth boundary points. Moreover we prove that the Teichm\"uller space $\mathcal T_{g,n}$ is not biholomorphic to any bounded pseudoconvex domain in $\mathbb C^{3g-3+n}$ which is locally log-type convex near some boundary point.}
\end{abstract}

\maketitle

\section{\noindent{{\bf Introduction}}}\label{In}
\noindent In the complex plane $\mathbb{C}$, if domains $\Omega_1$ and $\Omega_2$ are bounded by closed Jordan curves, then every biholomorphic map $f:\Omega_1\rightarrow \Omega_2$ extends to a homeomorphism of $\overline{D}_1$ onto $\overline{D}_2$. In $\mathbb{C}^n (n\geq 2)$ the problem is more interesting and difficult. If $\Omega_1, \: \Omega_2$ are bounded pseudoconvex domains with
$$\frac{1}{C}\delta_{\Omega_1}^{\frac{1}{\nu_1}}(z)\leq\delta_{\Omega_2}(f(z))\leq C\delta_{\Omega_1}^{\nu_1}(z), \:\:\: \forall z\in\Omega_1,$$ and the {\it Kobayashi metric} satisfies
$$k_{\Omega_2}(\omega,v)\geq\frac{C|v|}{{\delta_{\Omega_2}(\omega)}^{\nu_2}}, \:\:\:\: \forall \omega\in\Omega_2, \:\: v\in\mathbb{C}^n, $$
for some $\nu_1, \:\nu_2, \:C>0$, where
$\delta_{\Omega_i}(z):=\inf\{|w-z|, w \in \partial \Omega_i\}, \:
i=1,2$, then the proper holomorphic map $f:\Omega_1\rightarrow
\Omega_2$ extends to a H\"{o}lder continuous map of
$\overline{\Omega}_{1}$. This result holds in particular if $\Omega_i$ are
{\it strictly pseudoconvex domains} and more generally pseudoconvex
domains with {\it finite type}. There are many other generalizations, and we
refer the reader to the survey
\cite{forstneric1993proper} by F. Forstneri\u{c}.

In \cite{Forstneric1987Localization}, F. Forstneri\u{c} and J. P.
Rosay firstly proved a local version of the continuous extension. If
$f:\Omega_1\rightarrow \Omega_2$ is a proper holomorphic map,
$\xi\in\partial \Omega_1$ and $w_0\in\partial \Omega_2$ are $C^2$
strictly pseudoconvex points and $w_0\in C(f,\xi)$ (i.e. the
cluster set of all limit points of $f$ as points in $\Omega_1$
approach to $\xi \in \partial \Omega_1$), then $f$ extends to $\xi$
continuously. After that, A. B. Sukhov generalized the local result to
the case of finite type in \cite{sukhov1995boundary}.

One of the goals of this article is to provide a local version of the latter extension results
along the lines of the work described in the previous paragraph. For this, we need a
localization theorem for the Kobayashi metric in the style of Forstneri\u{c}-Rosay \cite{Forstneric1987Localization} (also see
\cite{Zimmer2016Gromov,Zimmer2017The} by Zimmer). It turns out that there is a natural class of bounded domains, which admits
(in contrast to the domains in the latter results) domains with non-smooth boundary, as well
as domains with boundary points of infinite type, for which such a localization theorem can
be given. It is for these considerations that we focus on domains that are locally {\it log-type}
convex.

\bigskip
At first we give the definition of log-type convex domains.
\begin{defn}\label{logconvex}
A bounded convex domain $\Omega \subset \mathbb C^{n}, \:n\geq2$, is called log-type convex if
\begin{align}
\delta_{\Omega}(z;v)\leq\frac{C}{|\log\delta_{\Omega}(z)|^{1+\nu}}, \:\:\: z\in\Omega,\: v\in \mathbb{C}^n
\end{align}
for some $\nu, \:C>0$. See Section 2 for the definitions of $\delta_{\Omega}(z)$ and $\delta_{\Omega}(z;v)$.

\end{defn}
\begin{rmk}
In \cite{MercerComplex}, P. R. Mercer firstly introduced '$m$-convex' domains. A bounded convex domain is called $m$-convex if there exists $C>0$ such that $\delta_{\Omega}(z;v)\leq C\delta_{\Omega}^{\frac{1}{m}}(z)$ for any $z\in\Omega, v\in\mathbb{C}^n$.
\end{rmk}
\begin{exam}
$\Omega=\{(z_1,z_2)\in\mathbb{C}^2:Rez_1>e^{-\frac{1}{|z_2|^2}}\}$, then $\Omega\cap B_{1}(0)$ is log-type convex.
\end{exam}
Note that in this definition there is no boundary regularity assumptions. In \cite{bharali2016complex}
G. Bharali proved the (global) continuous extension problem of the log-type convex domain with $C^1$ smooth boundary. Then in \cite{Bharali2017Goldilocks}, G. Bharali and A. Zimmer introduced
a class of pseudoconvex domains named {\it 'Goldilocks' domains}, and
proved a lot of properties of such domains. In particular, each global
log-type convex domain is a 'Goldilocks' domain.

However, in this paper we are interested in results that follow from the local properties
of the boundary of the domain considered. Now, a localized version of the 'Goldilocks'
property is hard to formulate in a way that is useful. But we noticed that the property
that Bharali-Zimmer have used often in the proofs in \cite{Bharali2017Goldilocks}, viz. a form of visibility, \emph{does}
admit a localized form (in this regard, see \cite{BharaliA} by Bharali-Maitra whose proofs rely purely on the latter visibility property). It turns out that domains that are locally log-type convex exhibit
the localized property of visibility $-$ which will be very useful. Thus, we introduce the
\emph{locally log-type convex} domains. If $\Omega \subset \mathbb C^n$ is a bounded domain, $\Omega$ is locally log-type
convex if $\Omega$ is log-type convex around some boundary point: i.e. there exists a connected
open set $U$ with $\Omega\cap \partial U\neq\emptyset$  such that $\Omega\cap U$ is log-type convex.

We will follow the method of F. Forstneri\u{c}-J. P. Rosay
\cite{Forstneric1987Localization} and A. Zimmer
\cite{Zimmer2016Gromov} to generalize the local version of continous
extensions. Firstly we prove a localization result near those log-type convex points.
\begin{thm}\label{localdistance}
Let $\Omega$ be a bounded domain in $\mathbb{C}^n$. Suppose that there exists a connected
open set $U$ with $\Omega\cap \partial U\neq\emptyset$ and $\Omega\cap U$ is log-type convex. For any open set $V\subset U$ with $\overline V \subset U$, there exists $K>0$ such that the Kobayashi distances satisfies
$$K_{\Omega}(p,q)\leq K_{\Omega\cap U}(p,q)\leq K_{\Omega}(p,q)+K, $$
for any $p, \:q\in V\cap \Omega$.
\end{thm}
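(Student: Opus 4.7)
The lower bound $K_\Omega(p,q)\le K_{\Omega\cap U}(p,q)$ is immediate from the distance-decreasing property of the Kobayashi pseudodistance under the inclusion $\Omega\cap U\hookrightarrow\Omega$. All the work is in the upper bound.

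I would follow the localization scheme of Forstneri\u{c}--Rosay \cite{Forstneric1987Localization}, as adapted to non-smooth boundaries by Zimmer \cite{Zimmer2016Gromov} and Bharali--Zimmer \cite{Bharali2017Goldilocks}. Fix intermediate opens $V\Subset V'\Subset U$. Given $\varepsilon>0$ and $p,q\in V\cap\Omega$, take an $\varepsilon$-near-geodesic $\gamma:[0,T]\to\Omega$ (either a piecewise $C^1$ curve or, equivalently, a chain of analytic discs) realizing $K_\Omega(p,q)$ to within $\varepsilon$. The goal is to replace $\gamma$ by a comparable object inside $\Omega\cap U$ whose Kobayashi length exceeds that of $\gamma$ by at most a uniform additive constant depending only on $V,V',U,\Omega$.

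The first ingredient is a negative plurisubharmonic barrier $\psi$ on $\Omega\cap U$ that vanishes along $\partial U\cap\overline{\Omega}$ and is bounded away from zero on $V'$. This can be built directly from the defining inequality of Definition~\ref{logconvex}, along the lines of Bharali \cite{bharali2016complex}, and plugged into a Royden-type infinitesimal localization of the form
\[
k_{\Omega\cap U}(z,v)\;\le\;k_\Omega(z,v)\bigl(1+\eta(z)\bigr),\qquad z\in V'\cap\Omega,\;v\in\mathbb{C}^n,
\]
whose error $\eta(z)$ tends to $0$ as $z\to\partial U\cap\overline{\Omega}$. The mechanism is standard: precompose an extremal analytic disc in $\Omega$ based at $z$ with a hyperbolic contraction whose radius is controlled by $\psi$, trapping the image in $\Omega\cap U$ at a cost governed by $\psi$. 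The exponent $1+\nu>1$ in the log-type inequality is precisely what guarantees that along a unit-speed Kobayashi curve, $\int\eta(\gamma(t))\,dt$ remains uniformly bounded; this is the point at which log-type convexity (as against mere $m$-convexity) is decisive.

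The harder, and in my view the principal, step is to ensure that $\gamma$ can actually be taken to lie inside $V'$. A priori, $\gamma$ is only constrained by the global geometry of $\Omega$, about which we assume nothing, and could exit $\partial V'$ many times. The remedy is the \emph{localized visibility} property enjoyed by log-type convex boundary points --- the genuinely local half of the Goldilocks framework that the authors have already advertised in the introduction and that is isolated in a purely local form by Bharali--Maitra \cite{BharaliA}. Concretely, visibility at $\partial U\cap\overline{\Omega}$ together with the barrier $\psi$ allows each sub-arc of $\gamma$ lying outside $V'$ to be rerouted through $\Omega\cap U$ by a path whose $K_{\Omega\cap U}$-length is controlled by the $K_\Omega$-length of the original sub-arc up to an additive constant, with the total additive cost of all such reroutes bounded uniformly by a single $K=K(V,V',U,\Omega)$. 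Combining the rerouted curve in $\Omega\cap U$ with the infinitesimal bound on the portions that stayed in $V'$ yields
\[
K_{\Omega\cap U}(p,q)\;\le\;K_\Omega(p,q)+\varepsilon+K,
\]
and letting $\varepsilon\downarrow 0$ concludes the proof. The delicate point to verify carefully is that the rerouting cost is truly additive and not multiplicative in the number of excursions; this is exactly what visibility, as opposed to a mere pointwise comparison of metrics, buys.
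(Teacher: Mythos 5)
Your plan has the same three-part architecture as the paper's proof --- an infinitesimal comparison $k_{\Omega\cap U}\le(1+\eta)k_\Omega$ near $\partial\Omega\cap U$, integrability of $\eta$ along almost-geodesics coming precisely from the exponent $1+\nu>1$, and local visibility to control excursions --- so the skeleton is right. But two of your ingredients are not actually secured. First, the plurisubharmonic barrier: the paper never constructs one, and for good reason --- Bharali's barrier construction in \cite{bharali2016complex} uses $C^1$ boundary, which is exactly the hypothesis this theorem drops. Instead the paper derives the infinitesimal localization (Lemma \ref{Lemma1} and Theorem \ref{local 3}) purely from the two-sided convexity estimates $|v|/(2\delta_{\Omega\cap U}(x;v))\le k_{\Omega\cap U}(x;v)$ and $K_{\Omega\cap U}(x,y)\ge\tfrac12\log(|x-\xi|/|y-\xi|)$, combined with the Schwarz lemma and Hadamard's three-circle theorem applied to analytic discs in $\Omega$; no plurisubharmonic function is needed. (Also, as stated, your barrier's properties are inverted: to trap a disc centered in $V'$ inside $U$ you need $\psi$ close to $0$ at the center and uniformly negative near $\partial U\cap\Omega$, not the other way around.)

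Second, and more seriously, the step you yourself flag as delicate --- that the total cost of rerouting all excursions of $\gamma$ outside $V'$ is additively bounded --- is the actual content of the paper's Lemma \ref{almostgeodesic}, and your plan does not supply an argument for it. The paper's resolution removes the problem of ``many excursions'' altogether: for a $(1,\kappa)$-almost geodesic joining two points of $B_{\epsilon/4}(\xi)$ that leaves $B_\epsilon(\xi)$, let $a'$ be the first exit time from $B_{\epsilon/2}(\xi)$ and $b'$ the last entry time. Visibility (Lemma \ref{visibility}) together with the lower bound $K_\Omega\gtrsim\tfrac12\log(1/\delta_\Omega)$ forces $\sigma(a')$ and $\sigma(b')$ to lie in a fixed compact subset of $\Omega\cap U$ (otherwise one could shortcut $\sigma$ by an arbitrarily large amount, contradicting almost-geodesy), so the entire middle portion is replaced by a single Euclidean segment of uniformly bounded Kobayashi length. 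There is thus exactly one reroute to pay for, and Lemma \ref{local} handles the two end pieces, which do stay in a small ball. Until you either reproduce this first-exit/last-entry argument or give another proof that the rerouting cost is additive rather than multiplicative in the number of excursions, the upper bound is not established. A final small point: the competitor curve should be a $(1,\kappa)$-almost geodesic in the sense of Bharali--Zimmer (Lemma \ref{almost}) rather than a generic $\varepsilon$-minimizer, since the integration step needs the pointwise bound $k_\Omega(\sigma(t);\sigma'(t))\le\lambda$.
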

\noindent
We can define,
in analogy with Definition \ref{logconvex}, log-type $\mathbb{C}$-convex domains. We refer the reader to Section 3
for a formal definition, and present the following corollary to Theorem \ref{localdistance}.
\begin{cor}\label{locallocal}
Suppose that $\Omega_1, \: \Omega_2$ are bounded domains in
$\mathbb{C}^n$ and $f$ is a roughly isometric embedding from
$\Omega_1$ to $\Omega_2$. Furthermore, suppose that $\Omega_1\cap
U_1$ is a log-type convex domain with Dini-smooth boundary, and
$\Omega_2\cap U_2$ is a log-type $\mathbb{C}$-convex domain with
Dini-smooth boundary, where $U_i \subset \mathbb C^n, \:\: U_i \cap \partial\Omega_i\neq \emptyset, \:\: i=1,2$.

If $\xi\in\partial\Omega_1\cap U_1$, $\zeta\in\partial\Omega_2\cap
U_2$ and $\zeta\in\mathcal{C}(f,\xi)$, then $f$ extends continuously
to $\xi$.
\end{cor}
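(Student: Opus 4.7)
The strategy is to use Theorem~\ref{localdistance} on both sides to transport the rough-isometry hypothesis to the local Kobayashi distances $K_{\Omega_i \cap U_i}$, and then to exploit the visibility property enjoyed by log-type (resp.\ log-type $\mathbb{C}$-convex) Dini-smooth domains at pairs of distinct boundary points, in the spirit of Bharali--Zimmer and Bharali--Maitra.

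\textbf{Step 1 (Localisation).} Choose open sets $V_i$ with $\xi \in V_1 \Subset U_1$ and $\zeta \in V_2 \Subset U_2$. Theorem~\ref{localdistance} yields constants $K_1, K_2 > 0$ with $|K_{\Omega_i \cap U_i}(p,q) - K_{\Omega_i}(p,q)| \le K_i$ for $p,q \in V_i \cap \Omega_i$, $i=1,2$. Writing $B$ for the rough-isometry constant of $f$, whenever $z, w \in V_1 \cap \Omega_1$ and $f(z), f(w) \in V_2 \cap \Omega_2$ one then obtains
$$|K_{\Omega_1 \cap U_1}(z,w) - K_{\Omega_2 \cap U_2}(f(z), f(w))| \le K_1 + K_2 + B.$$
Thus all subsequent estimates can be carried out inside the good subdomains $\Omega_i \cap U_i$, whose log-type convexity together with Dini-smoothness yields Gromov hyperbolicity and boundary visibility.

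\textbf{Step 2 (Main argument).} Let $z_n \to \xi$ in $\Omega_1$. The standard Dini-smooth boundary estimate gives $K_{\Omega_1 \cap U_1}(z_0, z_n) \to \infty$, whence by localisation and rough isometry $K_{\Omega_2}(f(z_0), f(z_n)) \to \infty$; so every cluster point $\zeta'$ of $\{f(z_n)\}$ must lie in $\partial\Omega_2$. Assume for contradiction that $\zeta' \neq \zeta$. Pick $w_n \to \xi$ with $f(w_n) \to \zeta$ (possible since $\zeta \in \mathcal{C}(f,\xi)$) and, after passing to a common subsequence, let $\sigma_n : [0, T_n] \to \Omega_1 \cap U_1$ be Kobayashi geodesics joining $z_n$ to $w_n$. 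Because both endpoints converge to the \emph{same} boundary point $\xi$, Gromov hyperbolicity of $\Omega_1 \cap U_1$ forces $\sigma_n$ into any prescribed neighbourhood of $\xi$, hence into $V_1 \cap \Omega_1$ for $n$ large. The composition $f \circ \sigma_n$ is then a $(1, K_1 + K_2 + B)$-quasi-geodesic in $\Omega_2 \cap U_2$ whose endpoints converge to the distinct boundary points $\zeta, \zeta'$. Visibility of $\Omega_2 \cap U_2$ supplies a compact $L \subset \Omega_2 \cap U_2$ and parameters $t_n$ with $f(\sigma_n(t_n)) \in L$; rough isometry then forces $\{\sigma_n(t_n)\}$ into a compact subset of $\Omega_1 \cap U_1$, contradicting the thin-triangle conclusion that $\sigma_n(t_n) \to \xi \in \partial\Omega_1$.

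\textbf{Main obstacle.} The genuinely delicate point is to ensure that the quasi-geodesic $f \circ \sigma_n$ actually interacts with the visibility domain $\Omega_2 \cap U_2$ -- that is, that it is eventually contained in $V_2 \cap \Omega_2$ so that the visibility of the localised domain may legitimately be invoked -- even though a priori $\zeta'$ might lie outside $\overline{U_2}$. Ruling this out requires a tracking argument monitoring the first entrance and last exit of $f \circ \sigma_n$ into $V_2$, combining the localised metric inside $V_2$ with the global Kobayashi growth outside, in the spirit of Zimmer's upgrading of the Forstneri\u{c}--Rosay localisation to a continuous extension theorem.
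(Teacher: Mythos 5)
Your overall architecture (localize the metric on both sides, use visibility of $\Omega_2\cap U_2$ to trap the image of a connecting almost-geodesic in a compact set, and derive a contradiction from the fact that the two source sequences converge to the \emph{same} point $\xi$) is in the same spirit as the paper's, but the step you use on the $\Omega_1$ side is not available. You assert that ``Gromov hyperbolicity of $\Omega_1\cap U_1$'' forces the geodesics $\sigma_n$ joining $z_n$ to $w_n$ into any prescribed neighbourhood of $\xi$. Log-type convex domains are in general of infinite type (the paper's own example $\{\mathrm{Re}\,z_1>e^{-1/|z_2|^2}\}$ is), and by Zimmer's theorem quoted in Section 2 a smooth bounded convex domain is Gromov hyperbolic in the Kobayashi metric if and only if it is of finite type; so $\Omega_1\cap U_1$ need not be Gromov hyperbolic, and nothing in the paper (or in your argument) establishes that it is. Even granting hyperbolicity, passing from ``both endpoints tend to the same Euclidean boundary point'' to ``the geodesic stays Euclidean-close to $\xi$'' would require identifying Euclidean boundary convergence with convergence in the Gromov boundary, a further unproved step. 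What the paper uses instead is Lemma \ref{product} (a Dini-smooth adaptation of Zimmer's Gromov-product theorem, proved via Lemma \ref{est}): for a convex domain with Dini-smooth boundary, $x_n,y_n\to\xi$ forces $(x_n|y_n)_{z_0}\to\infty$, with no hyperbolicity needed. Combined with Theorem \ref{localdistance} this gives $(x_n|y_n)_{z_0}^{\Omega_1}\to\infty$; the rough isometry changes Gromov products by at most $\tfrac32 C$, so $(f(x_n)|f(y_n))_{f(z_0)}^{\Omega_2}\to\infty$; and Lemma \ref{visibility} bounds that Gromov product above when the images accumulate at distinct boundary points. You should replace your hyperbolicity/thin-triangle step by this Gromov-product computation.

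A second point: you correctly flag as the ``main obstacle'' the possibility that the second cluster point $\zeta'$ lies outside $\overline{U_2}$, so that the localized visibility cannot be invoked verbatim, but you do not resolve it --- you only announce that a ``tracking argument'' is required. An announced but unexecuted step is a gap in the proof as written. (In the Gromov-product formulation the issue is milder: once one point of the almost-geodesic is pinned in a compact set $A$, the triangle inequality bounds $(f(x_n)|f(y_n))_{f(z_0)}$ regardless of where $\zeta'$ sits, and the proof of Lemma \ref{visibility} only exploits the portion of the geodesic inside a small ball about the good boundary point $\zeta$.) As it stands, your proposal has the right skeleton but is missing the one lemma that makes the $\Omega_1$-side blow-up work, and substitutes for it a claim that is false for the class of domains under consideration.
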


\bigskip
Another application is on the local boundary property of domains biholomorphic to the {\it Teichm\"{u}ller space}.

Let $S$ be a surface of finite type $(g, n)$, i.e., an oriented finite genus surface with $n$ punctures. The Teichm\"uller space $\mathcal T_{g, n}$ is the set of marked complex structures on $S$. L. Ahlfors, D. Spencer, K. Kodaira. L. Bers proved that there is a natural complex structure on $\mathcal T_{g, n}$. Furthermore, Bers proved that $\mathcal T_{g, n}$ is biholomorphic to a bounded pseudoconvex domain in $\mathbb C^{3g-3+n}$. However, it is not explicit and not known how smooth the boundary of $\mathcal T_{g, n}$ is.

Recently, V. Markovic proved in \cite{markovic2018caratheodory} that the Kobayashi metric and the Caratheodory metric is not identical on $\mathcal T_{g,n}$. Combining with a result of L. Lempert \cite{lempert1981metrique}, he proved that the Teichm\"uller space $\mathcal T_{g,n}$ is not biholomorphic to a bounded convex domain in $\mathbb C^{3g-3+n}$.
Moreover, by using the deep result of the ergodicity of the Teichm\"{u}ller geodesic flow, S. Gupta and H. Seshadri proved in \cite{Gupta2017On} that $\mathcal T_{g,n}$ cannot be biholomorphic to a bounded domain $\Omega \subset \mathbb C^{3g-3+n}$ which is locally strictly convex at some boundary point.

We prove another result about Teichm\"{u}ller space without using the ergodicity of the Teichm\"{u}ller geodesic flow. The technique of the proof mainly comes from A. Zimmer \cite{Zimmer2018Smoothly}.
\begin{thm}\label{Teichmuller}
The Teichm\"uller space $\mathcal T_{g,n}$ cannot be biholomorphic to a bounded pseudo-convex domain $\Omega \subset \mathbb C^{3g-3+n}$ which is locally log-type convex at some boundary point.
\end{thm}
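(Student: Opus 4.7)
The plan is to argue by contradiction, assuming there is a biholomorphism $F\colon\mathcal T_{g,n}\to\Omega$ with $\Omega\cap U$ log-type convex and $\xi\in\partial\Omega\cap U$.

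First, I will localize the Kobayashi geometry at $\xi$. By Theorem~\ref{localdistance}, for any $V\Subset U$ the Kobayashi distance $K_\Omega$ restricted to $V\cap\Omega$ differs from $K_{\Omega\cap U}$ by at most an additive constant. Since $\Omega\cap U$ is log-type convex, it belongs to the Goldilocks class of \cite{Bharali2017Goldilocks} and in particular satisfies a visibility property: two sequences in $V\cap\Omega$ converging to distinct points of $\partial(\Omega\cap U)$ are joined only by Kobayashi quasi-geodesics that pass through a fixed compact subset; equivalently, Kobayashi almost-geodesic rays that eventually stay in $V$ have a single cluster point on $\partial\Omega$.

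Next, I will transport this rigidity to the Teichm\"uller side. By Royden's theorem the Kobayashi and Teichm\"uller metrics coincide on $\mathcal T_{g,n}$, so $F$ is an isometry between these metric spaces and $F\,\mathrm{MCG}(S)\,F^{-1}$ is a subgroup of $K_\Omega$-isometries of $\Omega$. Fixing a pseudo-Anosov $\phi$, its Teichm\"uller axis $\gamma$ is a complex geodesic; after replacing $\phi$ by a suitable conjugate (or using the density of pseudo-Anosov axes in $\mathcal T_{g,n}$), I will arrange the forward ray of $F(\gamma)$ to accumulate at a point of $\partial\Omega\cap U$. The local visibility from the previous paragraph then pins both ends of $F(\gamma)$ to distinct points $\xi^\pm\in\partial\Omega$.

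Finally, I will extract a contradiction from the interaction between this boundary behavior and the algebraic structure of $\mathrm{MCG}(S)$, following the strategy of \cite{Zimmer2018Smoothly}. Two natural routes are: (i) compare the virtually cyclic centralizer of $\phi$ in $\mathrm{MCG}(S)$ with the stabilizer in $\mathrm{Aut}(\Omega)$ of the pair $\{\xi^+,\xi^-\}$, which under the local log-type convex hypothesis can be shown to be strictly larger via a rescaling/normal-families argument at $\xi$; or (ii) exploit a rank $\ge 2$ abelian subgroup $\langle T_\alpha,T_\beta\rangle$ of $\mathrm{MCG}(S)$ generated by Dehn twists along disjoint simple closed curves, whose joint orbit dynamics on the Thurston compactification produce multiple distinct limit laminations incompatible with the orbit rigidity imposed on commuting $K_\Omega$-isometries by visibility at $\xi$. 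The main obstacle will be this last step: the localized (rather than global) log-type convex hypothesis rules out a direct appeal to Gromov hyperbolicity of $\Omega$, so the contradiction must be extracted purely from boundary behavior confined to $V$, requiring a careful interplay between the asymptotic Teichm\"uller-disc structure of $\mathcal T_{g,n}$ and the boundary geometry supplied by Theorem~\ref{localdistance}.
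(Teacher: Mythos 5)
There is a genuine gap at the heart of your proposal: the step ``arrange the forward ray of $F(\gamma)$ to accumulate at a point of $\partial\Omega\cap U$'' is precisely the hard part, and you do not justify it. The hypothesis is only that $\Omega$ is log-type convex near \emph{some} boundary point, so there is no a priori reason that any pseudo-Anosov axis (or any conjugate of one) lands in $U$. Making this work via ``density of pseudo-Anosov axes'' would require the ergodicity/denseness results for the Teichm\"uller geodesic flow --- which is exactly the ingredient of Gupta--Seshadri that this theorem is designed to avoid. The paper instead produces an orbit $\gamma_n(z_0)\to\xi$ accumulating at a \emph{chosen} boundary point by a volume argument: $\mathcal T_{g,n}/\mathrm{Mod}_{g,n}$ has finite Kobayashi volume, the Bers embedding gives a uniform lower bound on the squeezing function, and Yeung's volume estimate (Lemma~\ref{squeezing}) forces injectivity radii $\delta_n$ along any sequence $y_n\to\xi$ to behave in one of three ways, each of which (using the visibility Lemma~\ref{visibility} and a Busemann-function argument) yields an orbit converging to $\xi$. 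None of this appears in your outline, and without it your argument never gets off the ground.

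A second, related omission is the choice of $\xi$ itself. The paper invokes Alexandroff's theorem to pick $\xi\in\partial\Omega\cap U$ with an interior sphere contact; this regularity is what makes the Kim--Krantz--Pinchuk rescaling at $\xi$ converge and produce a one-parameter subgroup of $\mathrm{Aut}(\Omega)$, contradicting discreteness of $\mathrm{Mod}_{g,n}$. Your concluding step is offered only as ``two natural routes,'' neither carried out: route (i) gestures at the same rescaling but without the orbit accumulation or the sphere-contact point that make it run, and route (ii) (commuting Dehn twists versus visibility) is speculative --- local visibility at a single boundary point does not obviously constrain the dynamics of a rank-two abelian subgroup whose orbits need not approach $U$ at all. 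As written, the proposal identifies the correct general strategy (contradiction, localization, visibility, Zimmer-style rigidity) but leaves unproved exactly the two steps --- orbit accumulation at a good boundary point, and the rescaling contradiction --- that constitute the actual proof.
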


%
\section{{\bf Preliminaries}}\label{BM}
\subsection{Notations}\

(1) \:For $z\in\mathbb{C}^n$ let $|z|$ be the standard Euclidean
norm and $d_{euc}(z_1,z_2)= |z_1-z_2 |$ be the standard Euclidean
distance.

(2) \:Given an open set $\Omega\subset\mathbb{C}^n,\:x\in\Omega$ and
$v\in\mathbb{C}^n\backslash\{0\}$, let
$$\delta_{\Omega}(x)=\inf\{d_{euc}(x,\xi):\xi\in\partial \Omega\}$$
as before, and let
$$\delta_{\Omega}(x,v)=\inf\{d_{euc}(x,\xi):\xi\in\partial \Omega\cap(x+\mathbb{C}v)\}.$$

(3) \:For any curve $\sigma$ we denote by $L(\sigma)$ the
length of $\sigma$.

(4) \:For any $z_0 \in \mathbb C^n$ and $\epsilon >0$, we denote by $B_{\epsilon}(z_0)$ the open ball
$B_{\epsilon}(z_0)=\{z\in \mathbb C^n| \:|z-z_0|<\epsilon\}$.
\subsection{The Kobayashi metric}\

Given a domain $\Omega \subset \mathbb{C}^{n}$, the (infinitesimal)
Kobayashi metric is the pseudo-Finsler metric defined by
$$k_{\Omega}(x ; v)=\inf \left\{|\xi| : f \in \operatorname{Hol}(\mathbb{D}, \Omega), \: f(0)=x,
d(f)_{0}(\xi)=v\right\}.$$ Define the length of any curve $\sigma:[a,b]\rightarrow \Omega$
to be
$$L(\sigma)=\int_{a}^{b} k_{\Omega}\left(\sigma(t) ; \sigma^{\prime}(t)\right) d
t.$$
S. Venturini \cite{VenturiniPseudodistances} proved that
the Kobayashi pseudo-distance can be given by
\begin{align*}
K_{\Omega}(x, y)&=\inf_\sigma \big\{L(\sigma)| \:\sigma :[a, b]
\rightarrow \Omega \text { is any absolutely continuous curve }\\
& \text { with } \sigma(a)=x \text { and } \sigma(b)=y \big\}.
\end{align*}
Its proof is based on an observation due to H. L. Royden \cite{royden1971remarks}.

There are well known estimates on the Kobayashi metrics on convex domains.
\begin{proposition}
Suppose that $\Omega \subset \mathbb{C}^{n}$ is an bounded convex domain. Then, for any $x, \:y \in \Omega, \:v \in \mathbb C^n$,
\begin{align}
\frac{|v|}{2\delta_{\Omega}(x ; v)} \leq k_{\Omega}(x ; v) \leq \frac{|v|}{\delta_{\Omega}(x ; v)},\\
K_{\Omega}(x, y)\geq \frac{1}{2}\log \left(\frac{|x-\xi|}{|y-\xi|}\right),
\end{align}
where $\xi \in \partial\Omega \cap \{x+(x-y)\cdot\mathbb C\}$.
\end{proposition}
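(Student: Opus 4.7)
The plan is to handle the three inequalities separately, exploiting the convexity of $\Omega$ via either the one-dimensional complex slice $D_{x,v}:=\{\mu\in\mathbb{C}:x+\mu v\in\Omega\}$ or a complex supporting hyperplane at a suitable boundary point of $\Omega$.

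For the upper bound $k_{\Omega}(x;v)\leq|v|/\delta_{\Omega}(x;v)$, first observe that $D_{x,v}$ is a convex open neighbourhood of $0$ in $\mathbb{C}$ with $d_{\mathbb{C}}(0,\partial D_{x,v})=\delta_{\Omega}(x;v)/|v|$; by convexity the open disk of that radius about $0$ sits inside $D_{x,v}$. Hence $f(\lambda):=x+(\delta_{\Omega}(x;v)/|v|)\lambda v$ defines a holomorphic map $\mathbb{D}\to\Omega$ with $f(0)=x$, and reading off the derivative at $0$ yields the required upper bound directly from the definition of $k_{\Omega}$.

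For the infinitesimal lower bound, choose a nearest boundary point $\xi_{0}=x+\mu_{0}v\in\partial\Omega$ with $|\mu_{0}v|=\delta_{\Omega}(x;v)$. Convexity provides a real supporting hyperplane of $\Omega$ at $\xi_{0}$, which extends to a unique $\mathbb{C}$-linear functional $L:\mathbb{C}^{n}\to\mathbb{C}$ with $\mathrm{Re}\,L(z-\xi_{0})<0$ on $\Omega$. Then $F(z):=-L(z-\xi_{0})$ maps $\Omega$ holomorphically into the right half-plane $\mathbb{H}:=\{w:\mathrm{Re}\,w>0\}$, on which the Kobayashi density is $|dw|/(2\,\mathrm{Re}\,w)$. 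Using distance-decreasing together with the bound $\mathrm{Re}\,F(x)=\mathrm{Re}\,L(\mu_{0}v)\leq|\mu_{0}||L(v)|$, one finds
\[
k_{\Omega}(x;v)\;\geq\;k_{\mathbb{H}}\bigl(F(x);(dF)_{x}(v)\bigr)\;=\;\frac{|L(v)|}{2\,\mathrm{Re}\,F(x)}\;\geq\;\frac{1}{2|\mu_{0}|}\;=\;\frac{|v|}{2\delta_{\Omega}(x;v)}.
\]

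For the integrated bound I would apply the same supporting-hyperplane construction at the boundary point $\xi\in\partial\Omega\cap(x+(x-y)\mathbb{C})$. Writing $v=x-y$ and $\xi=x+\mu_{\xi}v$, a short linear computation gives $F(x)=\mu_{\xi}L(v)$ and $F(y)=(\mu_{\xi}+1)L(v)$, so $|F(x)|/|F(y)|=|x-\xi|/|y-\xi|$. The main remaining step --- and where I expect the real work to lie --- is the half-plane estimate $K_{\mathbb{H}}(w_{1},w_{2})\geq\tfrac{1}{2}\bigl|\log(|w_{1}|/|w_{2}|)\bigr|$. I plan to obtain this by passing through the biholomorphism $\log:\mathbb{H}\to S:=\{|\mathrm{Im}|<\pi/2\}$, on which a short calculation shows the Kobayashi density equals $|d\zeta|/(2\cos(\mathrm{Im}\,\zeta))\geq|d\zeta|/2$; hence the length of any admissible path in $S$ bounds half the horizontal displacement of its endpoints from below. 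Pulling back through $\log$ and then applying distance-decreasing through $F$ yields the claimed inequality.
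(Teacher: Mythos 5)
Your proposal is correct, and since the paper states this proposition without proof (citing it as a well-known estimate), there is nothing to diverge from: your argument is the standard one, using the affine disk in the slice $x+\mathbb{C}v$ for the upper bound and a complex supporting functional mapping $\Omega$ into a half-plane for both lower bounds. All the details check out, including the implicit point that $L(v)\neq 0$ (forced by $\operatorname{Re}F(x)>0$) and the strip computation giving $K_{\mathbb{H}}(w_1,w_2)\geq\tfrac12\bigl|\log(|w_1|/|w_2|)\bigr|$.
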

\begin{proposition}[Proposition 2.3, \cite{MercerComplex}]\label{mercer}
Suppose that $\Omega \subset \mathbb{C}^{n}$ is a bounded convex domain and fix $z_0\in\Omega$. Then there exists $\alpha,\beta>0$ such that: $\forall z\in\Omega$,
\begin{align}K_{\Omega}(z,z_0)\leq \alpha+\beta \log\frac{1}{\delta_{\Omega}(z)}.
\end{align}
\end{proposition}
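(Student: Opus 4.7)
The plan is to construct an explicit curve from $z_0$ to $z$ and bound its Kobayashi length using the upper estimate on $k_\Omega$ for convex domains in Proposition 2.2. Concretely, I parametrize the straight line segment $\gamma(t) = (1-t)z_0 + tz$, $t \in [0,1]$. Combining the length characterization of $K_\Omega$ with the inequality $k_\Omega(x;v) \leq |v|/\delta_\Omega(x;v)$ and the trivial observation $\delta_\Omega(x;v) \geq \delta_\Omega(x)$ (the infimum defining $\delta_\Omega(x;v)$ runs over a subset of $\partial\Omega$), one immediately obtains
\[
K_\Omega(z,z_0) \leq \int_0^1 \frac{|z-z_0|}{\delta_\Omega(\gamma(t))}\,dt.
\]

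The key ingredient is the concavity of $\delta_\Omega$ on a bounded convex domain $\Omega$. I would verify this directly: if $B(z_i,\delta_\Omega(z_i)) \subset \Omega$ for $i=1,2$, then setting $w := (1-t)z_1 + tz_2$ and $r := (1-t)\delta_\Omega(z_1) + t\delta_\Omega(z_2)$, every $v$ with $|v|<r$ can be written as $(1-t)v_1+tv_2$ with the rescalings $v_i := (\delta_\Omega(z_i)/r)\,v$, so that $w+v$ is a convex combination of points in $\Omega$ and hence lies in $\Omega$. Writing $d_0 := \delta_\Omega(z_0)$ and $d := \delta_\Omega(z)$, concavity gives $\delta_\Omega(\gamma(t)) \geq (1-t)d_0 + td$, and the elementary integral
\[
\int_0^1 \frac{dt}{(1-t)d_0+td} \;=\; \frac{\log(d_0/d)}{d_0-d}
\]
then yields $K_\Omega(z,z_0) \leq |z-z_0| \cdot \dfrac{\log(d_0/d)}{d_0-d}$.

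To extract the advertised bound, I would split into two regimes. When $d \geq d_0/2$, the point $z$ lies in the compact subset $\{w \in \Omega : \delta_\Omega(w) \geq d_0/2\}$, on which $K_\Omega(\cdot,z_0)$ is bounded by a constant. When $d < d_0/2$, one has $d_0 - d \geq d_0/2$ and $|z-z_0| \leq \operatorname{diam}(\Omega)$, so the right-hand side is at most $\frac{2\operatorname{diam}(\Omega)}{d_0}\bigl(\log d_0 + \log(1/d)\bigr)$, which absorbs into the form $\alpha + \beta \log(1/\delta_\Omega(z))$ for suitable constants $\alpha,\beta>0$.

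No step strikes me as a serious obstacle: the only substantive input beyond Proposition 2.2 is the concavity of $\delta_\Omega$, whose short proof was indicated above, and everything else reduces to a single straight-line integration and routine case analysis. The one place where a touch of care is needed is the degenerate case $d=d_0$ in the integral formula, which is handled by continuity or by folding it into the compact-set regime.
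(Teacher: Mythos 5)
Your argument is correct and complete: the paper itself gives no proof of this proposition (it is cited from Mercer), and your route --- integrating the upper bound $k_{\Omega}(x;v)\leq |v|/\delta_{\Omega}(x;v)\leq |v|/\delta_{\Omega}(x)$ along the straight segment from $z_0$ to $z$ and invoking the concavity of $\delta_{\Omega}$ on a convex domain to evaluate $\int_0^1 \bigl((1-t)d_0+td\bigr)^{-1}\,dt$ --- is exactly the standard argument behind Mercer's Proposition~2.3. The concavity verification and the two-regime case analysis are both sound, so nothing further is needed.
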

\subsection{Almost geodesics}
\begin{defn}
Suppose $\Omega$ is a bounded domain. If $I\subset\mathbb{R}$ is an
interval, a map $\sigma: I\rightarrow \Omega$ is called a {\it geodesic}
if, for all $s,t\in I$,
$$K_{\Omega}(\sigma(s),\sigma(t))=|t-s|.$$
And $\sigma$ is called a {\it rough geodesic} if there exists $C>0$ such
that
$$|t-s|-C\leq K_{\Omega}(\sigma(s),\sigma(t))\leq|t-s|+C.$$
For $\lambda \geq 1$ and $\kappa \geq 0$, a curve $\sigma : I
\rightarrow \Omega$ is called an $(\lambda, \kappa)$
-{\it almost-geodesic} if:

(1) for all $s, t \in I $
 $$\frac{1}{\lambda}|t-s|-\kappa \leq K_{\Omega}(\sigma(s), \sigma(t)) \leq
 \lambda|t-s|+\kappa;$$

(2) $\sigma$ is absolutely continuous (whence $\sigma^{\prime}(t)$ exists for almost every $t \in I ),$ and for almost every $t \in I$,
$$k_{\Omega}\left(\sigma(t) ; \sigma^{\prime}(t)\right) \leq
\lambda.$$
\end{defn}
In order to give a local estimate of the Kobayashi distance, we need
the properties of geodesics. However, for a general bounded domain
$(\Omega, K_{\Omega})$, it may not be Cauchy complete. Furthermore, it's not
clear whether there is a geodesic between any two points.
Fortunately, G. Bharali and A. Zimmer \cite{Bharali2017Goldilocks} proved
 that there is a $(1,\kappa)$-almost
geodesic between any two points in a bounded domain.
\begin{lemma}\label{almost}
Suppose that $\Omega \subset \mathbb{C}^{n}$ is a bounded domain.
For any $\kappa>0$ and $x, y \in \Omega$, there exists a $(1,
\kappa)$-almost geodesic $\sigma :[a, b] \rightarrow \Omega$ with
$\sigma(a)=x$ and $\sigma(b)=y$.
\end{lemma}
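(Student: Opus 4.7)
The plan is to construct $\sigma$ by first choosing an absolutely continuous curve whose Kobayashi length is within $\kappa/2$ of $K_{\Omega}(x,y)$, and then reparametrizing by Kobayashi arc length so that the reparametrized curve has unit Kobayashi speed almost everywhere. The almost-minimizing property of the original curve will transfer into the lower bound in condition (1) of the definition of an almost-geodesic, while condition (2) will follow from the arc-length parametrization directly. By Venturini's description of $K_{\Omega}$ that is recalled in Section 2, I first pick an absolutely continuous $\gamma:[a,b]\to\Omega$ with $\gamma(a)=x$, $\gamma(b)=y$, and $L(\gamma)\le K_{\Omega}(x,y)+\kappa/2$. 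After a preliminary reparametrization by Euclidean arc length, I may assume $\gamma'(t)\neq 0$ for a.e.\ $t$; since $\Omega$ is Kobayashi hyperbolic (being bounded), this forces $k_{\Omega}(\gamma(t);\gamma'(t))>0$ for a.e.\ $t\in[a,b]$.

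Next I carry out the Kobayashi arc-length reparametrization. Set
\[
s(t):=\int_a^t k_{\Omega}(\gamma(\tau);\gamma'(\tau))\,d\tau,\qquad T:=s(b)=L(\gamma),
\]
and $\sigma:=\gamma\circ s^{-1}:[0,T]\to\Omega$. The change-of-variables formula for absolutely continuous functions gives $k_{\Omega}(\sigma(u);\sigma'(u))=1$ for a.e.\ $u\in[0,T]$, which is condition (2) in the definition of a $(1,\kappa)$-almost geodesic. The upper bound $K_{\Omega}(\sigma(r),\sigma(u))\le u-r$ in condition (1) then follows from integrating this identity between $r$ and $u$. For the lower bound, write $r_1:=s^{-1}(r)$ and $u_1:=s^{-1}(u)$, so that $u-r=L(\gamma|_{[r_1,u_1]})$ by the change of variables. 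Combining the triangle inequality
\[
K_{\Omega}(x,y)\;\le\;K_{\Omega}(x,\sigma(r))+K_{\Omega}(\sigma(r),\sigma(u))+K_{\Omega}(\sigma(u),y)
\]
with the obvious bounds $K_{\Omega}(x,\sigma(r))\le L(\gamma|_{[a,r_1]})$ and $K_{\Omega}(\sigma(u),y)\le L(\gamma|_{[u_1,b]})$, together with the decomposition $L(\gamma)=L(\gamma|_{[a,r_1]})+(u-r)+L(\gamma|_{[u_1,b]})$, I obtain after rearrangement
\[
(u-r)\;\le\;K_{\Omega}(\sigma(r),\sigma(u))+\bigl(L(\gamma)-K_{\Omega}(x,y)\bigr)\;\le\;K_{\Omega}(\sigma(r),\sigma(u))+\kappa/2,
\]
which is the desired lower bound, with $\kappa$ to spare.

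The step I expect to be most delicate is the rigorous justification of the reparametrization, because $k_{\Omega}$ is only upper semi-continuous on $\Omega\times\mathbb{C}^n$ in general. One must argue carefully that $s$ is absolutely continuous and strictly increasing, that $s^{-1}$ is absolutely continuous on $[0,T]$, and that the change-of-variables formula applies along $\sigma$. The cleanest way around this, and the one taken by Bharali--Zimmer, is to invoke Royden's observation (cited in Section 2) that the Kobayashi length of any absolutely continuous curve is a limit of lengths of piecewise holomorphic-disk chains, along each of which everything is smooth; one performs the reparametrization on such approximating chains and then passes to the limit to recover $\sigma$.
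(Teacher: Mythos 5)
Your argument is correct, but note that the paper does not prove this lemma at all: it is quoted verbatim from Bharali--Zimmer \cite{Bharali2017Goldilocks}, so the only meaningful comparison is with their proof. Their construction is the one you describe in your last paragraph, but without any limiting process: for a fixed $\kappa$ one takes a single chain of analytic disks $f_1,\dots,f_m$ whose total Poincar\'e length is within $\kappa$ of $K_{\Omega}(x,y)$, concatenates the images of unit-speed Poincar\'e geodesics $u\mapsto f_i(\tanh u)$, and reads off condition (2) from the distance-decreasing property $k_{\Omega}(f_i(\zeta);df_i(\zeta)v)\le k_{\mathbb D}(\zeta;v)$; the lower bound in (1) comes from the same ``excess length'' triangle-inequality computation you use. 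Your route --- reparametrizing an arbitrary near-minimizing absolutely continuous curve by Kobayashi arc length --- is genuinely different and also works, and the delicate point you flag is handled exactly by the boundedness of $\Omega$: after Euclidean arc-length parametrization one has $|v|/\mathrm{diam}(\Omega)\le k_{\Omega}(z;v)\le |v|/\delta_{\Omega}(z)$, and since $\gamma([a,b])$ is compact in $\Omega$ the function $s$ is bi-Lipschitz, so $s^{-1}$ is Lipschitz, the chain rule holds a.e., and $k_{\Omega}(\sigma(u);\sigma'(u))=1$ a.e. (measurability of $t\mapsto k_{\Omega}(\gamma(t);\gamma'(t))$ follows from Royden's upper semicontinuity of $k_{\Omega}$). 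What the disk-chain approach buys is that it avoids this real-analysis bookkeeping entirely, since each segment is smooth by construction; what your approach buys is that it applies verbatim to any length metric defined by integrating an upper semicontinuous Finsler pseudometric with a two-sided comparison to the Euclidean norm, without reference to analytic disks. The one inaccuracy is your description of the fallback: no passage to the limit is needed, as a single approximating chain already yields the desired $(1,\kappa)$-almost geodesic.
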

\subsection{Gromov Product}
\begin{defn}
Let $(X, d)$ be a metric space. Given three points $x, y, o \in$ $X,$ the {\it Gromov product} is given by
$$(x | y)_{o}=\frac{1}{2}\Big(d(x, o)+d(o, y)-d(x, y)\Big).$$
A proper geodesic metric space $(X, d)$ is {\it Gromov hyperbolic} if and
only if there exists $\delta \geq 0$ such that, for all $o, x, y, z
\in X$,
$$(x | y)_{o} \geq \min \left\{(x | z)_{o},(z | y)_{o}\right\}-\delta.$$

\end{defn}
\begin{rmk}
In \cite{balogh2000gromov}, Z. Balogh and M. Bonk proved those strongly pseudoconvex domains with the Kobayashi metric are Gromov hyperbolic. Later A. Zimmer \cite{Zimmer2016Gromov} proved that smooth convex domains with the Kobayashi metrics are Gromov hyperbolic if and only if they are of finite type. Furthermore, he got some other results about Gromov product in \cite{Zimmer2017The}.
\end{rmk}
\begin{thm}[Thereom 4.1 in \cite{Zimmer2017The}]
Let $\Omega \subset \mathbb{C}^{n}$ be a bounded convex domain
with $C^{1, \epsilon}$ boundary and $o \in \Omega$. Suppose $\{z_{k}\}, \{w_{m}\}$ are
sequences in $\Omega$ such that $z_{k} \rightarrow x \in \partial
\Omega$ and $w_{m} \rightarrow y \in \partial \Omega.$ Then:

(1) If $x=y,$ then the Gromov product
$$\lim _{k, m \rightarrow \infty}\left(z_{k} | w_{m}\right)_{o}=\infty;$$

(2) If
$$\limsup _{k, m \rightarrow \infty}\left(z_{k} | w_{m}\right)_{o}=\infty,$$
then $T_{x}^{\mathbb{C}} \partial \Omega=T_{y}^{\mathbb{C}} \partial \Omega$, where  $T_{x}^{\mathbb{C}} \partial \Omega$
is the complex affine hyperplane tangent to $\partial\Omega$ at $x$.
\end{thm}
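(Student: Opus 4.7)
My plan is to combine the two Kobayashi-metric estimates recalled in Section~2 (Mercer's upper bound and the Lempert-type lower bound~(2.2)) with the explicit paraboloidal sandwich of $\partial\Omega$ provided by $C^{1,\epsilon}$ regularity. Near any boundary point $p$, after affine normalization so that the inward unit normal is $e_n$, we have $\{\operatorname{Re} z_n > C|(z',\operatorname{Im} z_n)|^{1+\epsilon}\}\subset\Omega\subset\{\operatorname{Re} z_n>0\}$ locally, which pins down the rate at which $\delta_\Omega$ decays along various complex directions.

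For part~(1), I place $x=y=0$ with inward normal $e_n$ and compare to an auxiliary interior point $r_\eta=\eta e_n$ at normal distance $\eta$, chosen slightly larger than $\max(\delta_\Omega(z_k),\delta_\Omega(w_m))$. Applying inequality~(2.2) along the complex line through $o$ and $z_k$, the $C^{1,\epsilon}$ sandwich ensures the Euclidean exit distance at $z_k$ is comparable to $\delta_\Omega(z_k)$, giving $K_\Omega(z_k,o)\geq -\tfrac{1}{2}\log\delta_\Omega(z_k)+O(1)$, and likewise for $w_m$. Integrating the upper bound in~(2.1) along the near-normal segments $[z_k,r_\eta]$ and $[r_\eta,w_m]$ yields $K_\Omega(z_k,r_\eta)\leq -\tfrac{1}{2}\log\delta_\Omega(z_k)+\tfrac{1}{2}\log\eta+O(1)$, and symmetrically for $K_\Omega(r_\eta,w_m)$. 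Summing and forming the Gromov product, the leading $-\tfrac{1}{2}\log\delta_\Omega$ terms cancel, leaving
\[
(z_k \mid w_m)_o \;\geq\; -\tfrac{1}{2}\log\eta + O(1).
\]
Choosing $\eta=\eta(k,m)\to 0$ subject to $\eta\gg\max(\delta_\Omega(z_k),\delta_\Omega(w_m))$ (for instance $\eta=\max(\delta_\Omega(z_k),\delta_\Omega(w_m))^{1/2}$) forces $(z_k\mid w_m)_o\to\infty$.

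For part~(2), I argue contrapositively: assume $T_x^{\mathbb{C}}\partial\Omega\neq T_y^{\mathbb{C}}\partial\Omega$ and show that $(z_k\mid w_m)_o$ is bounded. Pass to a subsequence such that $(z_k-w_m)/|z_k-w_m|$ converges to a unit vector $v$. Since the two complex tangent hyperplanes differ, $v$ is transverse to at least one of them; without loss of generality $v\notin T_x^{\mathbb{C}}\partial\Omega$. Then the boundary exit point $\xi_{k,m}\in\partial\Omega\cap(z_k+\mathbb{C}(z_k-w_m))$ on the side of $z_k$ opposite $w_m$ satisfies $|z_k-\xi_{k,m}|\leq C\,\delta_\Omega(z_k)$ by transversality, while $|w_m-\xi_{k,m}|$ stays bounded below. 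Inequality~(2.2) then yields $K_\Omega(z_k,w_m)\geq -\tfrac{1}{2}\log\delta_\Omega(z_k)+O(1)$. Combined with Mercer's matching upper bound $K_\Omega(z_k,o)\leq -\tfrac{1}{2}\log\delta_\Omega(z_k)+O(1)$ and a symmetric treatment at $w_m$, this gives $K_\Omega(z_k,o)+K_\Omega(o,w_m)-K_\Omega(z_k,w_m)\leq O(1)$, so the Gromov product is bounded.

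The main obstacle is the quantitative cancellation in part~(1): a naive triangle-inequality argument only shows that $(z_k\mid w_m)_o$ is bounded below by a constant, not that it diverges. The remedy is the introduction of the routing point $r_\eta$ with $\eta$ a carefully chosen decreasing function of $\delta_\Omega(z_k),\delta_\Omega(w_m)$. Verifying that the $-\tfrac{1}{2}\log\delta_\Omega$ cancellation is sharp up to additive constants is what requires the full strength of $C^{1,\epsilon}$ regularity, as it forces the leading coefficient $\tfrac{1}{2}$ to coincide in both the lower bound~(2.2) and the integral upper bound.
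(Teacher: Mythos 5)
This statement is quoted from Zimmer's work (Theorem 4.1 of \cite{Zimmer2017The}) and the paper gives no proof of it, so I can only measure your argument against the standard one. For part (1) the standard route is much shorter than yours: Lemma 3.10 of this paper (the Nikolov--Pflug--Thomas estimate $K_\Omega(p,q)\leq\log\bigl(1+2|p-q|/\sqrt{\delta_\Omega(p)\delta_\Omega(q)}\bigr)$, valid near a Dini-smooth, hence $C^{1,\epsilon}$, boundary point) applied directly to the pair $(z_k,w_m)$, together with the supporting-hyperplane lower bound $K_\Omega(o,z)\geq\tfrac12\log(1/\delta_\Omega(z))-C$, gives $2(z_k|w_m)_o\geq\log\bigl(\sqrt{\delta_\Omega(z_k)\delta_\Omega(w_m)}+2|z_k-w_m|\bigr)^{-1}-C\to\infty$; no routing point is needed. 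Your routing-point version has a genuine problem as written: you take $r_\eta$ on the normal at $x$ with $\eta=\max(\delta_\Omega(z_k),\delta_\Omega(w_m))^{1/2}$, but $z_k$ may approach $x$ tangentially with $|z_k-x|\gg\eta$, and then the segment $[z_k,r_\eta]$ is nearly tangential and the integral of $|v|/\delta_\Omega$ along it is of order $(|z_k-r_\eta|/\eta)\log(\eta/\delta_\Omega(z_k))$, not $\tfrac12\log(\eta/\delta_\Omega(z_k))+O(1)$. The tangential displacement must be paid for, which forces $\eta\gtrsim|z_k-x|+|w_m-x|$; your stated choice of $\eta$ does not guarantee this. (Also, the coefficient $\tfrac12$ in the upper bound comes from Lemma 3.10, not from Mercer's Proposition 2.5, whose constant $\beta$ is not $\tfrac12$ in general.)

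The more serious gap is in part (2). To bound the Gromov product you need $K_\Omega(z_k,w_m)\geq\tfrac12\log(1/\delta_\Omega(z_k))+\tfrac12\log(1/\delta_\Omega(w_m))-O(1)$, i.e.\ \emph{both} logarithms must appear in the lower bound so that they cancel against $K_\Omega(z_k,o)+K_\Omega(o,w_m)$. A single application of inequality (3) with the exit point beyond $z_k$ yields only $K_\Omega(z_k,w_m)\geq\tfrac12\log(1/\delta_\Omega(z_k))-O(1)$, and the ``symmetric treatment at $w_m$'' yields only the other bound separately; together they give a lower bound by the \emph{maximum} of the two logarithms, not their sum, so your conclusion $(z_k|w_m)_o\leq O(1)$ does not follow --- what remains is $\min\bigl(\tfrac12\log(1/\delta_\Omega(z_k)),\tfrac12\log(1/\delta_\Omega(w_m))\bigr)+O(1)$, which is unbounded. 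Closing this requires a genuinely two-sided estimate (in Zimmer's argument, a lemma showing that when $T_x^{\mathbb{C}}\partial\Omega\neq T_y^{\mathbb{C}}\partial\Omega$ one can insert an intermediate point in a fixed compact set, or equivalently a wedge/two-supporting-hyperplanes comparison), and this is precisely where the hypothesis on the complex tangent hyperplanes does its work. Two smaller cautions: $T_x^{\mathbb{C}}\partial\Omega\neq T_y^{\mathbb{C}}\partial\Omega$ does imply $x-y\notin T_x^{\mathbb{C}}\partial\Omega-x$ (and symmetrically at $y$), but it does not imply transversality to the \emph{real} tangent hyperplane --- the boundary may contain the segment $[x,y]$ --- so the exit point of the real ray beyond $z_k$ need not lie within $C\delta_\Omega(z_k)$; one must work with $\delta_\Omega(z_k;v)$ in the complex line, using the direction $iv$ when $v$ lies in the real tangent space.
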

\subsection{Squeezing Function}
Given a domain $\Omega \subset \mathbb{C}^{n}$, let $s_{\Omega} :
\Omega \rightarrow(0,1]$ be the {\it squeezing function} on $\Omega,$ defined by
\begin{align*} s_{\Omega}(z)=& \text { sup }\{r| \text { there exists a 1-1 holomorphic map }\\ & f : \Omega \rightarrow \mathbb{B}_{d}(0 ; 1) \text { with } f(z)=0 \text { and } \mathbb{B}_{d}(0 ; r) \subset f(\Omega) \}. \end{align*} See \cite{Deng2011On, Yeung2009Geometry} for the details.
\begin{rmk}
From Bers Embedding Theorem, it follows that the squeezing function of the Teichm\"{u}ller space has a uniform positive bound from below.
\end{rmk}
\begin{lemma}[Yeung\cite{Yeung2009Geometry}]\label{squeezing}
Let $\Omega \subset \mathbb{C}^{n}$ be a bounded domain. If the squeezing function
$s_{\Omega}(\xi)>s,$ then the Kobayashi metric, Bergman metric and $K\ddot{a}hler$-Einstein metric are bilipschitz on $\{z \in \Omega : K_{\Omega}\left(z, z_{0}\right) \leq \epsilon\}$, and there exists $c>0, \:\epsilon>0$ such that
$$Vol \left(\left\{z \in \Omega : K_{\Omega}\left(z, z_{0}\right) \leq \epsilon\right\}\right) \geq \frac{\epsilon^{2 n}}{c},$$
where $Vol$ denotes the volume with respect to either the Bergman metric, the $K\ddot{a}hler$-Einstein metric or the Kobayashi-Eisenman metric.
\end{lemma}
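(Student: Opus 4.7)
The plan is to use the $1$-$1$ holomorphic map $f:\Omega\to\mathbb{B}_d(0;1)$ furnished by the squeezing hypothesis (with $f(z_0)=0$ and $\mathbb{B}_d(0;s)\subset f(\Omega)$) as a bridge: every invariant object on $\Omega$ gets sandwiched between the corresponding explicit objects on the two concentric balls $\mathbb{B}_d(0;s)$ and $\mathbb{B}_d(0;1)$, where each invariant metric is a known universal multiple of the Poincar\'e--Bergman metric, and all the relevant invariant volume forms are comparable to the Lebesgue form with constants depending only on $s$.

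First I would handle the three metrics. For the Kobayashi metric, applying the distance-decreasing property to the inclusions $\mathbb{B}_d(0;s)\hookrightarrow f(\Omega)\hookrightarrow\mathbb{B}_d(0;1)$ yields
$$k_{\mathbb{B}_d(0;1)}(w;v)\;\leq\;k_{f(\Omega)}(w;v)\;=\;k_\Omega\bigl(f^{-1}(w);df^{-1}(v)\bigr)\;\leq\;k_{\mathbb{B}_d(0;s)}(w;v)$$
for $w\in\mathbb{B}_d(0;s)$ and $v\in\mathbb{C}^n$. The outer metrics are the standard Poincar\'e--Bergman metrics of balls, mutually bilipschitz on $\overline{\mathbb{B}_d(0;s/2)}$ with constants depending only on $s$. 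For the Bergman metric I would invoke the analogous monotonicity of the on-diagonal Bergman kernel under set inclusion, producing the same two-sided sandwich on $\overline{\mathbb{B}_d(0;s/2)}$. For the complete K\"ahler--Einstein metric, I would cite Cheng--Yau / Mok--Yau existence on bounded pseudoconvex domains and apply Yau's Schwarz lemma across each of the two holomorphic inclusions to obtain the corresponding sandwich. Pulling back by $f$, all three metrics on $\Omega$ are mutually bilipschitz on $f^{-1}(\mathbb{B}_d(0;s/2))$, with constants depending only on $s$.

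Next I would localize to a Kobayashi ball. Integrating the pointwise lower bound $k_\Omega\geq k_{\mathbb{B}_d(0;1)}\circ f$ along curves shows that, for some $\epsilon=\epsilon(s)>0$, one has $\{z\in\Omega:K_\Omega(z,z_0)\leq\epsilon\}\subset f^{-1}(\mathbb{B}_d(0;s/2))$, which finishes the bilipschitz claim. For the volume lower bound I would dualize: the pointwise upper bound $k_\Omega\leq k_{\mathbb{B}_d(0;s)}\circ f$ forces $\{K_\Omega(\cdot,z_0)\leq\epsilon\}$ to \emph{contain} the $f$-preimage of a Euclidean ball of radius $\asymp s\tanh\epsilon\asymp s\epsilon$ around the origin. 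On that Euclidean ball the Bergman, K\"ahler--Einstein, and Kobayashi--Eisenman volume forms are each bounded below by a uniform positive multiple of Lebesgue measure (constant depending only on $s$, using the explicit formulas on $\mathbb{B}_d(0;s)$ together with the inclusion monotonicity noted above), so integration produces a lower bound of order $\epsilon^{2n}/c$ for each of the three volumes.

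The main obstacle I anticipate is the two-sided K\"ahler--Einstein comparison: one must rely on Cheng--Yau existence of the complete KE metric on a bounded pseudoconvex domain and then apply Yau's Schwarz lemma once across each inclusion $\mathbb{B}_d(0;s)\hookrightarrow f(\Omega)$ and $f(\Omega)\hookrightarrow\mathbb{B}_d(0;1)$ to produce both sides of the sandwich. The Kobayashi and Bergman comparisons, and the resulting volume bounds, are routine once the standard monotonicity results are in hand.
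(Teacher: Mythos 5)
The paper does not actually prove this lemma: it is quoted from Yeung's work on domains with the uniform squeezing property, so there is no in-paper argument to compare against. Your outline reconstructs the strategy of that source --- use the squeezing map $f$ to sandwich $f(\Omega)$ between $\mathbb{B}_d(0;s)$ and $\mathbb{B}_d(0;1)$, compare each invariant metric with the explicit ball metrics on a fixed smaller ball, and obtain the volume bound by integrating over a Euclidean ball of radius comparable to $s\epsilon$ contained in the Kobayashi ball. The Kobayashi comparison and the final volume estimate are correct as written.

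Two of your intermediate steps, however, would not go through in the form you state them. First, the Bergman \emph{metric} is not monotone under inclusion; only the on-diagonal kernel is, and the metric is a second logarithmic derivative of the kernel, which kernel monotonicity does not control. To get the two-sided bound you need the extremal characterization $b_\Omega(z;v)^2=I_1(z;v)/I_0(z)$, where $I_0(z)=\sup\{|h(z)|^2:\|h\|_{L^2(\Omega)}\le 1\}$ and $I_1(z;v)=\sup\{|dh(z)v|^2:\|h\|_{L^2(\Omega)}\le 1,\ h(z)=0\}$: both quantities are monotone in the same direction under inclusion, so sandwiching numerator and denominator separately between their values on the two balls gives the bilipschitz estimate. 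Second, for the K\"ahler--Einstein metric, applying the metric form of Yau's Schwarz lemma to the inclusion $\mathbb{B}_d(0;s)\hookrightarrow f(\Omega)$ requires an a priori negative upper bound on the holomorphic bisectional curvature of the KE metric of $f(\Omega)$, which is not available. The standard repair is to obtain only the lower bound $g^{KE}_{f(\Omega)}\ge c\,g^{KE}_{\mathbb{B}_d(0;1)}$ from the inclusion into the unit ball (whose curvature is pinched negative), then use the volume-form version of the Schwarz lemma (which needs only Ricci bounds, available since both KE metrics satisfy $\mathrm{Ric}=-(n+1)g$) to compare determinants, and finally combine the determinant upper bound with the eigenvalue lower bound to bound the metric above. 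You should also record that existence and completeness of the KE metric on $\Omega$ requires pseudoconvexity, which is not in the hypothesis as stated but is forced by positivity of the squeezing function; this is the setting in which the lemma is applied to the Teichm\"uller space.
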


\section{Localization of the Kobayashi distance }
\noindent F. Forstneri$\check{c}$ and J. P. Rosay \cite{Forstneric1987Localization} gave a local estimate of the Kobayashi metric near a local peak point under some growth condition for the peak function. Then Zimmer generalized the result to convex domain of finite type in \cite{Zimmer2016Gromov}. By adopting an analogous method as in \cite{Forstneric1987Localization,Zimmer2016Gromov}, we will give a local estimation of the Kobayashi metric in log-type convex domains.
\begin{lemma}\label{Lemma1}
Suppose that $\Omega \subset \mathbb{C}^n$ is a bounded domain and $\Omega\cap U$ is log-type convex. Then for any $\eta>0$, there exists $\alpha>0, \:\tau>0$ such that: if $\xi\in \partial\Omega\cap U$, $B_{\tau}(\xi)\subset U$ and $\varphi:\mathbb{D}\rightarrow B_{\tau}(\xi)\cap\Omega$, and
$$|\zeta|\leq1-\frac{\alpha}{|\log\delta_{\Omega}(\varphi(0))|^{1+\nu}},$$
then we have
$$|\varphi(\zeta)-\varphi(0)|\leq\eta.$$
\end{lemma}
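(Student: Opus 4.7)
The plan is to combine the convex-domain Kobayashi lower bound with the log-type convex hypothesis to obtain a pointwise upper bound on $|\varphi'|$, and then integrate this bound along a radial segment of $\mathbb{D}$, using a dyadic bootstrap on $\delta_{\Omega\cap U}\circ\varphi$.

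Since $B_\tau(\xi)\subset U$, the set $B_\tau(\xi)\cap\Omega$ coincides with $B_\tau(\xi)\cap(\Omega\cap U)$ and is therefore convex, being the intersection of two convex sets. Applying Proposition 2.2 to this convex set and using $\delta_{B_\tau(\xi)\cap\Omega}(z;v)\leq\delta_{\Omega\cap U}(z;v)$ together with the log-type convex hypothesis gives $k_{B_\tau(\xi)\cap\Omega}(z;v)\geq |v|\,|\log\delta_{\Omega\cap U}(z)|^{1+\nu}/(2C)$. Since $\varphi$ is distance-decreasing for the Kobayashi pseudometric, this yields the key pointwise bound
$$|\varphi'(\zeta)| \;\leq\; \frac{2C}{(1-|\zeta|^2)\,|\log\delta_{\Omega\cap U}(\varphi(\zeta))|^{1+\nu}},\qquad \zeta\in\mathbb{D}.$$

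Parametrize the radial segment $\gamma(t):=\varphi(te^{i\arg\zeta})$ for $t\in[0,|\zeta|]$, and write $\delta_0:=\delta_{\Omega\cap U}(\varphi(0))$, $V_0:=|\log\delta_0|$. The 1-Lipschitz property of the boundary distance gives $\delta_{\Omega\cap U}(\gamma(t))\leq\delta_0+|\gamma(t)-\gamma(0)|$; thus if $|\gamma(t)-\gamma(0)|$ is controlled, so is $|\log\delta_{\Omega\cap U}(\gamma(t))|$ from below. This motivates a dyadic bootstrap: set $T_k:=\inf\{t\in[0,|\zeta|]\,:\,|\gamma(s)-\gamma(0)|\geq 2^k\delta_0$ for some $s\leq t\}$. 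Then on the $k$-th layer $[T_{k-1},T_k]$ one has $|\log\delta_{\Omega\cap U}(\gamma(s))|\geq V_0-(k+1)\log 2$, so combining the pointwise bound on $|\varphi'|$ with the reverse triangle inequality $|\gamma(T_k)-\gamma(T_{k-1})|\geq 2^{k-1}\delta_0$ yields
$$\int_{T_{k-1}}^{T_k}\frac{ds}{1-s^2}\;\geq\;\frac{2^{k-2}\,\delta_0\,\bigl(V_0-(k+1)\log 2\bigr)^{1+\nu}}{C}.$$

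Summing over the bootstrap range $(k+1)\log 2\leq V_0/2$ (where $V_0-(k+1)\log 2\geq V_0/2$) and using the hypothesis $|\zeta|\leq 1-\alpha/V_0^{1+\nu}$ (which bounds $\tfrac12\log\tfrac{1+|\zeta|}{1-|\zeta|}\leq\tfrac12\log(2V_0^{1+\nu}/\alpha)=O(\log V_0)$) and inverting gives
$$|\varphi(\zeta)-\varphi(0)|\;\leq\;2^{K_0+1}\delta_0\;\leq\; C'\,\frac{\log V_0}{V_0^{1+\nu}}\;=\;C'\,\frac{\log|\log\delta_0|}{|\log\delta_0|^{1+\nu}},$$
where $K_0$ is the largest dyadic index with $T_{K_0}\leq|\zeta|$. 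This bound tends to $0$ as $\delta_0\to 0$, so it is $\leq\eta$ provided $\delta_0$ is below a threshold depending on $\eta$ and $\alpha$. For $\delta_0$ bounded below, the trivial diameter bound $|\varphi(\zeta)-\varphi(0)|\leq 2\tau$ suffices, and choosing $\alpha$ and $\tau$ appropriately in terms of $\eta$ combines the two regimes to yield the lemma.

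The principal technical obstacle is handling the dyadic layers beyond the bootstrap range $(k+1)\log 2>V_0/2$: there the factor $V_0-(k+1)\log 2$ is no longer comparable to $V_0$, and the per-layer lower bound on $\int ds/(1-s^2)$ degrades. One has to argue that by this stage the cumulative radial displacement is already bounded by the displayed expression, so that the remaining layers add only a controlled amount to be absorbed by taking $\tau$ small relative to $\eta$.
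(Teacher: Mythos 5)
Your opening reduction (convexity of $B_\tau(\xi)\cap\Omega$, the pointwise bound $|\varphi'(\zeta)|\le 2C\big((1-|\zeta|^2)|\log\delta_{\Omega\cap U}(\varphi(\zeta))|^{1+\nu}\big)^{-1}$ via Schwarz--Pick) is correct, but the integration step has a genuine gap, and it is exactly the one you flag at the end. The dyadic bootstrap only controls $|\log\delta_{\Omega\cap U}(\gamma(t))|$ from below by $V_0-(k+1)\log 2$ while the displacement stays below $2^k\delta_0$; within the range $(k+1)\log 2\le V_0/2$ the displacement is therefore automatically at most $2^{k}\delta_0\le\delta_0^{1/2}$, so the bound you display, $C'\log V_0/V_0^{1+\nu}$, is vacuously true there and carries no information. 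The content-bearing regime is precisely the one beyond the bootstrap, and there the method cannot close: once the displacement $D$ exceeds $\delta_0$, the best available lower bound on $|\log\delta_{\Omega\cap U}(\gamma(t))|$ is $\approx\log(1/D)$, which for $D$ of order $\eta$ is a fixed constant. The hyperbolic length budget is $\tfrac12\log\tfrac{2}{1-|\zeta|}\le\tfrac12\log\tfrac{2V_0^{1+\nu}}{\alpha}\sim\tfrac{1+\nu}{2}\log V_0\to\infty$ as $\delta_0\to 0$, while the hyperbolic cost of Euclidean displacement $dD$ at level $D$ is only about $(\log\tfrac1D)^{1+\nu}\,dD$; integrating, your argument permits $|\varphi(\zeta)-\varphi(0)|$ as large as a constant times $\log|\log\delta_0|$, which is not $\le\eta$ uniformly. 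The only escape you offer is the diameter bound $2\tau\le\eta$, but that proves the literal statement trivially and destroys the way the lemma is used in Theorem \ref{local 3}, where $\eta$ is fixed by $U$ first and the conclusion $|\varphi(\zeta)-\varphi(0)|\le\eta/2$ must be an actual improvement over the a priori bound $\eta$.

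The reason the loss occurs is that you convert $\delta_{\Omega\cap U}(\cdot\,;v)\le C|\log\delta_{\Omega\cap U}(\cdot)|^{-(1+\nu)}$ at the \emph{moving} point before integrating. The paper instead applies the two-point lower bound for convex domains (inequality (3) of Proposition 2.2) anchored at the \emph{base} point: with $R=|\varphi(\zeta)-\varphi(0)|$ and $\delta=\delta_{\Omega\cap U}(\varphi(0);v)$ for $v$ in the direction of $\varphi(\zeta)-\varphi(0)$, one has $K_{\Omega\cap U}(\varphi(0),\varphi(\zeta))\ge\tfrac12\log\tfrac{R-\delta}{\delta}$, hence $R\le\bigl(\tfrac{2}{1-|\zeta|}+1\bigr)\delta$. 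Here $\delta\le C|\log\delta_0|^{-(1+\nu)}$ contributes a term $+(1+\nu)\log|\log\delta_0|$ inside the logarithmic lower bound that cancels \emph{exactly} against the same term in $\tfrac12\log\tfrac{2}{1-|\zeta|}\le\tfrac12\log\tfrac{2|\log\delta_0|^{1+\nu}}{\alpha}$, leaving $R\le 3C/\alpha$, i.e.\ $\alpha$ of order $1/\eta$ independent of $\delta_0$. (A separate trivial case handles $R<2\delta$, using smallness of $\tau$.) To repair your proof you should replace the path integral of the infinitesimal metric by this two-point estimate; as written, the approach does not yield the lemma in the form needed.
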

\begin{proof}
Let $\displaystyle{\alpha=\frac{3}{\eta}}$.
From
$$1-\frac{\alpha}{|\log\delta_{\Omega}(\varphi(0))|^{1+\nu}}>0,$$
it follows that
$$|\log\delta_{\Omega}(\varphi(0))|>{\alpha}^{\frac{1}{1+\nu}}.$$
Select $v\in\mathbb{C}^n$ such that $\varphi(\zeta)\in\varphi(0)+\mathbb{C}v$.

Firstly, supposing that $$|\varphi(\zeta)-\varphi(0)|\geq 2\delta_{\Omega\cap U}(\varphi(0);v),$$  we obtain
\begin{equation}\label{A}
K_{\Omega\cap U}(\varphi(0),\varphi(\zeta))\geq\frac{1}{2}\log\frac{\left||\varphi(\zeta)-\varphi(0)|-\delta_{\Omega\cap U}(\varphi(0);v)\right|}{\delta_{\Omega\cap U}(\varphi(0);v)}.
\end{equation}
Moreover, by the decreasing property of the Kobayashi metric, it follows that
\begin{equation}\label{B}
K_{\Omega\cap U}(\varphi(\zeta),\varphi(0))\leq d_{\mathbb{D}}(0,\zeta)=\frac{1}{2}\log\frac{1+|\zeta|}{1-|\zeta|}\leq\frac{1}{2}\log\frac{2}{1-|\zeta|}.
\end{equation}
Application of the previous two inequalities (\ref{A}) and (\ref{B}) now gives
\begin{align*}
|\varphi(\zeta)-\varphi(0)|&\leq\left(\frac{2}{1-|\zeta|}+1\right)\delta_{\Omega\cap U}(\varphi(0);v)
\leq\left(\frac{2}{1-|\zeta|}+1\right)\frac{1}{|\log\delta_{\Omega\cap U}(\varphi(0))|^{1+\nu}}\\
&\leq\left(\frac{2}{1-|\zeta|}+1\right)\frac{1}{|\log\delta_{\Omega}(\varphi(0))|^{1+\nu}}
\leq\frac{2}{\alpha}+\frac{1}{|\log\delta_{\Omega}(\varphi(0))|^{1+\nu}}\\
&\leq\frac{3}{\alpha}=\eta.
\end{align*}

Now, if $$|\varphi(\zeta)-\varphi(0)|< 2\delta_{\Omega\cap U}(\varphi(0);v).$$ then by Definition \ref{logconvex}, there exists $C,\nu>0$ and $$\delta_{\Omega\cap U}(\varphi(0);v)<\frac{C}{|\log\delta_{\Omega\cap U}(\varphi(0))|^{1+\nu}}.$$
We can choose $\tau$ small enough such that $\forall z\in B_{\tau}(\xi)\cap \Omega$,
$$\frac{C}{|\log\delta_{\Omega\cap U}(z)|^{1+\nu}}<\frac{1}{2}\eta.$$
Thus $$|\varphi(\zeta)-\varphi(0)|<\eta,$$
which completes the proof.
\end{proof}

\begin{thm}\label{local 3}
Suppose that $\Omega \subset \mathbb{C}^n$ is a bounded domain. If $\Omega\cap U$ is log-type convex. then there exist $c>0$ and $\epsilon>0$ such that
$$k_{\Omega}(p;v)\leq k_{\Omega\cap U}(p;v)\leq e^{c|\log\delta_{\Omega}(\varphi(0))|^{-(1+\nu)}} k_{\Omega}(p;v)$$
for $\xi\in\partial\Omega\cap U$ and $p\in B_{\epsilon}(\xi)\cap \Omega$.
\end{thm}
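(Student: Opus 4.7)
The lower inequality is immediate from the distance-decreasing property of the Kobayashi metric under the holomorphic inclusion $\Omega \cap U \hookrightarrow \Omega$, so all the content lies in the upper bound. My plan is to take an arbitrary competitor $\varphi \in \operatorname{Hol}(\mathbb{D}, \Omega)$ with $\varphi(0) = p$ for the infimum defining $k_\Omega(p;v)$, and to conformally extract from it a competitor $F \in \operatorname{Hol}(\mathbb{D}, \Omega\cap U)$ with $F(0) = p$ whose derivative at the origin satisfies $|F'(0)| \ge e^{-c|\log\delta_\Omega(p)|^{-(1+\nu)}}|\varphi'(0)|$.

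Fix a target $\eta>0$ (later sized as a small constant times the distance from $\xi$ to $\partial U$), and let $\alpha, \tau > 0$ be the constants produced by Lemma \ref{Lemma1} for this $\eta$. Shrink $\epsilon$ so that $B_{2\epsilon}(\xi) \subset B_\tau(\xi) \subset U$ and so that $\eta + \epsilon < \tau$. For $\varphi:\mathbb{D} \to \Omega$ with $\varphi(0) = p \in B_\epsilon(\xi)\cap\Omega$, let $S_0 \subset \mathbb{D}$ denote the connected component of $\varphi^{-1}(B_\tau(\xi))$ containing $0$, and let $\psi : \mathbb{D} \to S_0$ be the Riemann map with $\psi(0)=0$, $\psi'(0)>0$. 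Then $\tilde\varphi := \varphi\circ\psi : \mathbb{D} \to B_\tau(\xi)\cap\Omega$ satisfies the hypotheses of Lemma \ref{Lemma1}, which yields
$$\tilde\varphi(r_p\mathbb{D}) \subset B_\eta(p) \cap \Omega \subset B_\tau(\xi)\cap \Omega \subset \Omega \cap U, \qquad r_p := 1 - \alpha|\log\delta_\Omega(p)|^{-(1+\nu)}.$$
Setting $F(\zeta):=\tilde\varphi(r_p\zeta)$ gives a holomorphic $F:\mathbb{D}\to\Omega\cap U$ with $F(0)=p$ and $F'(0) = r_p\,\psi'(0)\,\varphi'(0)$, so
$$k_{\Omega\cap U}(p;v) \le \frac{|v|}{r_p\,|\psi'(0)|\,|\varphi'(0)|}.$$

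The rescaling factor obeys $1/r_p \le e^{2\alpha|\log\delta_\Omega(p)|^{-(1+\nu)}}$ for $\delta_\Omega(p)$ small, which is already of the required form, so the main obstacle is matching this with a lower bound on $|\psi'(0)|$ of the same order. A crude Koebe–Schwarz argument gives $|\psi'(0)| \ge \inf_{\zeta^*\in\partial S_0 \cap\mathbb{D}}|\zeta^*|$; since any such $\zeta^*$ satisfies $\varphi(\zeta^*)\in\partial B_\tau(\xi)$ and therefore $|\varphi(\zeta^*)-p|\ge \tau - \epsilon$, the Schwarz lemma for the bounded holomorphic map $\varphi - p : \mathbb{D}\to \mathbb{C}^n$ of norm at most $\operatorname{diam}(\Omega)$ forces $|\zeta^*|\ge (\tau-\epsilon)/\operatorname{diam}(\Omega)$ and so only yields a fixed positive constant. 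To upgrade this to $|\psi'(0)| \ge 1 - O(|\log\delta_\Omega(p)|^{-(1+\nu)})$, I would iterate Lemma \ref{Lemma1}: the inclusion $\tilde\varphi(r_p\mathbb{D})\subset B_\eta(p)\subset B_\tau(\xi)$ forces $\psi(r_p\mathbb{D})$ to lie well inside $S_0$ and so pushes $\partial S_0\cap\mathbb{D}$ out into a thin shell near $\partial\mathbb{D}$ whose width (in the hyperbolic metric of $S_0$) is of order $|\log\delta_\Omega(p)|^{-(1+\nu)}$, which is precisely what forces $|\psi'(0)|$ within the same order of $1$ uniformly in $\varphi$. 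Taking the infimum over $\varphi$ then yields the theorem.
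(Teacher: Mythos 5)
The lower bound and the basic template of your argument --- restrict the competitor $\varphi$ to a sub-disc on which it stays inside $B_\tau(\xi)\cap\Omega$ and rescale --- do match the paper's proof. But the entire content of the theorem is the quantitative claim that this sub-disc can be taken of radius $1-O\bigl(|\log\delta_\Omega(p)|^{-(1+\nu)}\bigr)$, i.e.\ in your notation that $|\psi'(0)|\ge 1-O\bigl(|\log\delta_\Omega(p)|^{-(1+\nu)}\bigr)$, and this is exactly the step your proposal does not prove. You honestly note that Schwarz/Koebe only gives a fixed constant, but the proposed repair (``iterate Lemma \ref{Lemma1}'') is both unsubstantiated and, as described, circular: the observation that $\tilde\varphi(r_p\mathbb{D})\subset B_\eta(p)$ forces $\psi(r_p\mathbb{D})$ to lie well inside $S_0$ is automatic (it is the Riemann-map image of a sub-disc) and carries no information about where $\partial S_0\cap\mathbb{D}$ sits; nothing you have said excludes, say, $S_0=\tfrac12\mathbb{D}$, in which case $|\psi'(0)|=\tfrac12$ and your final estimate fails. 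To exclude that, you must show that $\varphi$ cannot travel Euclidean distance $\tau-\epsilon$ before radius $1-O\bigl(|\log\delta_\Omega(p)|^{-(1+\nu)}\bigr)$ --- which is precisely the escape-radius estimate the theorem amounts to. Moreover, Lemma \ref{Lemma1} applies only to maps whose \emph{entire} image lies in $B_\tau(\xi)\cap\Omega$, so a naive self-improvement (apply it to $\varphi$ restricted to its escape disc of radius $\rho$) only yields displacement at most $\eta/2$ up to radius $\rho-\alpha|\log\delta|^{-(1+\nu)}$, which returns the vacuous inequality $\rho\ge\rho-\alpha(\cdots)$ and no gain.

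The paper closes this gap with an analytic input absent from your proposal: Hadamard's three-circle theorem. Setting $M(r)=\log\sup_{|\zeta|=r}|\varphi(\zeta)-\varphi(0)|$ and normalizing $\mathrm{diam}(\Omega)\le 1$ so that $M\le 0$, the convexity of $M$ in $\log r$, together with $M\bigl(\rho-\alpha|\log\epsilon|^{-(1+\nu)}\bigr)\le\log(\eta/2)$ (from Lemma \ref{Lemma1}) and $M(\rho)=\log\eta$, pins down $\rho\ge\exp\bigl(-c|\log\epsilon|^{-(1+\nu)}\bigr)$ uniformly over all competitors. If you insert this three-circle argument in place of the heuristic about thin shells, your construction does go through (with $|\psi'(0)|\ge\mathrm{dist}(0,\partial S_0)\ge\rho$, so the Riemann-map detour is in fact unnecessary); without it, the proof is incomplete at its central step.
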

\begin{proof}
Assume there exists $\eta>0$ such that for any $p\in B_{\tau}(\xi)$, $B_{\eta}(p)\subset U$.
Define
$$\rho(\epsilon)=max\{r:\varphi\in \text{Hol}(\mathbb{D},\Omega), \varphi(0)\in B_{\epsilon}(\xi), |\varphi(0)-\varphi(\zeta)|\leq\eta \text{ for any } |\zeta|\leq r\}.$$
Then we only need to check that $\rho\geq e^{-c|\log{\epsilon}|^{-(1+\nu)}}$.

Scaling domain as necessary, we assume that $diam_{euc}(\Omega)\leq 1$. Then Schwarz Lemma implies that $\rho\geq \eta$. Now by Lemma \ref{Lemma1}, it follows that there exists $\alpha>0$ such that: if
$$ |\zeta|\leq \rho-\alpha|\log\epsilon|^{-(1+\nu)},$$
then $|\varphi(0)-\varphi(\zeta)|\leq\frac{\eta}{2}$.
If $\rho<1$, then there exists a holomorphic map $\varphi:\mathbb{D}\rightarrow\Omega$ such that $\varphi(0)\in B_{\epsilon}(\xi)$ and
$$\eta=\sup\limits_{|\zeta|=\rho}|\varphi(\zeta)-\varphi(0)|.$$
Hadamard's three circle lemma now gives that
$$M(r)=\log\sup\limits_{|\zeta|=r}|\varphi(\zeta)-\varphi(0)|$$
is a convex function of $\log(r)$. Noting $\rho-\alpha(\log\epsilon)^{-(1+\nu)}<1$, we then obtain
$$\frac{\log(\rho-\alpha|\log\epsilon|^{-(1+\nu)})}{\log\frac{\eta}{2}}\geq \frac{\log(\rho-\alpha|\log\epsilon|^{-(1+\nu)})}{M(\rho-\alpha|\log\epsilon|^{-(1+\nu)})}\geq\frac{\rho}{M(\rho)}=\frac{\rho}{\log\eta}.$$
Therefore
\begin{equation}\label{C}
\log(\rho-\alpha|\log\epsilon|^{-(1+\nu)})\leq\frac{\log\frac{\eta}{2}}{\log\eta}\log\rho.
\end{equation}
Assuming $\epsilon\leq \exp\left(-(\frac{2\alpha}{\eta})^{\frac{1}{1+\nu}}\right)$, we have $\rho-\alpha|\log\epsilon|^{-(1+\nu)}\geq\frac{\eta}{2}$, which implies that
\begin{equation}\label{D}
\log\left(\rho-\alpha|\log\epsilon|^{-(1+\nu)}\right)\geq\log \rho-\frac{2}{\eta}\frac{\alpha}{|\log\epsilon|^{(1+\nu)}}.
\end{equation}
Combination of (\ref{C}) and (\ref{D}) now gives the desired result
$$\rho\geq \exp \left({-c|\log{\epsilon}|^{-(1+\nu)}}\right), $$
where $c=\frac{2\alpha\log\frac{1}{\eta}}{\eta\log2}$.
\end{proof}

\begin{lemma}\label{local}
Suppose that $\Omega \subset \mathbb{C}^n$ is a bounded domain and $\Omega\cap U$ is log-type convex. Then there exists $K>0$ with the following property:

If $\xi\in\partial\Omega\cap U$ and $\sigma:[a,b]\rightarrow\Omega$ is a $(1,\kappa)$-almost geodesic with $\sigma([a,b])\subset B_{\epsilon}(\xi) $ then, for all $s,t\in[a,b]$,
$$|t-s|\leq K_{\Omega\cap U}(\sigma(s),\sigma(t))\leq|t-s|+K.$$

\end{lemma}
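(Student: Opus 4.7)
My plan is to promote the infinitesimal comparison of Theorem~\ref{local 3} to an integrated length estimate along $\sigma$, and then bound the resulting error term using the log-type convex structure together with the efficiency of an almost-geodesic.

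The lower inequality is essentially automatic: $\Omega\cap U\subset\Omega$ forces $K_{\Omega\cap U}\geq K_\Omega$ by the distance-decreasing property of the Kobayashi pseudo-distance, and condition (1) of the almost-geodesic definition yields $K_\Omega(\sigma(s),\sigma(t))\geq|t-s|-\kappa$; the additive $\kappa$ is absorbed into $K$.

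For the upper inequality, set $g(p):=|\log\delta_\Omega(p)|^{-(1+\nu)}$. After shrinking $\epsilon$ so that $B_\epsilon(\xi)\subset U$ and $c\,g(p)\leq 1$ on $B_\epsilon(\xi)\cap\Omega$ (with $c$ the constant of Theorem~\ref{local 3}), the elementary inequality $e^x\leq 1+2x$ on $[0,1]$ applied to that theorem yields
\[
k_{\Omega\cap U}(p;v)\;\leq\;\bigl(1+2c\,g(p)\bigr)\,k_\Omega(p;v).
\]
Since $B_\epsilon(\xi)\subset U$, the curve $\sigma$ lies in $\Omega\cap U$; integrating the above along $\sigma$ and using $k_\Omega(\sigma(u);\sigma'(u))\leq 1$ a.e.\ (from condition (2) of the almost-geodesic definition) gives
\[
K_{\Omega\cap U}(\sigma(s),\sigma(t))\;\leq\;L_{\Omega\cap U}(\sigma|_{[s,t]})\;\leq\;(t-s)\;+\;2c\int_s^t g(\sigma(u))\,du.
\]
Thus the proof reduces to a uniform bound $\int_s^t g(\sigma(u))\,du\leq M$ with $M$ independent of $\sigma$, $s$, $t$.

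Establishing this uniform bound is the main obstacle, and is where both hypotheses must enter decisively. My plan is a dyadic decomposition in depth: for $j\geq\lceil\log_2(1/\epsilon)\rceil$, set $E_j:=\{u\in[s,t]:2^{-(j+1)}<\delta_\Omega(\sigma(u))\leq 2^{-j}\}$, on which $g(\sigma(u))\asymp j^{-(1+\nu)}$; since $\nu>0$, summability of the resulting series follows once $|E_j|$ is uniformly bounded in $j$. The defining log-type convex inequality $\delta_{\Omega\cap U}(p;v)\leq C|\log\delta_\Omega(p)|^{-(1+\nu)}$, the convex lower bound $k_{\Omega\cap U}(p;v)\geq|v|/(2\delta_{\Omega\cap U}(p;v))$, and Theorem~\ref{local 3} combine to give the Euclidean speed estimate $|\sigma'(u)|\leq C'\,g(\sigma(u))$ along $\sigma$. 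Matching this against the almost-geodesic constraint --- which prevents $\sigma$ from dwelling in a thin depth-layer without producing commensurate Kobayashi progress between its endpoints --- and the intrinsic Kobayashi geometry of $E_j$ should yield $|E_j|\leq M_0$ uniformly in $j$, completing the proof.
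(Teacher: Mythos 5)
Your setup is sound and coincides with the paper's: the lower bound is the distance-decreasing property plus condition (1), and the upper bound reduces, via Theorem~\ref{local 3} and the elementary inequality $e^{x}\leq 1+xe^{x}$ on $[0,1]$ together with $k_{\Omega}(\sigma;\sigma')\leq 1$, to showing $\int_{s}^{t}g(\sigma(u))\,du\leq M$ uniformly, where $g(p)=|\log\delta_{\Omega}(p)|^{-(1+\nu)}$. The gap is in the step you defer to the end: the uniform occupation bound $|E_{j}|\leq M_{0}$ is exactly the hard content of the lemma, and the tools you cite do not deliver it. The Euclidean speed estimate $|\sigma'(u)|\leq C'g(\sigma(u))$ only says the curve moves \emph{slowly} while in layer $j$, which if anything makes long dwell times easier, not harder. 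The almost-geodesic constraint paired with the convexity estimate of Proposition~\ref{mercer} gives, for $u_{1},u_{2}\in E_{j}$, only $|u_{1}-u_{2}|\leq K_{\Omega}(\sigma(u_{1}),\sigma(u_{2}))+\kappa\leq 2\alpha+\kappa+2\beta(j+1)\log 2$, i.e.\ $|E_{j}|\lesssim j$; and this order of magnitude cannot be improved by endpoint considerations alone, since two points of $\Omega\cap B_{\epsilon}(\xi)$ at depth $2^{-j}$ but at Euclidean distance $\asymp\epsilon$ have Kobayashi distance $\asymp j$, so the constraints a priori permit a sub-segment to spend time $\asymp j$ at depth $\asymp 2^{-j}$. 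With $|E_{j}|\lesssim j$ your series becomes $\sum_{j}j\cdot j^{-(1+\nu)}=\sum_{j}j^{-\nu}$, which diverges for every $\nu\leq 1$, so the dyadic scheme does not close as stated.

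The paper's proof avoids counting individual layers. Let $T\in[a,b]$ maximize $\delta_{\Omega}(\sigma(\cdot))$. Combining the almost-geodesic lower bound $|T-t|\leq K_{\Omega}(\sigma(T),\sigma(t))+\kappa$ with Proposition~\ref{mercer} through a fixed base point, and using $\delta_{\Omega}(\sigma(T))\geq\delta_{\Omega}(\sigma(t))$, one gets $|T-t|\leq(2\beta+1)\log\bigl(1/\delta_{\Omega}(\sigma(t))\bigr)-1$: the depth decays \emph{exponentially} in $|T-t|$, whence the pointwise bound $g(\sigma(t))\leq(2\beta+1)^{1+\nu}(|T-t|+1)^{-(1+\nu)}$, which is integrable over $\mathbb{R}$ precisely because $\nu>0$ and yields $K$ directly. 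In your dyadic language, what this gives (and what suffices) is the \emph{cumulative} bound $\sum_{i\leq j}|E_{i}|\lesssim j$ --- all layers of depth at most $2^{-j}$ are confined to an interval of length $O(j)$ centered at $T$ --- and summation by parts then recovers convergence of $\sum_{j}j^{-(1+\nu)}|E_{j}|$; the individual bound $|E_{j}|\leq M_{0}$ is neither established by your argument nor needed. To repair the proof, replace the claimed uniform layer bound by this centered decay estimate.
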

\begin{proof}
The left side is obvious. To prove the right side, let $T\in[a,b]$ satisfy
$$\delta_{\Omega}(\sigma(T))=\max\{\delta_{\Omega}(\sigma(t)):t\in[a,b]\}.$$
Fix $z_0\in\Omega$. By Proposition \ref{mercer}, there exists $\alpha,\beta>0$ and if $\sigma|_{[a,b]}$ is near the boundary, we have
\begin{align}
|T-t|&=K_{\Omega}(\sigma(T),\sigma(t))+\kappa\leq K_{\Omega}(\sigma(T),z_0)+K_{\Omega}(z_0,\sigma(t))+\kappa\notag\\
&\leq 2\alpha+\kappa+2\beta \log\frac{1}{(\delta_{\Omega}(\sigma(t))\delta_{\Omega}(\sigma(T)))^{1/2}}\\
&\leq (2\beta +1) \log\frac{1}{(\delta_{\Omega}(\sigma(t))\delta_{\Omega}(\sigma(T)))^{1/2}}-1,\notag
\end{align}
which implies that
$$\delta_{\Omega}(\sigma(t))\leq (\delta_{\Omega}(\sigma(t))\delta_{\Omega}(\sigma(T)))^{1/2}\leq e^{\frac{-1}{2\beta  +1}(|T-t|+1)}.$$
It follows immediately from Theorem \ref{local 3} that
\begin{align*}
K_{\Omega\cap U}(\sigma(s),\sigma(t))&\leq \int_s^t k_{\Omega\cap U}(\sigma(r);\sigma'(r))dr\leq \int_s^t e^{c|\log\delta_{\Omega}(\sigma(r))|^{-(1+\nu)}}k_{\Omega}(\sigma(r);\sigma'(r))dr\\
&\leq \int_s^t e^{c|\log\delta_{\Omega}(\sigma(r))|^{-(1+\nu)}}dr\leq \int_s^t e^{\frac{c(2\beta  +1)^{(1+\nu)}}{(|T-t|+1)^{(1+\nu)}}}dr\\
&\leq \int_{[s,t]\cap[T-1,T+1]} e^{\frac{c(2\beta  +1)^{(1+\nu)}}{(|T-t|+1)^{(1+\nu)}}}dr+\int_{[s,t]\cap[T-1,T+1]^c} e^{\frac{c(2\beta  +1)^{(1+\nu)}}{(|T-t|+1)^{(1+\nu)}}}dr\\
&\leq2 e^{c(2\beta  +1)^{(1+\nu)}}+\int_{[s,t]\cap[T-1,T+1]^c} e^{\frac{c(2\beta  +1)^{(1+\nu)}}{(|T-t|+1)^{(1+\nu)}}}dr.
\end{align*}
Notice that, for $\lambda\in[0,1]$,
$$e^{C\lambda}=1+\int_{0}^{\lambda}Ce^{Cs}ds\leq 1+\int_0^{\lambda}Ce^C ds\leq 1+Ce^C\lambda.$$
Thus, by setting $C=c(2\beta  +1)^2$, we obtain
$$\exp \left({\frac{c(2\beta  +1)^{(1+\nu)}}{(|T-t|+1)^{(1+\nu)}}}\right)\leq1+\frac{Ce^C}{(|T-t|+1)^{(1+\nu)}},$$
which implies that
\begin{align*}
& \int_{[s,t]\cap[T-1,T+1]^c}\exp \left({\frac{c(2\beta  +1)^{(1+\nu)}}{(|T-t|+1)^{(1+\nu)}}}\right)dr\\
\leq& \int_{[s,t]\cap[T-1,T+1]^c}1+\frac{Ce^C}{(|T-t|+1)^{(1+\nu)}}ds\\
\leq& |t-s|+\int_1^{\infty}\frac{Ce^C}{(r+1)^{(1+\nu)}}dr.
\end{align*}
Therefore, we have the desired result
$$K_{\Omega\cap U}(\sigma(s),\sigma(t))\leq|t-s|+K,$$
where 
$$K=2 \exp\left({c(2\beta  +1)^{(1+\nu)}}\right)+\int_1^{\infty}\frac{2c(2\beta  +1)^{(1+\nu)}}{(r+1)^{(1+\nu)}}e^{c(2\beta  +1)^{(1+\nu)}}dr.
$$

The proof is complete.
\end{proof}

Recall that $\Omega$ is a bounded domain and $\Omega\cap U$ is log-type convex. Then $(\Omega,K_{\Omega})$ has the following local visible property. Note that Bharali and Zimmer \cite{Bharali2017Goldilocks} proved the property for all Goldilocks domains. For
the sake of completeness, we present their proof here.
\begin{lemma}\label{visibility}
For any $\xi\neq\xi'\in\partial\Omega\cap U$, there exists $\epsilon>0$ and a compact set $A\subset \Omega\cap U$ with the following property:

For any $z\in\Omega\cap B_{\epsilon}(\xi)$, $\omega\in\Omega\cap B_{\epsilon}(\xi')$ and a $(1,\kappa)$-almost geodesic $\sigma:[0,T]\rightarrow\Omega$ joining $z$ and $\omega$, then $\sigma\cap A\neq\emptyset$.
\end{lemma}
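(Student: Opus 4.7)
The plan is to argue by contradiction. Suppose no such compact $A$ exists for the given pair $\xi \neq \xi'$. Then by exhausting $\Omega \cap U$ by compact subsets, one can find sequences $z_n \in \Omega$ with $z_n \to \xi$, $\omega_n \in \Omega$ with $\omega_n \to \xi'$, and $(1,\kappa)$-almost geodesics $\sigma_n:[0,T_n]\to\Omega$ joining them with $\sigma_n \cap A_n = \emptyset$ for an exhausting sequence $A_n \uparrow \Omega\cap U$. The strategy is to pit a Euclidean \emph{speed} bound on $\sigma_n$ (coming from the log-type convexity plus Theorem~\ref{local 3}) against a \emph{length} upper bound (from Proposition~\ref{mercer} and Theorem~\ref{localdistance}).

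Concretely, fix a relatively compact open set $V$ with $\overline V\subset U$ containing $\xi,\xi'$, and choose $\epsilon_0>0$ so small that $\overline{B_{3\epsilon_0}(\xi)}$ and $\overline{B_{3\epsilon_0}(\xi')}$ are disjoint subsets of $V$. Set
\[
A_n := \{z\in\overline V\cap\overline\Omega:\delta_\Omega(z)\geq 1/n\}\subset \Omega\cap U,
\]
which is compact. For $n$ large, $\sigma_n$ first exits $B_{2\epsilon_0}(\xi)$ at some time $s_n$, and on $[0,s_n]$ the curve stays in $V\cap\Omega$; the avoidance $\sigma_n\cap A_n=\emptyset$ then forces $\delta_\Omega(\sigma_n(t))<1/n$ throughout $[0,s_n]$.

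The key step is the Euclidean speed bound. The convex lower bound for the Kobayashi metric on $\Omega\cap U$ together with the log-type convexity gives
\[
k_{\Omega\cap U}(\sigma_n(t);\sigma_n'(t))\geq \frac{|\sigma_n'(t)|}{2\delta_{\Omega\cap U}(\sigma_n(t);\sigma_n'(t))}\geq \frac{|\sigma_n'(t)|\,|\log\delta_{\Omega\cap U}(\sigma_n(t))|^{1+\nu}}{2C}.
\]
Theorem~\ref{local 3} gives $k_{\Omega\cap U}(\sigma_n(t);\sigma_n'(t)) \leq (1+o(1))\,k_\Omega(\sigma_n(t);\sigma_n'(t))\leq 1+o(1)$ (using $k_\Omega \leq 1$ a.e.\ from the definition of an almost geodesic and $\delta_\Omega(\sigma_n(t))<1/n$). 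Combining and using $\delta_{\Omega\cap U}\leq \delta_\Omega<1/n$ for $n$ large (since $\sigma_n(t)\in V$ stays bounded away from $\partial U$), one gets $|\sigma_n'(t)|\leq 2C(1+o(1))/|\log n|^{1+\nu}$ a.e.\ on $[0,s_n]$. Integrating and using $|\sigma_n(s_n)-z_n|\geq \epsilon_0$ yields $s_n \geq \epsilon_0|\log n|^{1+\nu}/(4C)$. On the other hand, the $(1,\kappa)$-almost geodesic condition combined with Theorem~\ref{localdistance} and Proposition~\ref{mercer} applied to the convex domain $\Omega\cap U$ gives $s_n\leq K_\Omega(z_n,\sigma_n(s_n))+\kappa \lesssim \log n$. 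For $\nu>0$ this contradicts the lower bound, completing the argument.

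The main obstacle is the upper bound step: in order for Proposition~\ref{mercer} to give $s_n \lesssim \log n$, one needs $\delta_\Omega(z_n)$ and $\delta_\Omega(\sigma_n(s_n))$ to be comparable to $1/n$ rather than arbitrarily smaller. This is handled by strengthening the compact sets $A_n$ — for instance adding annular ``barriers'' of the form $\{z\in\overline V\cap\overline\Omega:\delta_\Omega(z)\in [a_n,b_n]\}$ for suitably chosen $a_n,b_n\to 0$ — which, by continuity of $\delta_\Omega\circ\sigma_n$ and the avoidance of $A_n$, pins down the level of $\delta_\Omega$ at the relevant sampling times along $\sigma_n$; this is the same bookkeeping used in the visibility argument of Bharali--Zimmer~\cite{Bharali2017Goldilocks}.
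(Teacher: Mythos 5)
Your ingredients are the right ones --- the lower bound $k_{\Omega\cap U}(z;v)\geq \frac{|v|}{2\delta_{\Omega\cap U}(z;v)}$ on the convex piece, the log-type condition, Theorem \ref{local 3} to compare $k_{\Omega\cap U}$ with $k_{\Omega}$, and Proposition \ref{mercer} --- and your Euclidean speed bound $|\sigma_n'(t)|\lesssim |\log\delta_\Omega(\sigma_n(t))|^{-(1+\nu)}$ is exactly the estimate the paper uses. But the way you combine them has a genuine gap that your own "main obstacle" paragraph does not repair. Your contradiction hinges on the upper bound $s_n\leq K_\Omega(z_n,\sigma_n(s_n))+\kappa\lesssim \log n$, which via Proposition \ref{mercer} requires $\delta_\Omega(z_n)$ and $\delta_\Omega(\sigma_n(s_n))\gtrsim n^{-O(1)}$. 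There is no way to arrange this: in the contradiction hypothesis the point $z_n$ is handed to you and ranges over \emph{all} of $\Omega\cap B_\epsilon(\xi)$, so $\delta_\Omega(z_n)$ may tend to $0$ arbitrarily fast (say $e^{-n^{10}}$), making $K_\Omega(z_n,z_0)$, and hence the admissible $s_n$, far larger than $\log n$ and destroying the comparison with the lower bound $s_n\gtrsim(\log n)^{1+\nu}$. Enlarging the avoided sets $A_n$ by annular barriers $\{\delta_\Omega\in[a_n,b_n]\}$ cannot help: by the intermediate value theorem, avoidance only forces $\delta_\Omega\circ\sigma_n$ to stay on one side of the band, i.e.\ it yields further \emph{upper} bounds on $\delta_\Omega$ along the curve, never the \emph{lower} bound on $\delta_\Omega(z_n)$ that your Mercer step needs.

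The correct combination --- which is what the paper, following Bharali--Zimmer, actually does --- uses Proposition \ref{mercer} pointwise along the curve rather than only at the endpoints. Let $\tau$ maximize $\delta_\Omega(\sigma(t))$ on the portion $[0,T_0]$ inside $\overline{B_\epsilon(\xi)}$. The triangle inequality through $z_0$ plus Proposition \ref{mercer} gives $|t-\tau|\leq 2\alpha+\kappa+\beta\log\frac{1}{\delta_\Omega(\sigma(t))\delta_\Omega(\sigma(\tau))}$, hence $\delta_\Omega(\sigma(t))\leq \exp\bigl(\frac{2\alpha+\kappa-|t-\tau|}{2\beta}\bigr)$, i.e.\ $|\log\delta_\Omega(\sigma(t))|\gtrsim |t-\tau|$ for $|t-\tau|$ large. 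Feeding this into your speed bound and integrating,
$$\epsilon\leq 2c_1c_2\int_0^{T_0}\frac{dt}{|\log\delta_\Omega(\sigma(t))|^{1+\nu}}\leq \frac{4c_1c_2M}{|\log\delta_\Omega(\sigma(\tau))|^{1+\nu}}+4c_1c_2\int_M^\infty\Bigl(\frac{2\beta}{r-(2\alpha+\kappa)}\Bigr)^{1+\nu}dr,$$
and since $\nu>0$ the tail is integrable and can be made $<\epsilon/2$ by a choice of $M$ that is uniform in $z,\omega,\sigma$. This forces a uniform lower bound on $\delta_\Omega(\sigma(\tau))$, which is the compact set $A$. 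Note that no information about $\delta_\Omega$ at the endpoints is ever needed; that is precisely what your version is missing. As written, your proof does not close, and the fix is not a bookkeeping refinement of the $A_n$ but a structurally different use of Proposition \ref{mercer}.
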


\begin{proof}
Fix $z_0\in\Omega$, and choose $\epsilon$ small enough such that $B_{\epsilon}(\xi)\cap B_{\epsilon}(\xi')=\emptyset$ and $\forall z\in\Omega\cap B_{\epsilon}(\xi)$,
$$\delta_{\Omega}(z)=\delta_{\Omega\cap U}(z).$$
Taking $z\in\Omega\cap B_{\epsilon}(\xi)$ and $\omega\in\Omega\cap B_{\epsilon}(\xi')$, let $\sigma:[0,T]\rightarrow\Omega$ be a $(1,\kappa)$-almost geodesic joining $z$ and $\omega$. By denoting
$$T_0=\max\left\{t\in[0,T]:\sigma([0,t])\subset \overline{B_{\epsilon}(\xi)}\right\},$$
we choose $\tau\in[0,T_0]$ which satisfies
$$\delta_{\Omega}(\sigma(\tau))=\max\{\delta_{\Omega}(\sigma(t)):t\in[0,T_0]\}.$$
Now for $t\in[0,T_0]$, in view of Proposition \ref{mercer} we have
\begin{align}
|t-\tau|&=K_{\Omega}(\sigma(t),\sigma(\tau))+\kappa\leq K_{\Omega}(\sigma(t),z_0)+K_{\Omega}(z_0,\sigma(\tau))+\kappa\notag\\
&\leq2\alpha+\kappa+\beta\log\frac{1}{\delta_{\Omega}(\sigma(t))\delta_{\Omega}(\sigma(\tau))},
\end{align}
for some $\alpha,\beta>0.$
Therefore
$$\delta_{\Omega}(\sigma(t))\leq \sqrt{\delta_{\Omega}(\sigma(t))\delta_{\Omega}(\sigma(\tau))}\leq \exp\left(\frac{-|t-\tau|+2\alpha+\kappa}{2\beta}\right).$$
Noting that $\Omega\cap U$ is convex, if $z$ is near $\xi$, then we have
 \begin{align}
 k_{\Omega\cap U}(z,v)\geq\frac{|v|}{2\delta_{\Omega\cap U}(z,v)}=\frac{|v|}{2\delta_{\Omega}(z,v)}.
 \end{align}
By Theorem $\ref{local 3}$, it follows that there exists $c_1>0$ such that, for any $z$ near $\xi$,
\begin{align*}
k_{\Omega\cap U}(z,v)\leq c_1k_{\Omega}(z,v).
\end{align*}
Suppose that $$\delta_{\Omega\cap U}(z,v)\leq c_2|\log\delta_{\Omega\cap U}(z)|^{-(1+\nu)}.$$
Now fix an $M>0$ which satisfies
$$\int_{M}^{\infty} \left(\frac{2 \beta}{r-(2 \alpha+\kappa)}\right)^{1+\nu}<\frac{\epsilon}{8c_1c_2}.$$
Then, since $\sigma$ is a geodesic,
we deduce that
\begin{align*}
1\geq k_{\Omega}(\sigma(t);\sigma'(t))&\geq \frac{1}{c_1}k_{\Omega\cap U}(\sigma(t);\sigma'(t))\geq\frac{1}{2c_1}\frac{|\sigma'(t)|}{\delta_{\Omega\cap U}(\sigma(t);\sigma'(t))}\\
&\geq \frac{1}{2c_1c_2}|\sigma'(t)||\log\delta_{\Omega\cap U}(\sigma(t)|^{1+\nu}\\
&= \frac{1}{2c_1c_2}|\sigma'(t)||\log\delta_{\Omega}(\sigma(t)|^{1+\nu}.
\end{align*}
So, we obtain that
$$\epsilon=\left\|\sigma(0)-\sigma\left(T_{0}\right)\right\| \leq \int_{0}^{T_{0}}\left\|\sigma^{\prime}(t)\right\| d t \leq 2c_1c_2 \int_{0}^{T_{0}} \frac{1}{|\log\delta_{\Omega}(\sigma(t))|^{1+\nu}} dt.$$
Then
\begin{align*}
\epsilon&\leq 2c_1c_2 \int_{\left[0, T_{0}\right] \cap(\tau-M, \tau+M)} \frac{1}{|\log\delta_{\Omega}(\sigma(t))|^{1+\nu}} d t\\
&+2c_1c_2 \int_{\left[0, T_{0}\right] \cap(\tau-M, \tau+M)^{c}} \frac{1}{|\log\delta_{\Omega}(\sigma(t))|^{1+\nu}} d t\\
&\leq 4 c_1c_2 M \frac{1}{(\log\delta_{\Omega}(\sigma(\tau)))^{1+\nu}}+4 c_1c_2 \int_{M}^{\infty} \left(\frac{ 2\beta}{r-(2\alpha+\kappa)}\right)^{1+\nu} d r\\
&\leq 4 c_1c_2 M \frac{1}{(\log\delta_{\Omega}(\sigma(\tau)))^2}+\frac{\epsilon}{2}.
\end{align*}
Therefore
$$\delta_{\Omega}(\sigma(\tau))\geq \exp\left(\left(\frac{\epsilon}{8c_1c_2M}\right)^{\frac{1}{1+\nu}}\right).$$
Although $\tau\in[0, T_0]$ depends on the specific $(1, \kappa)$-almost-geodesic $\sigma$, the lower bound for $\delta_{\Omega}(z)$ is independent of $z, ω $ and the $(1, \kappa)$-almost-geodesic $\sigma$ joining these two points.
This completes the proof.
\end{proof}
This allows us to make a uniform choice of A $\subseteq \Omega\cap U$ as desired.
%
%
\begin{lemma}\label{localdistance1}
Suppose that $\Omega$ is a bounded domain with $\Omega\cap U$ log-type convex. For any $\xi\in\partial\Omega\cap U$, there exists $\epsilon>0 $ and $K'>0$ such that
$$K_{\Omega}(p,q)\leq K_{\Omega\cap U}(p,q)\leq K_{\Omega}(p,q)+K',$$
for any $p,q\in \Omega \cap B_{\epsilon}(\xi) \subset\Omega\cap U$.
\end{lemma}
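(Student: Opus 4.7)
The left-hand inequality $K_{\Omega}(p,q)\le K_{\Omega\cap U}(p,q)$ is the standard monotonicity of the Kobayashi pseudodistance under inclusion of domains, so no work is needed there.

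For the right-hand inequality, my plan is to produce a $(1,\kappa)$-almost-geodesic $\sigma$ in $\Omega$ from $p$ to $q$ via Lemma \ref{almost}, show that it is contained in a neighbourhood of $\xi$ sitting inside $U$, and then integrate as in Lemma \ref{local}. Concretely, I would fix $\epsilon_{0}>0$ small enough that $\overline{B_{2\epsilon_{0}}(\xi)}\subset U$ and that both Theorem \ref{local 3} and Lemma \ref{local} apply on $B_{2\epsilon_{0}}(\xi)\cap\Omega$, set $\kappa=1$, and take $\epsilon\le\epsilon_{0}$ to be pinned down in the next step. Given $p,q\in B_{\epsilon}(\xi)\cap\Omega$, Lemma \ref{almost} produces a $(1,1)$-almost-geodesic $\sigma:[a,b]\to\Omega$ joining $p$ and $q$, with $|b-a|\le K_{\Omega}(p,q)+1$ and $k_{\Omega}(\sigma(t);\sigma'(t))\le 1$ almost everywhere.

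The crux is to show, after shrinking $\epsilon$ if necessary, that $\sigma([a,b])\subset B_{\epsilon_{0}}(\xi)\subset U$. I would mimic the visibility calculation of Lemma \ref{visibility}. Assume for contradiction that $\sigma$ exits $B_{\epsilon_{0}}(\xi)$, and let $[a,t_{1}]$ be an initial subinterval on which $\sigma\subset\overline{B_{\epsilon_{0}}(\xi)}$ with $\sigma(t_{1})\in\partial B_{\epsilon_{0}}(\xi)$, so $|\sigma(t_{1})-\sigma(a)|\ge\epsilon_{0}-\epsilon$. On $[a,t_{1}]$ the almost-geodesic lies in $\Omega\cap U$, so chaining Theorem \ref{local 3} with the convex lower bound $k_{\Omega\cap U}(z;v)\ge|v|/(2\delta_{\Omega\cap U}(z;v))$, the log-type bound $\delta_{\Omega\cap U}(z;v)\le C|\log\delta_{\Omega}(z)|^{-(1+\nu)}$, and $k_{\Omega}(\sigma;\sigma')\le 1$ yields, exactly as in Lemma \ref{visibility},
\[\epsilon_{0}-\epsilon\le\int_{a}^{t_{1}}|\sigma'(t)|\,dt\le 2c_{1}c_{2}\int_{a}^{t_{1}}\frac{dt}{|\log\delta_{\Omega}(\sigma(t))|^{1+\nu}}.\]
Splitting this integral around the time $\tau\in[a,t_{1}]$ at which $\delta_{\Omega}(\sigma(\cdot))$ is maximised, as in the proof of Lemma \ref{visibility}, forces a positive lower bound $\delta_{\Omega}(\sigma(\tau))\ge\delta^{*}$ independent of $\epsilon$. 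Combined with Proposition \ref{mercer} applied inside $\Omega\cap U$ to the pairs $(p,\sigma(\tau))$ and $(\sigma(\tau),q)$, and with the almost-geodesic inequality $|\tau-a|\le K_{\Omega}(\sigma(a),\sigma(\tau))+1$, this will bound $|\tau-a|$ and $|b-\tau|$ from above by something logarithmic in $\delta_{\Omega}(p)$ and $\delta_{\Omega}(q)$, and the resulting estimate collides with the required Euclidean gap $\epsilon_{0}-\epsilon$ once $\epsilon$ is taken small enough. Any subsequent exit of $\sigma$ from $B_{\epsilon_{0}}(\xi)$ is handled identically.

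Granted the containment, Lemma \ref{local} applied to $\sigma$ (with $\epsilon_{0}$ in the role of its $\epsilon$) delivers
\[K_{\Omega\cap U}(p,q)=K_{\Omega\cap U}(\sigma(a),\sigma(b))\le|b-a|+K\le K_{\Omega}(p,q)+K+1,\]
so the lemma holds with $K':=K+1$. The principal obstacle is the containment step: Lemma \ref{visibility} is stated only for two \emph{distinct} boundary points, so its computation must be transplanted to the single-boundary-point configuration, and one has to verify carefully that the Mercer-type upper bound genuinely prevents $\sigma$ from making a long excursion past $\partial B_{\epsilon_{0}}(\xi)$ in the regime where $K_{\Omega}(p,q)$ itself grows as $p,q\to\xi$.
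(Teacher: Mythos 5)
The first half of your plan --- monotonicity for the left inequality, and the reduction to a $(1,\kappa)$-almost-geodesic from Lemma \ref{almost} followed by the integral estimate of Lemma \ref{local} --- matches the paper. The gap is the containment step. You assert that, after shrinking $\epsilon$, the $\Omega$-almost-geodesic $\sigma$ joining $p,q\in B_{\epsilon}(\xi)$ must stay inside $B_{\epsilon_{0}}(\xi)$, and you try to force a contradiction out of the visibility computation. But that computation produces no contradiction: if $\sigma$ crosses the annulus $B_{\epsilon_{0}}(\xi)\setminus B_{\epsilon}(\xi)$ while remaining in $\Omega\cap U$, splitting the integral around $\tau$ only shows that $\sigma$ passes through a point with $\delta_{\Omega}(\sigma(\tau))\geq\delta^{*}$, i.e.\ through a fixed compact subset of $\Omega\cap U$ --- that is a conclusion, not an absurdity. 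The Mercer-type bound making $|\tau-a|$ logarithmic in $1/\delta_{\Omega}(p)$ is perfectly compatible with the Euclidean gap $\epsilon_{0}-\epsilon$, so nothing collides. Worse, $\sigma$ is an almost-geodesic of $\Omega$, not of $\Omega\cap U$: it may leave $U$ altogether and make an excursion through $\Omega\setminus U$, where none of the local estimates (Theorem \ref{local 3}, the convex lower bound, the log-type bound) are available; and when $K_{\Omega}(p,q)$ is itself of order $\log(1/\delta_{\Omega}(p))+\log(1/\delta_{\Omega}(q))$ there is no length obstruction to such an excursion. So the containment you need is unproven, and it is not true in general.

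The paper circumvents this with Lemma \ref{almostgeodesic}, which proves a dichotomy rather than containment: either $\sigma$ stays in $B_{\epsilon}(\xi)$, or the first exit time $a'$ and last re-entry time $b'$ for $B_{\epsilon/2}(\xi)$ satisfy $\delta_{\Omega}(\sigma(a')),\delta_{\Omega}(\sigma(b'))\geq\alpha>0$ uniformly --- and \emph{this} is where the visibility argument together with a comparison of $L(\sigma)$ against a shortcut is genuinely used, namely to rule out exit points tending to $\partial\Omega$. In the second case one estimates $K_{\Omega\cap U}(p,q)\leq K_{\Omega\cap U}(p,\sigma(a'))+K_{\Omega\cap U}(\sigma(a'),\sigma(b'))+K_{\Omega\cap U}(\sigma(b'),q)$, bounds the middle term by a uniform constant because $\sigma(a'),\sigma(b')$ lie in a compact subset of the convex set $\Omega\cap U$ (join them by a Euclidean segment), and applies Lemma \ref{local} only to the two end pieces, which do remain in the ball. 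You would need to restructure your argument along these lines; as written, the key step fails.
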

To prove Lemma \ref{localdistance1}, the following lemma is needed. Note that $\Omega\cap U$ is log-type convex and $\xi\in\partial\Omega\cap U$.

\begin{lemma}\label{almostgeodesic}
For any $\epsilon>0$ with $B_{\epsilon}(\xi)\subset U$, there exists $\delta>0$ with the property:
For any $p,q\in B_{\delta}(\xi)$ and a $(1,\kappa)$-almost geodesic $\sigma$ which satisfies $\sigma(a)=p$ and $\sigma(b)=q$, there exists a rough geodesic $\widetilde{\sigma}$ joining $p$ and $q$ such that $\widetilde{\sigma}([a,b])\subset B_{\epsilon}(\xi)$.
\end{lemma}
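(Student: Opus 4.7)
The plan is to build the rough geodesic $\widetilde\sigma$ by leaving the given almost-geodesic $\sigma$ unchanged on the portions where it already lies in $B_\epsilon(\xi)$, and replacing any excursion outside $B_\epsilon(\xi)$ by the straight chord inside $\Omega\cap U\cap\overline{B_\epsilon(\xi)}$. The crucial technical point is that the duration (in the parameter $t$) of any such excursion is bounded uniformly in $\delta$; this will follow from the same visibility-type estimate used in the proof of Lemma~\ref{visibility}.

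Fix $\epsilon>0$ with $\overline{B_\epsilon(\xi)}\subset U$ and let $\sigma:[a,b]\to\Omega$ be the given $(1,\kappa)$-almost-geodesic. If $\sigma([a,b])\subset B_\epsilon(\xi)$ there is nothing to do, since a $(1,\kappa)$-almost-geodesic satisfies the rough-geodesic inequalities with constant $\kappa$; I simply take $\widetilde\sigma=\sigma$. Otherwise, set
$$t_1=\inf\{t\in[a,b]:\sigma(t)\notin B_\epsilon(\xi)\},\qquad t_2=\sup\{t\in[a,b]:\sigma(t)\notin B_\epsilon(\xi)\},$$
so that $\sigma(t_1),\sigma(t_2)\in\partial B_\epsilon(\xi)\cap\Omega$ and $\sigma|_{[a,t_1]},\sigma|_{[t_2,b]}\subset\overline{B_\epsilon(\xi)}\cap\Omega\subset U$.

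The main step is a uniform bound on $t_2-t_1$. Since $k_\Omega(\sigma;\sigma')\le1$ and $\sigma|_{[a,t_1]}$ lies in $\Omega\cap U$, Theorem~\ref{local 3} together with the convex lower bound on $k_{\Omega\cap U}$ and the log-type convex estimate $\delta_{\Omega\cap U}(z;v)\le C/|\log\delta_{\Omega\cap U}(z)|^{1+\nu}$ yield $|\sigma'(t)|\le C'/|\log\delta_\Omega(\sigma(t))|^{1+\nu}$ a.e. on $[a,t_1]$. Combining this with $|\sigma(t_1)-p|\ge\epsilon-\delta\ge\epsilon/2$ (for $\delta\le\epsilon/2$), Proposition~\ref{mercer} for the exponential decay of $\delta_\Omega$ along $\sigma$, and exactly the same integral-splitting as in Lemma~\ref{visibility}, I obtain $\tau\in[a,t_1]$ with $\delta_\Omega(\sigma(\tau))\ge c(\epsilon)>0$, where $c(\epsilon)$ depends only on $\epsilon$ and the log-type-convex parameters. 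The symmetric argument on $[t_2,b]$ produces $\tau'\in[t_2,b]$ with $\delta_\Omega(\sigma(\tau'))\ge c(\epsilon)$. Both $\sigma(\tau),\sigma(\tau')$ thus lie in the compact set $K_\epsilon=\overline{B_\epsilon(\xi)}\cap\{z\in\Omega:\delta_\Omega(z)\ge c(\epsilon)\}$, whose Kobayashi diameter $D(\epsilon)$ is finite, and the almost-geodesic inequality gives
$$t_2-t_1\le\tau'-\tau\le K_\Omega(\sigma(\tau),\sigma(\tau'))+\kappa\le D(\epsilon)+\kappa.$$

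With this bound in hand, I define $\widetilde\sigma(t)=\sigma(t)$ on $[a,t_1]\cup[t_2,b]$ and on $[t_1,t_2]$ let $\widetilde\sigma(t)$ be the linear parameterization of the straight chord from $\sigma(t_1)$ to $\sigma(t_2)$. Convexity of $\Omega\cap U$ keeps the chord inside $\Omega\cap U$, and convexity of $B_\epsilon(\xi)$ keeps it inside $\overline{B_\epsilon(\xi)}$; after shrinking $\epsilon$ slightly if needed, $\widetilde\sigma([a,b])\subset B_\epsilon(\xi)$. The rough-geodesic inequalities $|t-s|-K''\le K_\Omega(\widetilde\sigma(s),\widetilde\sigma(t))\le|t-s|+K''$ with $K''=2D(\epsilon)+2\kappa$ then follow by splitting into the three pieces: on the $\sigma$-pieces one has the almost-geodesic inequality for $\sigma$, and any pair $(s,t)$ with at least one of $s,t$ in $[t_1,t_2]$ is within parameter distance $t_2-t_1\le D(\epsilon)+\kappa$ of $\{t_1,t_2\}$, so one absorbs the chord contribution into $K''$ via the triangle inequality. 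The main obstacle is running the visibility-style estimate carefully to produce the uniform constant $c(\epsilon)$ (and hence the diameter $D(\epsilon)$) independent of $\delta$ and of the particular $\sigma$; once that is in place, the remaining rough-geodesic bookkeeping is routine.
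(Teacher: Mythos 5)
Your overall strategy is the same as the paper's: use the visibility-type integral estimate to control the excursion of $\sigma$ outside a ball around $\xi$, and then replace that excursion by a Euclidean chord, which stays in $\Omega\cap U$ by convexity. Your main step is in fact carried out more directly than in the paper: you re-run the computation from the proof of Lemma~\ref{visibility} on each leg $[a,t_1]$ and $[t_2,b]$ to produce points $\sigma(\tau),\sigma(\tau')$ with $\delta_\Omega\geq c(\epsilon)$, and deduce the uniform bound $t_2-t_1\leq\tau'-\tau\leq D(\epsilon)+\kappa$; the paper instead argues by contradiction with sequences, invoking Lemma~\ref{visibility} as a black box together with a length comparison. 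That part of your argument is correct and arguably cleaner.

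The gap is in the final step. You draw the chord between $\sigma(t_1)$ and $\sigma(t_2)$, the exit and re-entry points on $\partial B_\epsilon(\xi)$, and claim the ``chord contribution'' can be absorbed into $K''$ by the triangle inequality. That requires $\sup_{s\in[t_1,t_2]}K_\Omega\bigl(\sigma(t_1),\widetilde\sigma(s)\bigr)$ to be uniformly bounded, and nothing you have proved guarantees this: your interior points are $\sigma(\tau),\sigma(\tau')$, not the chord endpoints, and a priori $\sigma(t_1),\sigma(t_2)$ may lie arbitrarily close to $\partial\Omega$, in which case the chord can hug the boundary and its Kobayashi length (hence the distance from its endpoints to interior chord points) is not controlled by $D(\epsilon)+\kappa$. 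Establishing precisely this interior bound for the exit points is the heart of the paper's proof (its claim~(2), that $\delta_\Omega(\sigma(a')),\delta_\Omega(\sigma(b'))>\alpha$ uniformly). Your argument can be repaired with what you already have: since $t_1-\tau\leq D(\epsilon)+\kappa$ and $k_\Omega(\sigma;\sigma')\leq1$ with $\sigma([\tau,t_1])\subset\overline{B_\epsilon(\xi)}$, Theorem~\ref{local 3} gives $K_{\Omega\cap U}(\sigma(\tau),\sigma(t_1))\leq c_1\bigl(D(\epsilon)+\kappa\bigr)$, and the standard convex-domain estimate $K_{\Omega\cap U}(x,y)\geq\frac12\bigl|\log\bigl(\delta_{\Omega\cap U}(x)/\delta_{\Omega\cap U}(y)\bigr)\bigr|$ then forces $\delta_{\Omega\cap U}(\sigma(t_1))$ (and likewise $\delta_{\Omega\cap U}(\sigma(t_2))$) to be bounded below by a constant depending only on $\epsilon$. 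Once the chord endpoints lie in a fixed compact subset of $\Omega\cap U$, concavity of the boundary-distance along the segment keeps the whole chord there, and your bookkeeping does become routine. You should add this step explicitly; as written, the proof is incomplete exactly where the paper's proof does its real work.
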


\begin{proof}
Taking $\displaystyle{\delta=\frac{\epsilon}{4}}$,
we claim that either\\
(1) $\sigma|_{[a,b]}\subset B_{\epsilon}(\xi)$, or\\
(2) there exists $\alpha>0$ such that $\delta_{\Omega}(\sigma(a'))>\alpha$ and $\delta_{\Omega}(\sigma(b'))>\alpha$, where
$$a'=\inf\{t\in[a,b]:\sigma(t)\in\Omega\backslash B_{\epsilon/2}(\xi)\},$$
and
$$b'=\sup\{t\in[a,b]:\sigma(t)\in\Omega\backslash B_{\epsilon/2}(\xi)\}.$$

\bigskip
If (1) does not hold, we prove (2) holds.

We assume, by contradiction, that there exist sequences $\{a_m\}$ and $\{b_m\}$ and $(1,\kappa)$-almost geodesic $\sigma_m$ joining $a_m$ and $b_m$.
By passing to a subsequence we may assume that $\sigma(a_m)\in B_{\frac{\epsilon}{4}}(\xi)$ ,
$\sigma(b_m)\in B_{\frac{\epsilon}{4}}(\xi)$ and
$$
\sigma_{m}(a_m')\rightarrow\xi_1\in\partial \Omega\cap \partial B_{\frac{\epsilon}{2}}(\xi) \:\: \mbox{and} \:\: \sigma_{m}(b_m')\rightarrow\xi_2\in\partial \Omega\cap \partial B_{\frac{\epsilon}{2}}(\xi).
$$
By using Lemma $\ref{visibility}$, there exists a compact set $A\subset\Omega$ such that $\sigma_m|_{[a_m,a_m']}\cap A\neq\emptyset$ and $\sigma_m|_{[b_m',b_m]}\cap A\neq\emptyset$. Suppose $\sigma_m(t_m^1)\in\sigma_m|_{[a_m,a_m']} $ and $\sigma_m(t_m^2)\in\sigma_m|_{[b_m',b_m]}$ which satisfy $\sigma_m(t_m^1)\in A$ and $\sigma_m(t_m^2)\in A$.

On one hand, we have
\begin{align*}
K_{\Omega}&(\sigma_{m}(a_m),\sigma_m (b_m))\\
&=K_{\Omega}(\sigma_{m}(a_m),\sigma_m (t_m^1))+K_{\Omega}(\sigma_{m}(t_m^1),\sigma_m (a_m'))
+K_{\Omega}(\sigma_{m}(b_m'),\sigma_m (t_m^2))\\
&+K_{\Omega}(\sigma_{m}(t_m^2),\sigma_m (b_m))\\
&\geq K_{\Omega}(\sigma_{m}(a_m),\sigma_m (t_m^1))+K_{\Omega}(\sigma_{m}(t_m^2),\sigma_m (b_m))+\frac{1}{2}\log\frac{1}{\delta_{\Omega}(\sigma_{m}(a_m'))}\\
&+
\frac{1}{2}\log\frac{1}{\delta_{\Omega}(\sigma_{m}(b_m'))}-C.
\end{align*}
On the other hand, joining $\sigma_m(t_m^1)$ and $\sigma_n(t_m^2)$ by a line segment $l$, we define
$$\widetilde{\sigma_m}(t)=
\begin{cases}
\sigma(t) \ \ \ \ \ t\in\mathbb{R}\backslash(t_m^1,t_m^2)\\
l(t) \ \ \ \ \ t\in[t_m^1,t_m^2].
\end{cases}
$$
Then we deduce that
\begin{align*}
L(\sigma_m|_{[a_m,b_m]})&-L(\widetilde{\sigma_m}|_{[a_m, b_m]})\\
&\geq\frac{1}{2}\log\frac{1}{\delta_{\Omega}(\sigma_{m}(a_m'))}+\frac{1}{2}\log\frac{1}{\delta_{\Omega}(\sigma_{m}(b_m'))}+C'
\rightarrow\infty,
\end{align*}
which contradicts the fact the $\sigma_m$ is a $(1,\kappa)$-almost geodesic.
Thus, it implies that there exists $\alpha>0$ such that $\delta_{\Omega}(\sigma(a'))\geq \alpha$ and $\delta_{\Omega}(\sigma(b'))\geq \alpha$, which means $\sigma(a')$ and $\sigma(b')$ are contained in a compact set in $\Omega\cap U$.

Therefore, by joining $\sigma_{m}(a_m')$ and $\sigma_{m}(b_m')$ by a line segment and re-parameterizing the new path, we get a $C''$-rough geodesic, which proves the lemma.
\end{proof}

\textit{Proof of Lemma \ref{localdistance1}.}

For any $p,q\in B_{\epsilon}(\xi)$ and an almost geodesic $\sigma$ with $\sigma(a)=p, \: \sigma(b)=q$,
we then obtain
\begin{align*}
K_{\Omega\cap U}&(p,q)\\
&\leq K_{\Omega\cap U}(\sigma(a),\sigma(a'))+K_{\Omega\cap U}(\sigma(a'),\sigma(b'))+K_{\Omega\cap U}(\sigma(b'),\sigma(b))\\
&\leq K_{\Omega}(\sigma(a),\sigma(a'))+K_{\Omega}(\sigma(b'),\sigma(b))+2K+C''\\
&\leq K_{\Omega}(p,q)+K',
\end{align*}
where $K'=2K+C''$ and $K$ is the constant defined in Lemma $\ref{local}$.

The proof is complete. \hfill$\square$

\bigskip
\textit{Proof of Theorem \ref{localdistance}.}

If $\overline V \subset \Omega$, there is nothing to do.

Otherwise, for any $\xi\in U\cap \partial \Omega$ there is a $\delta>0$ such that Theorem \ref{localdistance1} holds in $B_{\delta}(\xi)$. Moreover for any $\xi'\in B_{\frac{\delta}{2}}(\xi)\cap \partial\Omega$, Theorem \ref{localdistance1} also holds in $B_{\frac{\delta}{2}}(\xi')\subset B_{\delta}(\xi)$. Noting that $\partial \Omega \cap \overline V$ is compact,
 we complete the proof. \hfill$\square$

\bigskip
Next we will give a similar localization of a log-type $\mathbb{C}$-convex domain with Dini-smooth boundary. Recall that a domain $\Omega$ is called $\mathbb{C}$-convex if the non-empty intersection with a complex line is simply connected. A log-type $\mathbb{C}$-convex domain is a $\mathbb{C}$-convex domain which also satisfies (1).

At first we need some estimates of the Kobayashi metrics in $\mathbb{C}$-convex domains studied in \cite{Nikolov2015Estimates,Nikolov2015The,Nikolov2008Estimates}.
\begin{lemma}[\cite{Nikolov2008Estimates},\cite{Nikolov2015The}]
If $\Omega$ is an $\mathbb{C}$-convex bounded domain, and $p,q\in\Omega$ are distinct and $v\in\mathbb{C}^n$, then
\begin{align}
k_{\Omega}(p;v)\geq\frac{1}{4}\frac{|v|}{\delta_{\Omega}(p;v)},
\end{align}
$$K_{\Omega}(p,q)\geq\frac{1}{4}\left|\log\left(1+\frac{|p-q|}{\min\{\delta_{\Omega}(p;p-q),\delta_{\Omega}(q;p-q)\}}\right)\right|.$$
\end{lemma}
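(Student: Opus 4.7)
The plan is to reduce both inequalities to well-known estimates on simply connected plane domains by exploiting the key geometric feature of a $\mathbb{C}$-convex domain: at every boundary point $\zeta$ there is a complex affine hyperplane $H_\zeta$ through $\zeta$ disjoint from $\Omega$, equivalently a complex affine linear functional $\ell_\zeta$ with $\ell_\zeta(\zeta)=0$ and $\ell_\zeta \neq 0$ on $\Omega$. I would combine this with the companion facts that complex-linear images of $\mathbb{C}$-convex domains remain $\mathbb{C}$-convex and that a $\mathbb{C}$-convex domain in $\mathbb{C}$ is exactly a simply connected proper subdomain of $\mathbb{C}$ (both standard in the Andersson--Passare--Sigurdsson theory of $\mathbb{C}$-convex sets). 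Pushing $\Omega$ forward by a suitable $\ell$ then lands us in a simply connected planar target, where Koebe's $1/4$ theorem takes over.

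For the infinitesimal inequality, I would pick $\xi \in \partial\Omega \cap (p+\mathbb{C}v)$ with $|p-\xi|=\delta_\Omega(p;v)$ and take $\ell=\ell_\xi$. The restriction $\ell:\Omega \to \ell(\Omega) \subset \mathbb{C}\setminus\{0\}$ is holomorphic into a simply connected plane domain, and the contracting property of the Kobayashi pseudometric together with Koebe's lower bound on the hyperbolic density $\lambda_{\ell(\Omega)}(w) \geq \frac{1}{4\,\delta_{\ell(\Omega)}(w)}$ yields
$$k_\Omega(p;v) \,\geq\, k_{\ell(\Omega)}(\ell(p);\ell(v)) \,\geq\, \frac{|\ell(v)|}{4\,\delta_{\ell(\Omega)}(\ell(p))} \,\geq\, \frac{|\ell(v)|}{4|\ell(p)|},$$
the last step using $0 \notin \ell(\Omega)$. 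Writing $p-\xi=\lambda v$ with $|\lambda|\,|v|=\delta_\Omega(p;v)$ and applying linearity of $\ell$ gives $|\ell(p)|=|\lambda|\,|\ell(v)|$, so the $|\ell(v)|$ factors cancel and the bound collapses to $|v|/(4\delta_\Omega(p;v))$.

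For the distance inequality, I would assume without loss of generality that $\delta_\Omega(p;p-q)\leq\delta_\Omega(q;p-q)$, pick $\xi \in \partial\Omega \cap (p+\mathbb{C}(p-q))$ realizing $\delta_\Omega(p;p-q)$, and take $\ell=\ell_\xi$. With $D:=\ell(\Omega)\subset\mathbb{C}\setminus\{0\}$ simply connected, integrating the Koebe density lower bound along any path gives the classical estimate
$$K_D(w_1,w_2) \,\geq\, \frac{1}{4}\log\!\left(1+\frac{|w_1-w_2|}{\min(\delta_D(w_1),\delta_D(w_2))}\right)$$
for the Kobayashi (= Poincar\'e) distance on $D$. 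Inserting $w_1=\ell(p),\, w_2=\ell(q)$, using $\delta_D(\ell(p))\leq|\ell(p)|$ together with the same linearity computation as before, the right-hand side is bounded below by $\frac{1}{4}\log(1+|p-q|/\delta_\Omega(p;p-q))$, which by our choice equals the asserted bound with $\min$ in the denominator.

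The only non-routine ingredient, and the main potential obstacle, is the simple connectivity of $\ell(\Omega)$; this is precisely where honest $\mathbb{C}$-convexity of $\Omega$ (as opposed to mere pseudoconvexity) is essential, and the reduction would fail for a general bounded pseudoconvex domain. Everything else in the argument is the contracting property of the Kobayashi pseudometric, Koebe's $1/4$ theorem, and an elementary one-variable integration of the density.
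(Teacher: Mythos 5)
Your proof is correct. The paper does not prove this lemma at all --- it is imported verbatim from \cite{Nikolov2008Estimates} and \cite{Nikolov2015The} --- and your argument (take the separating complex hyperplane at the boundary point of $p+\mathbb{C}v$, resp.\ of the line through $p$ and $q$, nearest to $p$; push $\Omega$ forward by the corresponding affine functional to a simply connected planar domain avoiding $0$; apply the Koebe quarter theorem and, for the distance estimate, integrate the density bound along an arbitrary curve) is precisely the standard proof in those references. The one genuinely nontrivial input, that a $\mathbb{C}$-convex open set is weakly linearly convex and that its affine images and one-dimensional slices are again $\mathbb{C}$-convex, is correctly identified and attributed to the Andersson--Passare--Sigurdsson theory.
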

\begin{lemma}[\cite{Nikolov2015Estimates}, Theorem 7]\label{est}
If $\xi$ be a Dini-smooth boundary point of a domain $\Omega$ in $\mathbb{C}^n$, then there exists $c>0$ and a neighbourhood $U$ of $\xi$ such that
$$K_{\Omega}(p,q)\leq\log\left(1+\frac{2|p-q|}{\sqrt{\delta_{\Omega}(p)\delta_{\Omega}(q)}}\right), \:\:\: \forall p, \: q\in \Omega\cap U.$$
\end{lemma}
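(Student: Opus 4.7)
The plan is to bound $K_\Omega(p,q)$ from above by the Kobayashi distance of a concrete model domain (a half-plane or a polydisk of half-planes), exploiting the distance-decreasing property together with an explicit holomorphic embedding that is made possible by Dini-smoothness of $\partial\Omega$ at $\xi$. Since the right-hand side $\log(1+2|p-q|/\sqrt{\delta_\Omega(p)\delta_\Omega(q)})$ is (up to a bounded factor) the explicit Kobayashi distance on the upper half-plane between points with imaginary parts $\delta_\Omega(p)$ and $\delta_\Omega(q)$, the natural strategy is to transplant the estimate from $\mathbb{H}$ (or $\mathbb{H}^n$) into $\Omega$.

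First I would localize. After a unitary change of coordinates centered at $\xi$, I may assume that the outward real normal to $\partial\Omega$ at $\xi$ is $-e_1$. Dini-smoothness at $\xi$ means that $\partial\Omega\cap U$ is, in these coordinates, the graph of a function whose gradient is Dini-continuous; in particular, the Dini modulus is integrable and $\partial\Omega\cap U$ admits a one-sided tangent cone at every point close to $\xi$ with uniform opening. From this one extracts, for every $z\in \Omega\cap U$ near $\xi$, a complex direction in which $\Omega$ contains a ``local half-plane cone'' of aperture independent of $z$.

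Second, I would construct a holomorphic map $\Phi:\mathbb{H}\to \Omega\cap U$ passing through $p$ and $q$. The classical device is to produce an analytic disc (a Bishop disc or a Warschawski-type Riemann map from the upper half-plane into the one-complex-dimensional slice of $\Omega$ through $p$ and $q$), whose non-tangential boundary behavior is controlled precisely by Dini-smoothness. Specifically, the Warschawski boundary correspondence theorem implies that when the boundary is Dini-smooth, the Riemann map extends to a $C^{1}$-map on the closure with non-vanishing derivative, and the same holds in higher dimensions after slicing by the complex line through $p$ and $q$ (after a small perturbation to keep the slice inside $\Omega\cap U$). Thus $\Phi$ is a biholomorphism onto its image (containing both $p$ and $q$), and there are constants $C_1,C_2>0$, depending only on the Dini modulus and on $U$, such that the preimages $\tilde p=\Phi^{-1}(p),\tilde q=\Phi^{-1}(q)\in \mathbb H$ satisfy $|\tilde p-\tilde q|\le C_1|p-q|$ and $\operatorname{Im}\tilde p\ge C_2\,\delta_\Omega(p)$, $\operatorname{Im}\tilde q\ge C_2\,\delta_\Omega(q)$.

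Third, combining the distance-decreasing property of the Kobayashi pseudo-distance with the explicit formula $K_{\mathbb H}(z,w)=\log\bigl(t+\sqrt{1+t^2}\bigr)\le \log(1+2t)$, where $t=|z-w|/(2\sqrt{\operatorname{Im} z\,\operatorname{Im} w})$, gives
\begin{equation*}
K_\Omega(p,q)\;\le\; K_{\mathbb H}(\tilde p,\tilde q)\;\le\; \log\!\left(1+\frac{|\tilde p-\tilde q|}{\sqrt{\operatorname{Im}\tilde p\,\operatorname{Im}\tilde q}}\right)\;\le\;\log\!\left(1+\frac{C|p-q|}{\sqrt{\delta_\Omega(p)\,\delta_\Omega(q)}}\right),
\end{equation*}
with $C=C_1/C_2$. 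Shrinking $U$ absorbs the constant $C$ into the factor $2$ in the statement (for points $p,q$ close enough to $\xi$ the logarithm is large whenever $|p-q|/\sqrt{\delta(p)\delta(q)}$ is large, and small otherwise). The main obstacle is step two: controlling the Riemann-map-type embedding $\Phi$ and its Jacobian uniformly near a merely Dini-smooth boundary point, where no quantitative $C^{1,\alpha}$ estimates are available; this forces an appeal to Warschawski-type boundary regularity for Dini-smooth domains rather than a direct $C^2$ peak-function argument.
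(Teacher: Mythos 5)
The paper does not prove this lemma at all: it is imported verbatim from Theorem~7 of \cite{Nikolov2015Estimates} (Nikolov--Andreev), so there is no in-paper argument to compare yours against; I can only assess your reconstruction on its own terms. Your overall target is right --- the right-hand side is the half-plane distance between points at heights $\delta_\Omega(p)$, $\delta_\Omega(q)$, and the final computation with $K_{\mathbb H}$ is correct --- but the central step is not carried out and, as designed, would fail. You propose a \emph{single} holomorphic map $\Phi:\mathbb H\to\Omega$ through both $p$ and $q$, obtained as a Riemann map of the slice of $\Omega$ by the complex line $L$ through $p$ and $q$. For two arbitrary points near $\xi$ that line is typically nearly tangent to $\partial\Omega$; the slice $\Omega\cap L$ is then not a uniformly Dini-smooth planar domain (it may be a thin sliver, disconnected, or multiply connected), Warschawski's theorem gives no estimates uniform in the pair $(p,q)$ as the slices degenerate, and --- crucially --- the inscribed-disc bound you would get from such a slice is $k\lesssim |v|/\delta_\Omega(z)$ rather than $|v|/(2\delta_\Omega(z))$. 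That lost factor of $2$ in the metric becomes a factor of $2$ in front of the logarithm in the distance, i.e.\ you would prove $K_\Omega(p,q)\lesssim \log\bigl(|p-q|^2/(\delta_\Omega(p)\delta_\Omega(q))\bigr)$, which is strictly weaker than the statement and too weak for the applications (the Gromov-product and visibility arguments in Section~4 need the coefficient $1$). The half-plane factor $1/(2\delta)$ is only available for slices in the \emph{complex normal} direction, which is exactly why the actual proof splits the journey into three pieces: a segment from $p$ inward along its normal up to height $\approx|p-q|$, the analogous segment at $q$, and a connecting arc of bounded Kobayashi length; Warschawski/Dini-smoothness is applied only to the normal slices, where the planar domains are uniformly Dini-smooth, each piece contributing $\tfrac12\log(|p-q|/\delta_\Omega(\cdot))+O(1)$, and the two halves sum to the stated bound.

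Two smaller points. First, your closing remark that ``shrinking $U$ absorbs the constant $C$ into the factor $2$'' is false: $\log(1+Ct)>\log(1+2t)$ for all $t>0$ when $C>2$, and the ratio $t=|p-q|/\sqrt{\delta_\Omega(p)\delta_\Omega(q)}$ ranges over all of $(0,\infty)$ in every neighbourhood of $\xi$, so no shrinking of $U$ changes the multiplicative constant. (The dangling ``there exists $c>0$'' in the paper's statement strongly suggests the original reads $c$ in place of $2$, and the applications only use the estimate up to such a constant, so this is cosmetic --- but the justification you give is not valid.) Second, even granting your map $\Phi$, the inequalities $\operatorname{Im}\tilde p\ge C_2\,\delta_\Omega(p)$ compare the height in $\mathbb H$ with $\delta_{\Omega\cap L}(p)$, not with $\delta_\Omega(p)$; these agree only up to the same uncontrolled tangential degeneration, so the reduction to $\mathbb H$ needs the normal-slice geometry in any case.
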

Therefore, if $\Omega\cap U$ is a $\mathbb{C}$-convex domain with Dini-smooth boundary, by fixing some $z_0\in\Omega\cap U$ and letting $L$ be the complex line containing $z_0$ and $z$ and $\xi\in L\cap \partial(\Omega\cap U)$, then there exists a $C>0$ such that
\begin{align}
\frac{1}{4}\left|\log\left(\frac{|z_0-\xi|}{|z-\xi|}\right)\right|\leq K_{\Omega}(z,z_0)\leq\frac{1}{2}\log\frac{1}{\delta_{\Omega}(z)}+C.
\end{align}
Moreover, if $\Omega\cap U$ is a log-type $\mathbb{C}$-convex domain with Dini-smooth boundary, then $\Omega$ also has the locally visibility property.

Similarly we can repeat the proof of Theorem \ref{localdistance} and obtain the following result.
\begin{thm}\label{c-convex}
If $\Omega$ is a bounded domain and $\Omega\cap U$ is log-type $\mathbb{C}$-convex with Dini-smooth boundary, then for any $V\subset U$ with $\overline V \subset U$, there exists $K>0$ such that, for every $p,q\in V\cap \Omega$,
$$K_{\Omega}(p,q)\leq K_{\Omega\cap U}(p,q)\leq K_{\Omega}(p,q)+K.$$
\end{thm}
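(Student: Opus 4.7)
The plan is to imitate the proof of Theorem \ref{localdistance} essentially verbatim, replacing each ingredient that used ordinary convexity by its $\mathbb{C}$-convex Dini-smooth analogue. Two facts make this possible. First, the lower bound $k_{\Omega}(p;v)\geq \frac{1}{4}|v|/\delta_{\Omega}(p;v)$ from the cited Nikolov--Pflug--Zwonek estimate gives the same qualitative behavior as the $\frac{1}{2}|v|/\delta_{\Omega}(p;v)$ estimate in the convex case, up to a harmless constant. Second, the combined upper/lower bound (17) on $K_{\Omega \cap U}(z,z_0)$ plays precisely the role that Proposition \ref{mercer} played for convex domains: it gives $K_{\Omega \cap U}(z,z_0)\leq \tfrac{1}{2}\log 1/\delta_{\Omega}(z)+C$ near the Dini-smooth boundary point.

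First I would reprove the analog of Lemma \ref{Lemma1}: for $\varphi:\mathbb{D}\to B_\tau(\xi)\cap\Omega$, the only step that used convexity was the lower bound $K_{\Omega\cap U}(\varphi(0),\varphi(\zeta))\geq \tfrac{1}{2}\log(\,\cdots/\delta_{\Omega\cap U}(\varphi(0);v)\,)$. Replace it by the $\mathbb{C}$-convex lower bound from the lemma quoted before Lemma \ref{est}, and combine with the Schwarz--Pick estimate exactly as before; the log-type bound on $\delta_{\Omega\cap U}(\varphi(0);v)$ does the rest, with $\alpha$ adjusted by the factor $4$. The proof of Theorem \ref{local 3} then carries over unchanged, since its argument via Hadamard's three circle lemma depends on Lemma \ref{Lemma1} only and is geometry-free otherwise. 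The conclusion is the same Lipschitz comparison
\[
 k_{\Omega}(p;v)\leq k_{\Omega\cap U}(p;v)\leq e^{c|\log\delta_{\Omega}(p)|^{-(1+\nu)}}k_{\Omega}(p;v).
\]

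Next I would redo Lemmas \ref{local}, \ref{visibility}, \ref{almostgeodesic} and \ref{localdistance1} in the $\mathbb{C}$-convex setting. In each of these the only geometric input is the existence of $\alpha,\beta>0$ with $K_{\Omega}(z,z_0)\leq \alpha+\beta\log 1/\delta_{\Omega}(z)$, which is furnished by (17), together with the $\mathbb{C}$-convex lower bound $k_{\Omega\cap U}(z;v)\geq \tfrac{1}{4}|v|/\delta_{\Omega\cap U}(z;v)$ used in Lemma \ref{visibility}. All the tail-integral estimates involving $|\log\delta_{\Omega}(\sigma(t))|^{-(1+\nu)}$ are unchanged in form, since they depend only on the log-type assumption, not on real convexity. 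Therefore we again get a constant $K'$ with $K_{\Omega\cap U}(p,q)\leq K_{\Omega}(p,q)+K'$ for all $p,q\in \Omega\cap B_\epsilon(\xi)$ whenever $\xi\in\partial\Omega\cap U$.

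Finally, the passage from the local bound near each boundary point to the desired uniform bound on the relatively compact set $\overline V\cap\Omega$ is purely a compactness argument, already carried out in the proof of Theorem \ref{localdistance}: cover $\partial\Omega\cap\overline V$ by finitely many balls $B_{\delta_i/2}(\xi_i)$ on which the local estimate holds, and handle points far from the boundary trivially. The main obstacle is simply the bookkeeping required to verify, lemma by lemma, that every appeal to convexity is replaced by either the $\mathbb{C}$-convex lower bound or the Dini-smooth Kobayashi estimate (17); once this is confirmed, no new idea is needed and the constants merely change by absolute factors.
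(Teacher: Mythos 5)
Your overall strategy is exactly the one the paper itself adopts: the paper omits the proof of Theorem \ref{c-convex}, remarking only that the convex estimates can be replaced by the $\mathbb{C}$-convex ones and that the key inequalities ``still hold.'' So you are on the paper's route. However, the one assertion you make that does not survive scrutiny is that the factor $\frac14$ in the $\mathbb{C}$-convex distance lower bound (versus $\frac12$ in the convex case) is ``harmless'' and that ``the constants merely change by absolute factors.'' These constants sit in front of logarithms, so they get exponentiated when you solve for $|\varphi(\zeta)-\varphi(0)|$ in the analogue of Lemma \ref{Lemma1}. Concretely, pairing
\[
\tfrac14\log\Bigl(1+\tfrac{|\varphi(\zeta)-\varphi(0)|}{\delta_{\Omega\cap U}(\varphi(0);v)}\Bigr)\;\le\;K_{\Omega\cap U}(\varphi(0),\varphi(\zeta))\;\le\;\tfrac12\log\tfrac{2}{1-|\zeta|}
\]
yields $|\varphi(\zeta)-\varphi(0)|\le 4\,\delta_{\Omega\cap U}(\varphi(0);v)\,(1-|\zeta|)^{-2}$, with a \emph{square} of $(1-|\zeta|)^{-1}$ that is absent in the convex computation. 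Keeping the threshold $|\zeta|\le 1-\alpha|\log\delta_{\Omega}(\varphi(0))|^{-(1+\nu)}$ then produces a bound of order $|\log\delta_{\Omega}(\varphi(0))|^{1+\nu}$, which is unbounded near the boundary; to recover the conclusion $|\varphi(\zeta)-\varphi(0)|\le\eta$ one is forced to shrink the threshold to $|\zeta|\le 1-\alpha|\log\delta_{\Omega}(\varphi(0))|^{-(1+\nu)/2}$.

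This halving of the exponent propagates. The analogue of Theorem \ref{local 3} then only gives $k_{\Omega\cap U}(p;v)\le e^{c|\log\delta_{\Omega}(p)|^{-(1+\nu)/2}}k_{\Omega}(p;v)$, and in the analogue of Lemma \ref{local} the tail integral becomes $\int_1^\infty (r+1)^{-(1+\nu)/2}\,dr$, which converges only when $\nu>1$. So for $0<\nu\le 1$ the argument as you sketch it (and as the paper asserts it) does not close: one needs either a sharper constant than $\frac14$ in the $\mathbb{C}$-convex lower bound, or a strengthened log-type hypothesis in the $\mathbb{C}$-convex case (exponent $2(1+\nu)$ in place of $1+\nu$, say), or a restriction on $\nu$. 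This is precisely the place where the ``bookkeeping'' you defer is not routine, and it deserves to be worked out explicitly rather than waved through.
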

Since the proof of Theorem \ref{c-convex} does not add any further insights beyond those
seen in the proof of Lemma \ref{localdistance1}, we shall omit its proof.
Noting that (2, 3, 4) can be replaced by (12, 13), the inequalities in (5) and (9)-(11) still hold. Therefore, the arguments can go through without any difficultes.
\section{Application}
In this section, we prove the local version of continuous extension and some results about the Teichm\"{u}ller space. At first we need a lemma of Gromov product.
\begin{lemma}\label{product}
Suppose that $\Omega$ is a convex domain with Dini-smooth boundary. Fixing $z_0 \in \Omega$, if $x_n\rightarrow\xi$ and $y_n\rightarrow\xi$, then we have
$$(x_n|y_n)_{z_0}\rightarrow\infty.$$
\end{lemma}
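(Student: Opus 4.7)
The plan is to work directly from the definition
$$(x_n|y_n)_{z_0} = \tfrac{1}{2}\bigl[K_\Omega(x_n, z_0) + K_\Omega(z_0, y_n) - K_\Omega(x_n, y_n)\bigr]$$
and show the right-hand side diverges. First I would bound $K_\Omega(x_n, y_n)$ from above. Since $\xi$ is a Dini-smooth boundary point and both $x_n, y_n \to \xi$, Lemma \ref{est} applies once $n$ is large enough and gives
$$K_\Omega(x_n, y_n) \leq \log\Bigl(1 + \frac{2|x_n - y_n|}{\sqrt{\delta_\Omega(x_n)\delta_\Omega(y_n)}}\Bigr).$$

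Next I would bound $K_\Omega(z_0, x_n)$ from below using the convex estimate of Proposition 2.2, in the form
$$K_\Omega(z_0, x_n) \geq \tfrac{1}{2}\log \frac{|z_0 - \xi_n|}{|x_n - \xi_n|},$$
where $\xi_n$ is a boundary point on the complex line $L_n$ through $z_0$ and $x_n$ chosen on the far side of $x_n$ from $z_0$. As $x_n \to \xi$, the line $L_n$ converges to the line $L_\infty$ through $z_0$ and $\xi$. Convexity forces $z_0$ to lie strictly on the interior side of the tangent hyperplane $T_\xi \subset \mathbb C^n$ to $\partial \Omega$ at $\xi$, while Dini-smoothness guarantees both the existence of $T_\xi$ and the continuity of the normal direction along $\partial \Omega$ near $\xi$. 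Hence $L_\infty$ meets $\partial \Omega$ transversally at $\xi$ with an angle $\theta_0$ bounded away from $\pi/2$, and for $n$ large $L_n$ meets $\partial \Omega$ at some $\xi_n \to \xi$ with angle close to $\theta_0$. Transversality then gives the uniform estimate $|x_n - \xi_n| \leq C\,\delta_\Omega(x_n)$, while $|z_0 - \xi_n| \to |z_0 - \xi| > 0$, so
$$K_\Omega(z_0, x_n) \geq \tfrac{1}{2}\log \frac{1}{\delta_\Omega(x_n)} + O(1),$$
and by the same argument, the analogous bound holds with $y_n$ in place of $x_n$.

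Combining the three estimates,
\begin{align*}
2(x_n|y_n)_{z_0} &\geq \log\frac{1}{\sqrt{\delta_\Omega(x_n)\delta_\Omega(y_n)}} - \log\frac{\sqrt{\delta_\Omega(x_n)\delta_\Omega(y_n)} + 2|x_n - y_n|}{\sqrt{\delta_\Omega(x_n)\delta_\Omega(y_n)}} + O(1) \\
&= \log \frac{1}{\sqrt{\delta_\Omega(x_n)\delta_\Omega(y_n)} + 2|x_n - y_n|} + O(1),
\end{align*}
and the right-hand side tends to $+\infty$ because $\delta_\Omega(x_n), \delta_\Omega(y_n) \to 0$ and $|x_n - y_n| \to 0$. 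I expect the main obstacle to be the uniform transversality bound $|x_n - \xi_n| \leq C\,\delta_\Omega(x_n)$; verifying it rigorously requires both Dini-smoothness (for a tangent hyperplane that is well defined and varies continuously near $\xi$) and convexity (to ensure $z_0 \notin T_\xi$, and that $L_n$ really does meet $\partial \Omega$ at a point close to $\xi$ on the far side of $x_n$).
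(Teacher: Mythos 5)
Your argument is correct and is essentially the proof the paper has in mind: the paper simply cites Zimmer's Theorem 4.1 and notes that its proof goes through once the $C^{1,\epsilon}$ upper bound on $K_\Omega(x_n,y_n)$ is replaced by the Dini-smooth estimate of Lemma \ref{est}, which is exactly the combination of upper and lower bounds you carry out. One remark: the key estimate $|x_n-\xi_n|\leq C\,\delta_\Omega(x_n)$ needs no Dini-smoothness or tangent-plane continuity at all --- since $B_r(z_0)\subset\Omega$ for some $r>0$ and $\Omega$ is convex, the cone over $B_r(z_0)$ with vertex $\xi_n$ lies in $\overline\Omega$ and contains a ball of radius $r\,|x_n-\xi_n|/\operatorname{diam}(\Omega)$ about $x_n$, which yields the bound directly and bypasses the transversality discussion you identify as the main obstacle.
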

\begin{proof}
In \cite[Theorem 4.1]{Zimmer2017The}, the domain $\Omega$ is assumed to be a $C^{1,\epsilon}$ convex domain. By using Lemma $\ref{est}$, one can repeat the proof with the weaker assumption.
\end{proof}
Next we give a result on the local version of continuous extensions
of roughly isometric embedding, which is a re-statement of Corollary
\ref{locallocal}.
\begin{cor}
Let $\Omega_1, \:\Omega_2\subset\mathbb{C}^n$ be bounded domains and $f:\Omega_1\rightarrow\Omega_2$ be a rough isometry with respect to the Kobayashi distance.
Suppose $\Omega_1\cap U_1$ is a log-type convex domain with Dini-smooth boundary and $\Omega_2\cap U_2$ is a log-type $\mathbb{C}$-convex domain with Dini-smooth boundary.

If $\xi\in\partial\Omega_1\cap U_1$, then $\zeta\in\partial\Omega_2\cap U_2$ and $\zeta\in\mathcal{C}(f,\xi)$,
then $f$ extends continuously to $\xi$.
\end{cor}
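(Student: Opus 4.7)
The plan is to argue by contradiction along the classical visibility/Gromov-product scheme. Suppose $f$ does not extend continuously to $\xi$; then one can find sequences $\{x_n\},\{y_n\}\subset\Omega_1$ with $x_n,y_n\to\xi$ and $f(x_n)\to\zeta$, $f(y_n)\to\zeta'$ for some $\zeta'\neq\zeta$ in $\overline{\Omega_2}$. Fix $z_0\in\Omega_1\cap U_1$. The lower Kobayashi bound on the convex domain $\Omega_1\cap U_1$, together with the localization Theorem \ref{localdistance}, forces $K_{\Omega_1}(y_n,z_0)\to\infty$, hence also $K_{\Omega_2}(f(y_n),f(z_0))\to\infty$ since $f$ is a rough isometry. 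In particular $\zeta'\in\partial\Omega_2$.

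The first real step is to push a large Gromov product into $\Omega_2$. Lemma \ref{product}, applied to the convex Dini-smooth domain $\Omega_1\cap U_1$, yields $(x_n|y_n)_{z_0}^{\Omega_1\cap U_1}\to\infty$. By Theorem \ref{localdistance}, $K_{\Omega_1}$ and $K_{\Omega_1\cap U_1}$ differ by a bounded constant on a neighborhood of $\xi$, so $(x_n|y_n)_{z_0}^{\Omega_1}\to\infty$ as well. Rough isometries preserve Gromov products up to an additive constant, and therefore $(f(x_n)|f(y_n))_{f(z_0)}^{\Omega_2}\to\infty$.

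Next I set up a truncation to feed into the local visibility property. Choose $\epsilon>0$ with $\overline{B_\epsilon(\zeta)}\subset U_2$ and $\zeta'\notin\overline{B_\epsilon(\zeta)}$. Take a $(1,\kappa)$-almost geodesic $\sigma_n:[0,T_n]\to\Omega_2$ from $f(x_n)$ to $f(y_n)$ by Lemma \ref{almost}, and let $t_n$ be its first exit time from $B_\epsilon(\zeta)$. Passing to a subsequence, $\sigma_n(t_n)\to w\in\partial B_\epsilon(\zeta)\cap\overline{\Omega_2}$, with $w\neq\zeta$. If $w\in\Omega_2$, then $\{\sigma_n(t_n)\}$ eventually lies in a compact subset of $\Omega_2$. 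If instead $w\in\partial\Omega_2$, then $w\in\partial\Omega_2\cap U_2$ since $U_2$ is open; Theorem \ref{c-convex} (combined with Theorem \ref{local 3} for the infinitesimal metric) shows that $\sigma_n|_{[0,t_n]}$ is an almost geodesic of $\Omega_2\cap U_2$ with slightly enlarged constants, and the local visibility property for log-type $\mathbb{C}$-convex Dini-smooth domains (noted in the paragraph after Theorem \ref{c-convex}) produces a compact $A\subset\Omega_2\cap U_2$ that $\sigma_n|_{[0,t_n]}$ must enter.

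In either case there is a point $p_n$ on $\sigma_n$ lying in a fixed compact subset of $\Omega_2$. The almost-geodesic near-equality $K_{\Omega_2}(f(x_n),p_n)+K_{\Omega_2}(p_n,f(y_n))\le K_{\Omega_2}(f(x_n),f(y_n))+3\kappa$ and the triangle inequality then give
$$(f(x_n)|f(y_n))_{f(z_0)}^{\Omega_2}\le K_{\Omega_2}(f(z_0),p_n)+\tfrac{3\kappa}{2},$$
which is uniformly bounded, contradicting the blow-up established in step two. I expect the main obstacle to be the case $w\in\partial\Omega_2$ (in particular when $\zeta'$ may lie outside $U_2$): one has to check carefully that the truncated arc $\sigma_n|_{[0,t_n]}$ qualifies as an almost geodesic of $\Omega_2\cap U_2$ with uniform constants, which requires Theorem \ref{c-convex} to control the distance and Theorem \ref{local 3} to control the ratio $k_{\Omega_2\cap U_2}/k_{\Omega_2}$ along the arc.
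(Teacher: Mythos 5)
Your proposal is correct and follows essentially the same route as the paper: transport the divergent Gromov product $(x_n|y_n)_{z_0}^{\Omega_1}\to\infty$ (obtained from Lemma \ref{product} plus the localization Theorem \ref{localdistance}) through the rough isometry, and contradict the uniform bound on $(f(x_n)|f(y_n))_{f(z_0)}^{\Omega_2}$ coming from the local visibility property in $\Omega_2\cap U_2$. Your truncation of the almost geodesic at the first exit from $B_\epsilon(\zeta)$ is in fact a more careful treatment of the case $\zeta'\notin U_2$ than the paper gives (which applies Lemma \ref{visibility} directly), and since the restriction of a $(1,\kappa)$-almost geodesic of $\Omega_2$ to a subinterval is again one, you do not even need Theorems \ref{c-convex} and \ref{local 3} for that step.
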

\begin{proof}
Note that, by definition, there exists $C > 0$ such that for any $x,y\in\Omega_1$ $$K_{\Omega_1}(x,y)-C\leq K_{\Omega_2}(f(x),f(y))\leq K_{\Omega_1}(x,y)+C.$$
Therefore, by fixing $z_0\in\Omega_1\cap U_1$,
$$\left|(f(x)|f(y))_{f(z_0)}^{\Omega_2}-(x|y)_{z_0}^{\Omega_1}\right|\leq\frac{3}{2}C.$$
If $x_n\rightarrow \xi$, $y_n\rightarrow \xi$ and $f(x_n)\rightarrow\zeta$, then we will show that $f(y_n)\rightarrow\zeta$.

Conversely, suppose that $f(y_n)\rightarrow\zeta'\in\partial\Omega_2$ and $\zeta\neq\zeta'$. From Lemma $\ref{visibility}$, it follows that there is a compact set $A\subset\Omega_2\cap U_2$ such that the $(1,\kappa)$-almost geodesic $\sigma_n(t)$ joining $f(x_n)$ and $f(y_n)$ intersects $A$. Fix $t_n\in $dom$(\sigma_n)$ such that $\sigma_n(t_n)\in A$.
Letting $R=\max\limits_{z\in A}K_{\Omega_2}(z,f(z_0))$, then we have
\begin{align*}
\Big(&f(x_n)|f(y_n)\Big)_{f(z_0)}^{\Omega_2}\\
&= \frac{1}{2}\left(K_{\Omega_2}(f(x_n),f(z_0))+K_{\Omega_2}(f(y_n),f(z_0))-K_{\Omega_2}(f(x_n),f(y_n))\right)\\
&\leq\frac{1}{2}\big(K_{\Omega_2}(f(x_n),f(z_0))+K_{\Omega_2}(f(y_n),f(z_0))-K_{\Omega_2}(f(x_n),f(\sigma_n(t_n)))\\
&-K_{\Omega_2}(f(\sigma_n(t_n)),f(y_n))\big)+\kappa\\
&\leq K_{\Omega_2}(f(z_0),f(\sigma_n(t_n)))+\kappa+R\\
&<\infty.
\end{align*}
On the other hand, in view of Theorem \ref{localdistance},
$$(x_n|y_n)_{z_0}^{\Omega_1\cap U_1}-K\leq(x_n|y_n)_{z_0}^{\Omega_1}\leq(x_n|y_n)_{z_0}^{\Omega_1\cap U_1}+K,$$
Then by Lemma \ref{product}, we have
$$(x_n|y_n)_{z_0}^{\Omega_1\cap U_1}\rightarrow\infty,$$
Thus
$$(x_n|y_n)_{z_0}^{\Omega_1}\rightarrow\infty,$$
which is a contradiction.

Therefore we have $\lim\limits_{z\rightarrow\xi}f(z)=\zeta$, which completes the proof.
\end{proof}
\begin{rmk}
 Note that in the above proof we only need $\Omega_2\cap U_2$ to be 'visible'. By using Lemma \ref{visibility}, the condition 'Dini-smooth' of $\Omega_2$ can be removed if $\omega_2\cap U_2$ is log-type convex instead of log-type $\mathbb{C}$-convex.
\end{rmk}

Next we prove a result on domains biholomorphic to the
$Teichm\ddot{u}ller$ space $T_{g, n}$. At first we need some lemmas.
\begin{defn}
For any domain $\Omega \subset \mathbb C^n$, a boundary point $\xi\in\partial\Omega$ is Alexandroff smooth if

(i) $\Omega$ is locally convex at $\xi$;

(ii) there exists $r>0$ such that $\Omega \cap B(\xi, r)$ is convex and $\partial \Omega \cap B(\xi, r)$ is the graph of a convex function $\psi: U \cap V \rightarrow \mathbb{R}_{+}$ which has a second order Taylor expansion at $\xi$. That is, if we assume without loss of generality that $\xi=0$ and $V=\left\{x_{n}=0\right\}$ is a supporting hyperplane for $\Omega \cap B(\xi, r)$, then we have
$$\psi\left(x_{1}, x_{2}, \ldots x_{n}\right)=\frac{1}{2} \sum_{i, j} H_{i, j} x_{i} x_{j}+o\left(\|x\|^{2}\right)$$
for some $n \times n$ symmetric matrix $H$ (which, for a genuine $C^{2}$ -function, is the
Hessian).
\end{defn}
\begin{thm}[Alexandroff\cite{alexandroff1939almost}]\label{second}
If $\Omega$ is a convex domain, then almost every boundary point is Alexandroff smooth in the above sense.
\end{thm}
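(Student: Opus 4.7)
This is Alexandroff's classical 1939 result, so the paper is merely citing it and will not supply a proof; still, let me sketch how the standard argument goes. The plan is to localize near a boundary point $\xi \in \partial \Omega$ using convexity to represent $\partial \Omega$ (after a rigid motion) as the graph of a convex function $\psi \colon V \subset \mathbb{R}^{n-1} \to \mathbb{R}_+$ with $\psi(0)=0$. The task then reduces to proving that a convex function on an open subset of $\mathbb{R}^{n-1}$ admits a second-order Taylor expansion at almost every point; integrating along the boundary chart then produces the Alexandroff-smooth points of full Hausdorff measure on $\partial \Omega$.

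For the reduced analytic statement, I would follow the classical measure-theoretic route. First, a convex function is locally Lipschitz on the interior of its domain, so by Rademacher's theorem $\nabla \psi$ exists $\mathcal{L}^{n-1}$-a.e.; let $E$ be the full-measure set on which $\psi$ is differentiable. Next, using the fact that the distributional Hessian $D^2 \psi$ of a convex function is a positive semi-definite matrix-valued Radon measure (the Hessian measure), one decomposes $D^2 \psi$ via the Lebesgue--Radon--Nikodym theorem into an absolutely continuous part with density $H(x)$ (a symmetric positive semidefinite matrix for a.e. $x$) and a singular part. For Lebesgue-a.e. $x$, the singular part has vanishing density at $x$, and the subdifferential map $\partial \psi$ is approximately differentiable at $x$ with derivative $H(x)$.

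The remaining step is to upgrade this approximate differentiability of $\nabla \psi$ into a pointwise second-order Taylor expansion
\[
\psi(x+h) = \psi(x) + \nabla \psi(x)\cdot h + \tfrac{1}{2} h^T H(x) h + o(|h|^2).
\]
Convexity makes this upgrade possible: one-sided estimates from the subgradient inequality allow the approximate $o$-bound for $\nabla \psi$ to be integrated along line segments while controlling the exceptional set where the linear approximation of the gradient fails. The classical trick is to write $\psi(x+h)-\psi(x)-\nabla\psi(x)\cdot h$ as $\int_0^1 (\nabla \psi(x+th)-\nabla\psi(x))\cdot h\, dt$ and bound the integrand using the approximate differentiability, where the convexity is used to replace the approximate bound by a pointwise bound on a set of full measure in $h$.

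The main obstacle is precisely this last passage from a measure-theoretic approximate differentiability of $\nabla \psi$ to a genuine pointwise Taylor expansion of $\psi$; this is where convexity (through monotonicity of $\partial \psi$ and the subgradient inequality) is essential and where Alexandroff's original argument and its modern refinements (e.g., as in Evans--Gariepy or Rockafellar) invest most of the work. For the purposes of the present paper, however, the statement will be invoked only as a black box to select a good boundary point of $\partial \Omega$ at which to perform the subsequent scaling arguments for the Teichm\"uller application.
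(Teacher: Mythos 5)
The paper offers no proof of this statement: it is quoted verbatim as Alexandroff's classical theorem with a citation to the 1939 paper, exactly as you anticipated. Your sketch of the standard argument (Rademacher, the Lebesgue--Radon--Nikodym decomposition of the Hessian measure, and the convexity-based upgrade from approximate differentiability of $\nabla\psi$ to a pointwise second-order expansion) is the accepted modern route and is consistent with how the result is used later in the paper, namely only as a black box to produce a boundary point with an interior sphere contact in Lemma \ref{sphere}.
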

By using Theorem $\ref{second}$, one can prove the following lemma directly.
\begin{lemma}\label{sphere}
Let $\Omega \subset \mathbb{R}^{n}$ be a domain and let $\xi \in \partial \Omega$ be an Alexandroff smooth point. Then $\xi$ has an interior sphere contact. Namely there is
a round sphere $S$ contained in $\overline{\Omega}$ such that $S \cap \overline{\Omega}=\{\xi\}.$
\end{lemma}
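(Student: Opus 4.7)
The plan is to work in the local coordinates furnished by the Alexandroff expansion and to explicitly construct an interior tangent ball of small enough radius. After translating and rotating so that $\xi = 0$ and $V = \{x_n = 0\}$ is the supporting hyperplane, a neighborhood of $\xi$ in $\partial\Omega$ is the graph $x_n = \psi(x')$, where $x' = (x_1,\ldots,x_{n-1})$ and $\psi(x') = \tfrac{1}{2}(x')^{\top} H x' + o(\|x'\|^2)$ with $H$ symmetric. Local convexity of $\Omega$ at $\xi$ forces $H \succeq 0$, and $\Omega$ lies locally in the epigraph $\{x_n > \psi(x')\}$.

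For a parameter $\rho > 0$ to be chosen, I would take the closed ball $B_\rho$ centered at $(0,\rho)\in\mathbb{R}^n$, with boundary sphere $S_\rho$. Its lower hemisphere is the graph $x_n = \rho - \sqrt{\rho^2 - \|x'\|^2} = \frac{\|x'\|^2}{2\rho} + O(\|x'\|^4)$, so $B_\rho$ is tangent to $V$ at the origin and lies in $\{x_n \geq 0\}$. Let $\lambda_{\max}$ denote the largest eigenvalue of $H$ and fix any $\rho$ with $0 < \rho < 1/\lambda_{\max}$ (with the convention $1/0 = +\infty$). Then
\begin{equation*}
\bigl(\rho - \sqrt{\rho^2 - \|x'\|^2}\bigr) - \psi(x') = \tfrac{1}{2}\,(x')^{\top}\!\bigl(\tfrac{1}{\rho}I - H\bigr) x' + o(\|x'\|^2),
\end{equation*}
and since $\tfrac{1}{\rho}I - H$ is strictly positive definite, the right-hand side is strictly positive on a punctured neighborhood of $0$. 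Thus the lower hemisphere of $S_\rho$ strictly dominates the graph of $\psi$ off the origin, where the two are tangent. Shrinking $\rho$ further if necessary so that $B_\rho \subset B(\xi,r)$ (with $B(\xi,r)$ the ball on which the Alexandroff chart is valid) rules out any intersection of $S_\rho$ with $\partial\Omega$ outside this chart, while the upper hemisphere of $S_\rho$ automatically sits in $\{x_n > 0\} \subset \Omega$ near $\xi$. Combining, $S_\rho \subset \overline{\Omega}$ and $S_\rho \cap \partial\Omega = \{\xi\}$, which is the desired interior sphere contact.

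The principal obstacle is handling the $o(\|x'\|^2)$ remainder cleanly enough to extract a strict inequality from the positive-definiteness of $\tfrac{1}{\rho}I - H$. This is the standard fact that any $o(\|x'\|^2)$ term is eventually dominated, on a sufficiently small punctured neighborhood of $0$, by any strictly positive quadratic form. A secondary technicality, keeping $B_\rho$ inside the Alexandroff chart so that the global boundary cannot create spurious intersections with $S_\rho$, is addressed by a further reduction of $\rho$.
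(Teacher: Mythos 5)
Your construction is correct and is exactly the direct argument the paper intends (the paper offers no written proof, saying only that the lemma follows ``directly'' from Alexandroff's theorem): tangency of a small ball at $\xi$, comparison of its lower hemisphere with $\psi$ via the positive definite form $\tfrac{1}{\rho}I-H$, and a final shrinking of $\rho$ so that the ball stays both inside the punctured neighborhood where the $o(\|x'\|^2)$ remainder is dominated and inside the Alexandroff chart. One small slip: $\{x_n>0\}$ is \emph{not} contained in $\Omega$ (the inclusion goes the other way, since $\psi\ge 0$), but your upper hemisphere is handled anyway because every point of $B_\rho$ satisfies $x_n\ge \rho-\sqrt{\rho^2-\|x'\|^2}$, so the lower-hemisphere comparison already yields $B_\rho\setminus\{\xi\}\subset\{x_n>\psi(x')\}$ and hence the interior contact.
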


Note that the Teichm\"uller modular group $Mod_{g, n}$ isometrically acts on $T_{g, n}$ and this action is properly discontinuous. Furthermore, we have
\begin{prop}[\cite{knudsen1977projectivity}]
The $Teichm\ddot{u}ller$ space admits a finite volume quotient in the Kobayashi metric, i.e. the quotient space $T_{g, n}/Mod_{g, n}$ has finite Kobayashi volume.
\end{prop}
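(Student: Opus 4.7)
The strategy is to exploit the Deligne--Mumford--Knudsen compactification $\overline{\mathcal{M}}_{g,n}$ of the moduli space $\mathcal{M}_{g,n}:=T_{g,n}/\mathrm{Mod}_{g,n}$ and to analyze the Kobayashi--Eisenman pseudo-volume form $\kappa_{\mathcal{M}_{g,n}}$ locally near the boundary divisor. By Knudsen's result \cite{knudsen1977projectivity}, $\overline{\mathcal{M}}_{g,n}$ is a compact projective variety whose boundary $D=\overline{\mathcal{M}}_{g,n}\setminus\mathcal{M}_{g,n}$ is a normal crossings divisor. Passing to a finite smooth Galois cover (e.g.\ the moduli of curves equipped with a sufficiently fine level structure) does not affect finiteness of total volume, so we may treat $\mathcal{M}_{g,n}$ as a complex manifold.

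First I would cover $\overline{\mathcal{M}}_{g,n}$ by finitely many coordinate patches. On an interior chart $U\subset\mathcal{M}_{g,n}$ the form $\kappa_{\mathcal{M}_{g,n}}$ is a smooth volume form, so its integral over any relatively compact subset of $U$ is finite. Near a boundary point $[\Sigma]\in D$ corresponding to a nodal surface with $k$ nodes, the plumbing construction provides holomorphic coordinates $(t_1,\ldots,t_k,s_1,\ldots,s_{d-k})$, where $d=3g-3+n$, on a neighborhood $U$ of $[\Sigma]$ in $\overline{\mathcal{M}}_{g,n}$ such that $D\cap U=\{t_1\cdots t_k=0\}$ and $U\cap\mathcal{M}_{g,n}$ is biholomorphic to $(\Delta^*)^k\times\Delta^{d-k}$.

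Next I would estimate $\kappa_{\mathcal{M}_{g,n}}|_U$ by invoking the decreasing property of the Kobayashi--Eisenman form under holomorphic maps: the open inclusion $U\cap\mathcal{M}_{g,n}\hookrightarrow\mathcal{M}_{g,n}$ yields
\begin{equation*}
\kappa_{\mathcal{M}_{g,n}}\bigr|_{U\cap\mathcal{M}_{g,n}} \;\leq\; \kappa_{(\Delta^*)^k\times\Delta^{d-k}}.
\end{equation*}
On each $\Delta^*$ factor the Kobayashi metric equals the Poincar\'e metric $|dz|/(|z|\log(1/|z|))$, and on a product of hyperbolic factors the Kobayashi--Eisenman $d$-form is bounded above by the wedge of the 1-dimensional Poincar\'e area forms on the factors. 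The change of variable $u=\log(1/|z|)$ gives
\begin{equation*}
\int_{\{|z|<1/2\}}\frac{dA(z)}{|z|^2\log^{2}(1/|z|)} \;=\; 2\pi\int_{\log 2}^{\infty}\frac{du}{u^{2}}\;<\;\infty,
\end{equation*}
so the model volume form is integrable on the relevant neighborhood of $0\in(\Delta^*)^k\times\Delta^{d-k}$.

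Finally I would sum over a finite subcover of $\overline{\mathcal{M}}_{g,n}$ afforded by compactness to conclude that the total Kobayashi--Eisenman volume of $\mathcal{M}_{g,n}$ is finite. The main obstacle lies in the third step: $\kappa_{\mathcal{M}_{g,n}}$ has no closed-form description, so one must carefully compare it to a tractable local model via the distance-decreasing property and verify that the model Kobayashi--Eisenman form on $(\Delta^*)^k\times\Delta^{d-k}$ is indeed dominated by a product of 1-dimensional Poincar\'e area forms; after this comparison the integrability reduces to the elementary 1-dimensional estimate above.
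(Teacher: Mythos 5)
The paper does not actually prove this proposition: it is stated with a bare citation to Knudsen's projectivity theorem for $\overline{\mathcal M}_{g,n}$, so there is no in-paper argument to compare against. Your proposal supplies the standard proof that this citation is implicitly invoking, and it is essentially correct: compactify, reduce to the local model $(\Delta^*)^k\times\Delta^{d-k}$ at a normal-crossings boundary point, use monotonicity of the Kobayashi--Eisenman form under the open inclusion (your inequality points the right way: for the holomorphic inclusion $\iota$ of the chart into $\mathcal M_{g,n}$ one has $\iota^*\kappa_{\mathcal M_{g,n}}\le\kappa_{\text{chart}}$), and finish with the elementary integral $\int_{|z|<1/2}|z|^{-2}\log^{-2}(1/|z|)\,dA<\infty$. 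The step you flag as the ``main obstacle'' is in fact standard and not an obstacle: given $f_i:\Delta\to X_i$ competing for the Kobayashi--Royden metric on each factor, the product map $\Delta^d\to\prod X_i$ restricted to $\mathbb B^d\subset\Delta^d$ is a competitor for the Eisenman $d$-measure of the product, which yields $\kappa_{\prod X_i}\le C\bigwedge_i\kappa_{X_i}$ (Graham--Wu); combined with the explicit Kobayashi metric on $\Delta^*$ this closes the argument. Two minor points to make explicit: (i) the plumbing chart around a stable curve is in general only a \emph{finite quotient} of $(\Delta^*)^k\times\Delta^{d-k}$, so either pass to a level cover as you do (for level $m\ge 3$ the compactified moduli space is a smooth projective variety with normal-crossings boundary) or note that integrating an invariant form over a finite quotient is dominated by the integral over the cover; and (ii) the product bound holds only up to a dimensional constant, which is harmless for finiteness. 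With those caveats spelled out, your argument is a complete and correct substitute for the citation.
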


\begin{lemma}[Theorem 2,\cite{Kerckhoff1983The}]\label{finite}
Every finite subgroup of $Mod_{g, n}$ acting on $T_{g, n}$ has a fixed point.
\end{lemma}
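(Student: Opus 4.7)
The plan is to follow Kerckhoff's original argument, which exploits convexity of hyperbolic length functions along earthquake paths. Fix a finite subgroup $G \subset Mod_{g,n}$. First I would choose a finite collection $\Gamma_0=\{\gamma_1,\dots,\gamma_N\}$ of simple closed curves that \emph{fills} the underlying surface $S$ (every complementary region is a topological disk or once-punctured disk), and then replace $\Gamma_0$ by its $G$-orbit $\Gamma=G\cdot\Gamma_0$, obtaining a finite $G$-invariant filling system.

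With this choice, set
$$L(X)=\sum_{\gamma\in\Gamma}\ell_X(\gamma), \qquad X\in\mathcal T_{g,n},$$
where $\ell_X(\gamma)$ is the length of the geodesic representative of $\gamma$ in the hyperbolic structure $X$. The invariance $L(g\cdot X)=L(X)$ is immediate from $G$-invariance of $\Gamma$. Next I would verify that $L$ is proper: if $X_n\to\infty$ in $\mathcal T_{g,n}$, Mumford's compactness criterion yields a simple closed curve $\alpha$ with $\ell_{X_n}(\alpha)\to 0$; since $\Gamma$ fills $S$, some $\gamma\in\Gamma$ has positive geometric intersection number with $\alpha$, and the Collar Lemma then forces $\ell_{X_n}(\gamma)\to\infty$, so $L(X_n)\to\infty$.

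The decisive step is Kerckhoff's convexity theorem: for any simple closed curve $\gamma$ and any measured lamination $\lambda$ not supported on $\gamma$, the function $t\mapsto \ell_{E^t_\lambda(X)}(\gamma)$ is strictly convex (indeed real-analytic) along the earthquake flow $E^t_\lambda$. Combined with Thurston's earthquake theorem---any two points of $\mathcal T_{g,n}$ are joined by a unique earthquake path---this makes $L$ strictly convex along every earthquake path. Together with properness, strict convexity yields a unique minimizer $X_0\in\mathcal T_{g,n}$. For each $g\in G$ the point $g\cdot X_0$ is also a minimizer of $L$, so by uniqueness $g\cdot X_0 = X_0$, and $X_0$ is the desired common fixed point.

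The hard part is Kerckhoff's convexity theorem itself, whose proof constitutes the bulk of the cited paper and requires a delicate infinitesimal analysis of how earthquake deformations affect geodesic length, carried out via Gauss--Bonnet applied to hyperbolic polygons built from earthquake strata. A conceptually cleaner alternative would be to work in the Weil--Petersson completion $\overline{\mathcal T_{g,n}}^{\,WP}$, which is CAT(0) by Wolpert and Yamada; the circumcenter of the finite $G$-orbit of any interior point gives a fixed point, provided one rules out that it lies in a boundary stratum. This uses that finite-order elements of $Mod_{g,n}$ preserve each Weil--Petersson boundary stratum setwise, so a minimizer on a stratum would yield a $G$-fixed point on a lower-complexity Teichm\"uller space, allowing an induction on complexity to push the fixed point back into $\mathcal T_{g,n}$.
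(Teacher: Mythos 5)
The paper does not prove this statement; it is quoted verbatim as Theorem 2 of Kerckhoff's paper on the Nielsen realization problem. Your sketch is a faithful outline of Kerckhoff's own argument (a $G$-invariant filling curve system, properness of the total length function via Mumford compactness and the collar lemma, and strict convexity along earthquake paths combined with Thurston's earthquake theorem to get a unique, hence $G$-fixed, minimizer), so it matches the cited source; the Weil--Petersson CAT(0) alternative you mention is likewise a standard later proof due to Wolpert.
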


Then we can follow the methods of Zimmer \cite{Zimmer2018Smoothly} and Gupta-Seshadri \cite{Gupta2017On} to prove the following result, which is a re-statement of Theorem
\ref{Teichmuller}.
\begin{thm}\label{teichmuller}
$Teichm\ddot{u}ller$ space $T_{g, n}$ can not be biholomorphic to a bounded domain in $\mathbb C^{3g-3+n}$ which is locally log-type convex at some boundary point.
\end{thm}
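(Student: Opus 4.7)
The plan is to argue by contradiction, following the approach of Zimmer \cite{Zimmer2018Smoothly} adapted to the log-type convex setting via Theorem \ref{localdistance}. Suppose $F:\mathcal T_{g,n}\to\Omega$ is a biholomorphism where $\Omega\subset\mathbb C^{3g-3+n}$ is bounded, pseudoconvex, and $\Omega\cap U$ is log-type convex for some open $U$ meeting $\partial\Omega$. Via $F$ the group $\Gamma:=\mathrm{Mod}_{g,n}$ acts on $\Omega$ properly discontinuously by biholomorphisms (hence by Kobayashi isometries), the Kobayashi-Eisenman volume of the quotient is finite, and by the Bers embedding the squeezing function satisfies $s_\Omega\ge s_0>0$. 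As a preliminary, Alexandroff's Theorem \ref{second} applied to the convex set $\Omega\cap U$ supplies an Alexandroff-smooth boundary point $\xi\in\partial\Omega\cap U$, and Lemma \ref{sphere} then yields an interior sphere contact at $\xi$. The interior sphere gives the upper estimate $K_\Omega(z,z_0)\le\tfrac{1}{2}|\log\delta_\Omega(z)|+O(1)$; combining it with Proposition \ref{mercer} (applied on $\Omega\cap U$), Theorem \ref{localdistance}, and Lemmas \ref{visibility} and \ref{product}, I obtain strong boundary control at $\xi$: for any $z_k,w_k\to\xi$ the Gromov product $(z_k\mid w_k)_{z_0}\to\infty$, and any $(1,\kappa)$-almost-geodesic between points near $\xi$ stays near $\xi$.

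Next I would exhibit an orbit of $\Gamma$ converging precisely to $\xi$. Pick an infinite-order element $\gamma\in\Gamma$ (for instance a pseudo-Anosov). By Lemma \ref{squeezing} every Kobayashi ball of fixed radius in $\Omega$ has Kobayashi-Eisenman volume bounded below by a positive constant depending only on $s_0$; since $\Omega/\Gamma$ has finite volume, each compact subset of $\Omega$ contains only finitely many orbit points of $\gamma$. Hence $\gamma^n z_0\to\partial\Omega$, and after passing to a subsequence $\gamma^{n_k}z_0\to\eta_+\in\partial\Omega$ and $\gamma^{-n_k}z_0\to\eta_-\in\partial\Omega$. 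Composing with further elements $h_k\in\Gamma$ chosen to move each approximating point into the relatively compact log-type convex piece $V\Subset\Omega\cap U$, I would produce a sequence $\alpha_k\in\Gamma$ of pairwise distinct elements with $\alpha_k z_0\to\xi$.

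The final step is to derive a contradiction. Since every $\alpha_k$ is a Kobayashi isometry and $\alpha_k z_0\to\xi$, a Wolff-Denjoy style argument built on the Gromov-product convergence and local visibility at $\xi$ forces $\alpha_k$ to converge to the constant map $\xi$ uniformly on compacta. Following \cite{Zimmer2018Smoothly}, I would then rescale the $\alpha_k$ at $\xi$, using the second-order Alexandroff data (the Hessian $H$) together with the log-type convex decay rate of Definition \ref{logconvex} to control the rescaled Kobayashi metrics, and extract a nontrivial limit biholomorphism of the quadric model domain attached to $H$. The model has a rigid finite-dimensional automorphism group, so only finitely many distinct limits can arise; together with Lemma \ref{finite} this forces a finite-index subgroup of $\Gamma$ to fix a point of $\mathcal T_{g,n}$, which is incompatible with the existence of the infinite-order element $\gamma$.

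The main obstacle is precisely the rescaling step: one must verify that the rescaled Kobayashi metrics converge to the Kobayashi metric of the quadric model under only the log-type convex hypothesis, rather than under finite-type or $C^{1,\epsilon}$ boundary regularity. This is where the local estimate Theorem \ref{localdistance} plays the decisive role, because it is what lets one transfer Kobayashi estimates from $\Omega$ to the local convex piece $\Omega\cap U$ throughout the scaling, and thereby certify that enough of the $\alpha_k$ yield nontrivial automorphisms of the limit model. All remaining steps (the volume/squeezing escape of orbits, the Gromov-product rigidity, and the application of Lemma \ref{finite}) then proceed as in \cite{Zimmer2018Smoothly, Gupta2017On}, but without invoking the ergodicity of the Teichm\"uller geodesic flow.
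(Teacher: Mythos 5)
Your overall architecture matches the paper's: contradiction, pass to the group $\Gamma$ acting on $\Omega$ with finite-volume quotient and uniform squeezing, locate an Alexandroff-smooth boundary point $\xi\in\partial\Omega\cap U$ with interior sphere contact via Theorem \ref{second} and Lemma \ref{sphere}, produce an orbit accumulating at $\xi$, and finish by a scaling argument at $\xi$. However, there is a genuine gap at the crucial middle step: the construction of a sequence $\alpha_k\in\Gamma$ with $\alpha_k z_0\to\xi$. You take a fixed infinite-order $\gamma$, observe that $\gamma^{n}z_0$ escapes to $\partial\Omega$ with limit points $\eta_\pm$, and then assert that ``composing with further elements $h_k$'' moves the approximating points toward $\xi$. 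Nothing guarantees such $h_k$ exist: the points $\eta_\pm$ are determined by $\gamma$ and have no relation to the chosen $\xi$, and a priori the closure of the orbit $\Gamma z_0$ in $\partial\Omega$ could avoid $\xi$ entirely. This is exactly what the paper's Claim has to prove, and it is the technical heart of the argument. The paper starts instead from an arbitrary sequence $y_n\to\xi$ \emph{inside} $\Omega$, sets $\delta_n=\min_{\gamma\neq \mathrm{id}}K_\Omega(y_n,\gamma y_n)$, and uses the squeezing-function volume lower bound for embedded Kobayashi balls (Lemma \ref{squeezing}) against the finiteness of $\mathrm{vol}(\Omega/\Gamma)$ to split into cases: if $\delta_n\not\to 0$ then $\pi(y_n)$ is relatively compact and one gets $\gamma_n^{-1}z_0\to\xi$; if $\delta_n\to 0$ one takes the minimizing elements $\gamma_n$ and uses the local visibility Lemma \ref{visibility} together with a Gromov-product/Busemann-function computation to force $\gamma_n z_0\to\xi$ (with a separate subcase when only finitely many distinct $\gamma_n$ occur, handled via Lemma \ref{finite}). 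None of this case analysis appears in your proposal, and without it the orbit-accumulation step is unsupported.

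Two smaller points. First, $\mathrm{Mod}_{g,n}$ does not act freely; the paper passes to a finite-index torsion-free subgroup so that $\Omega/\Gamma$ is a manifold of finite volume, which you should do as well before invoking volume counts. Second, your endgame (rescale at $\xi$, land in a quadric model, use rigidity of its automorphism group plus Lemma \ref{finite}) differs from the paper's, which rescales at the interior-sphere-contact point following Kim--Krantz--Pinchuk and Gupta--Seshadri to produce a one-parameter subgroup of $\mathrm{Mod}_{g,n}$, contradicting discreteness. Both endgames defer to the scaling literature and either could in principle be made to work, but your stated deduction (``finitely many distinct limits forces a finite-index subgroup to fix a point'') is not a consequence of Lemma \ref{finite} as written and would need to be reworked; the cleaner contradiction is with proper discontinuity or discreteness, as in the paper.
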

\begin{proof}
Assume, by contradiction, that the $Teichm\ddot{u}ller$ space $T_{g, n}$ is biholomorphic to $\Omega\subset \mathbb C^{3g-3+n}$ such that
$\Omega \cap U$ is log-type convex at some boundary point neighborhood $U$.
It is well know that $Mod_{g, n}$ has a finite index subgroup $N$ that acts freely on $T_{g, n}$. Let $\Gamma \subset Aut(\Omega)$ be the subgroup corresponding to $N \subset Mod_{g, n}$. Then the quotient space $\Omega/\Gamma$ is a manifold with finite $K\ddot{a}hler-Einstein$ volume.

From Lemma $\ref{sphere}$, it follows that we can choose a boundary point in $U\cap \partial \Omega$, denoted by $\xi$, which admits an interior sphere contact.
Then we will prove the following

({\bf Claim}) There exist $\left\{\gamma_{n}\right\} \subseteq \Gamma$ such that $\gamma_{n}\left(z_{0}\right) \rightarrow \xi$ for some fixed point $z_0 \in \Omega$.

\bigskip
Therefore, by using the scaling method of Kim-Krantz-Pinchuk
(refer to section 9.2.5 of \cite{greene2011geometry}), and  in a similar way as used by Gupta-Seshadri in
Section 3.4 of \cite{Gupta2017On}, we obtain that $Mod_{g, n}$ has a one parameter subgroup which contradicts to the discreteness of
$Mod_{g, n}$.

It completes the proof of Theorem \ref{teichmuller}.
\end{proof}
\bigskip

\textit{Proof of the Claim.}

For any $\Omega \supset \{y_{n}\} \rightarrow \xi$, define
$$\delta_{n}=\min _{\gamma \in \Gamma \backslash\{i d\}} K_{\Omega}\left(y_{n}, \gamma y_{n}\right).$$
Then the quotient map $\pi : \Omega \rightarrow \Omega / \Gamma$
restricts to an embedding on the ball $B_{n}=\left\{z \in \Omega : K_{\Omega}\left(z, y_{n}\right)<\frac{\delta_{n}}{2}\right\} .$ The $Teichm\ddot{u}ller$ space admits the uniform squeezing property, so does $\Omega$. From Lemma $\ref{squeezing}$, it follows that
there exists $C$ and $\epsilon$ such that
$$\operatorname{vol}\left(\pi\left(B_{n}\right)\right) \geq C \min \left\{\epsilon^{2 n}, \delta_{n}^{2 n}\right\}.$$
After passing to a subsequence we can assume that
$$\lim _{n \rightarrow \infty} \delta_{n}=\delta \geq 0.$$
Now we have the following three cases $(a), (b), (c)$:

\bigskip
(a) If $\delta>0$, then the set $\left\{\pi\left(y_{n}\right) : n \in \mathbb{N}\right\}$ must be relatively compact in $\Omega / \Gamma$. Otherwise, $\bigcup\limits_{n\in\mathbb{N}} B_{n}$
would admit infinite volume. Therefore, for each $n$, we can get $\gamma_n \in \Gamma$ such that $\gamma_ny_n\rightarrow z_0 \in \Omega$, which implies that $\gamma_{n}^{-1} z_0 \rightarrow \xi$, as desired.

\bigskip
(b) If $\delta=0$ and the set $\{\gamma_{n}\}_{n\geq 1}$ is infinite, noting that $\Gamma$ is discrete, we can assume that
$\gamma_{n} z_{0} \rightarrow \xi' \in \partial \Omega$ up to a subsequence for some fixed $z_{0}$. We claim that $\xi'=\xi$.
Suppose that $\xi\neq \xi' \in \partial \Omega$. By lemma $\ref{visibility}$, it follows that there exists compact set $A$ such that $\sigma_n\cap A\neq\emptyset$, where $\sigma_n$ joining $y_n$ and $\gamma_nz_0$ with $\sigma_n(t_n)\in A$.
Noting that $K_{\Omega}\left(z_{0}, \gamma_{n} z_{0}\right) \rightarrow \infty$, we thus have
\begin{align*}
K_{\Omega}&\left(\gamma_{n} z_{0}, z_{0}\right)+K_{\Omega}\left(z_{0}, y_{n}\right)-K_{\Omega}\left(\gamma_{n} z_{0}, y_{n}\right)\\
&\leq K_{\Omega}\left(\gamma_{n} z_{0}, z_{0}\right)+K_{\Omega}\left(z_{0}, y_{n}\right)-K_{\Omega}\left(\gamma_{n} z_{0},\sigma_n(t_n)\right)-K_{\Omega}\left(\sigma_n(t_n),y_n\right)\\
&\leq 2K_{\Omega}\left(\sigma_n(t_n),z_0\right)\leq R,
\end{align*}
where $R=2\max\{K_{\Omega}(z_0,z):z\in A\}$.
However,
\begin{align*}
K_{\Omega}\left(\gamma_{n} z_{0}, z_{0}\right) &+K_{\Omega}\left(z_{0}, y_{n}\right)-K_{\Omega}\left(\gamma_{n} z_{0}, y_{n}\right) \\ &=K_{\Omega}\left(\gamma_{n} z_{0}, z_{0}\right)+K_{\Omega}\left(z_{0}, y_{n}\right)-K_{\Omega}\left(z_{0}, \gamma_{n}^{-1} y_{n}\right) \\ & \geq K_{\Omega}\left(\gamma_{n} z_{0}, z_{0}\right)-K_{\Omega}\left(y_{n}, \gamma_{n}^{-1} y_{n}\right) \rightarrow \infty,
\end{align*}
from which we deduce that $\xi'=\xi$.

\bigskip
(c) If the set $\{\gamma_{n}\}_{n\geq 1}$ is finite, without loss of generality we assume $\gamma_{n}=\gamma$ for all $n \in \mathbb{N}$.
If $\gamma^{n}$ is relatively compact, noting that $\Gamma$ is discrete, then $\gamma$ is of finite order. Lemma $\ref{finite}$ immediately
implies that $\gamma$ has a fixed point in $\Omega$, which is impossible.

Therefore, $\gamma^{n} z_{0}\rightarrow \xi'\in \partial \Omega$. We assume that $\xi'\neq\xi \in \partial \Omega$. Using the fixed point $z_{0}$, we consider the function
$$b_{n}(z)=K_{\Omega}\left(z, y_{n}\right)-K_{\Omega}\left(y_{n}, z_{0}\right).$$
Since $b_{n}\left(z_{0}\right)=0$ and each $b_n(z)$ is $1$-Lipschitz, we can pass to a subsequence such
that $b_{n} \rightarrow b$ locally uniformly. Therefore
$$b(\gamma z)=\lim _{n \rightarrow \infty} K_{\Omega}\left(\gamma z, y_{n}\right)-K_{\Omega}\left(y_{n}, z_{0}\right)=\lim _{n \rightarrow \infty} K_{\Omega}\left(z, \gamma^{-1} y_{n}\right)-K_{\Omega}\left(y_{n}, z_{0}\right)=b(z),$$
which implies that $b\left(\gamma^{n} z_{0}\right)=b\left(z_{0}\right)=0$ for all $n \in \mathbb{N}$.

On the other hand, there exists a $R>0,$

$$K_{\Omega}\left(\gamma^{n} z_{0}, z_{0}\right)+K_{\Omega}\left(z_{0}, y_{n}\right)-K_{\Omega}\left(\gamma^{n} z_{0}, y_{n}\right) \leq R.$$
Then
$$b\left(\gamma^{n} z_{0}\right)=K_{\Omega}\left(\gamma^{n} z_{0}, y_{n}\right)-K_{\Omega}\left(z_{0}, y_{n}\right) \geq K_{\Omega}\left(\gamma^{n} z_{0}, z_{0}\right)-R \rightarrow \infty,$$
which is a contradiction. It completes the proof that $\gamma^nz_0\rightarrow\xi$ and hence proves (c).

Therefore, the proof of the claim is complete. \hfill$\square$

\vspace{0.3cm} \noindent{\bf Acknowledgement}. The authors
would like to thank Yunhui Wu and Liyou Zhang for their precious advices and for many stimulating discussions. We would also like to thank the referee for a careful reading and valuable comments.

\bibliography{Localization}
\bibliographystyle{plain}{}
\end{document}